\newtheorem{theorem}{Theorem}[section]
\newtheorem{proposition}[theorem]{Proposition}
\newtheorem{remark}[theorem]{Remark}
\newtheorem{definition}[theorem]{Definition}
\newtheorem{lemma}[theorem]{Lemma}
\newtheorem{corollary}[theorem]{Corollary}
\newtheorem{example}[theorem]{Example}
\DeclareMathOperator*{\Res}{Res} 
\newcommand{\LegendreP}[3]{P_{#1}^{#2}{\hspace*{-2mm}\left( #3 \right)}} 
\newcommand{\LegendreQ}[3]{Q_{#1}^{#2}{\hspace*{-2mm}\left(#3\right)}}
\DeclareMathOperator*{\offdiag}{o-diag}
\newcommand{\diverg}{\mathrm{div}}
\DeclareMathOperator{\arcosh}{arcosh} 
\DeclareMathOperator{\arsinh}{arsinh}
\begin{document}

\title{
New generalized Mehler-Fock transformations and applications to the resolvent equation
\footnote{The results of the present work are part of my dissertation \cite{ErenDiss}.}
}

\author{Eren U\c{c}ar}

\newcommand{\Addresses}{{
  \bigskip
  \footnotesize

   \textsc{Institut f\"{u}r Mathematik, Humboldt-Universit\"{a}t zu Berlin, 10099 Berlin, Germany}\par\nopagebreak
  \textit{E-mail address:} \texttt{ucar@math.hu-berlin.de}
  
  }}


\date{\today}

\maketitle

\begin{abstract}
We investigate and solve a special class of integrals involving associated Legendre functions, which can be regarded as generalized Mehler-Fock transformations. Some of the integrals appear naturally when dealing with the heat or resolvent equation of a wedge in the hyperbolic plane. As an application, we derive explicit formulas for the Green's function and the heat kernel of a wedge. 
\end{abstract}

\section{Introduction}

The concept of generalized Mehler-Fock transformation has a long history, and it has important applications to the realm of partial differential equations. It can be used, for example, to solve various boundary value problems of mathematical physics involving wedge or conically shaped boundaries (see \cite{Oberhettinger}, \cite{Rosenthal} and references therein). Moreover, it appears in the spectral decomposition of the Laplacian in the hyperbolic plane for geodesic polar coordinates (see \cite{Terras}). The kernel of the generalized Mehler-Fock transformation is a so-called associated Legendre function, and therefore these functions are important for our purposes.

In order to fix our notation, let us introduce the associated Legendre functions of the first and second kinds and summarize some of their properties. Note that our discussion of the associated Legendre functions will not be complete, and that extensive treatises can be found in the literature on associated Legendre functions (see for example \cite[Chapter $8.7-8.8$]{Gradshteyn}, \cite{Hobson}, \cite{Virchenko}, \cite[ Chapter III]{Erdelyi}, \cite[Chapter $14$]{NIST}, \cite[Chapter $5$]{Olver}, and \cite[Chapter $8$]{Temme}).

\begin{definition}
\label{definition:LegendreDGL}
The \emph{associated Legendre equation} is an ordinary differential equation, defined as
\begin{align}
\label{equation:LegendreDGL}
\left( 1-z^2 \right)\frac{d^2 u}{dz^2} - 2z \frac{d u}{dz} + \left( \nu \left( \nu+1 \right) - \frac{\mu}{1-z^2} \right) u = 0,
\end{align}
with parameters $\nu, \mu \in \mathbb{C}$ and variable $z\in \mathbb{C}\backslash\lbrace{ -1, +1 \rbrace}$. Any solution of \eqref{equation:LegendreDGL} is called an \emph{associated Legendre function}.
\end{definition}

We are interested in two special solutions, namely, the associated Legendre functions of the first and second kinds. These can be defined in several equivalent ways, where we choose to define them via the hypergeometric function.

\begin{definition}
\label{definition:Pochhammer}
For any $z\in\mathbb{C}, k\in\mathbb{N}_0$, the \emph{Pochhammer symbol} is defined as
\begin{align*}
\left( z \right)_{k}:=\begin{cases}
1 , & \text{if $k=0$,}\\
z(z+1)(z+2)\cdots (z+k-1), & \text{if $k\geq 1$.}
\end{cases}
\end{align*}
It is related to the gamma function $\Gamma(z)$ through the equation
\begin{align*}
(z)_k = \frac{\Gamma(z+k)}{\Gamma(z)},
\end{align*}
which is easily established using the identity $\Gamma(z+1)=z\cdot \Gamma(z)$.
\end{definition}

For powers, we always use the standard branch: $z^{\mu}=e^{\mu\log(z)}$ with $\log(z):=\log(\vert z \vert) + i\arg(z)$ for $z\in\mathbb{C}\backslash (-\infty,0],$ with $\arg(z)\in(-\pi,\pi)$. For the following facts we refer to \cite[Section 9 of Chapter 5]{Olver}. 

\begin{definition} 
\label{definition:Hypergeom}
Let $a,b,c \in\mathbb{C}$ such that $c\notin \lbrace 0,-1,-2,... \rbrace$, and let $z\in\mathbb{C}\backslash [1,\infty)$. The \emph{hypergeometric function} $F(a,b;c;z)$ is defined through
\begin{align*}
F(a,b;c;z):=\sum\limits_{k=0}^{\infty}\frac{(a)_k  (b)_k}{(c)_k \text{ } k!} z^k,\quad \text{for }\vert z \vert<1,
\end{align*}
and via analytic continuation for $\vert z \vert \geq 1$ with $z\notin [ 1,\infty)$. 

The function
\begin{align*}
\frac{1}{\Gamma(c)}F(a,b;c;z)
\end{align*}
is an entire function in all three parameters $a,b,c$ and is analytic in $z \in \mathbb{C}\backslash [1,\infty)$. In particular, we think of the above function as analytically extended in $c\in\lbrace 0,-1,-2,... \rbrace$, even though $F(a,b;c;z)$ is formally not defined for those values of $c$.
%
\end{definition}

\begin{definition} Let $z\in\mathbb{C}\backslash (-\infty, 1].$
\label{definition:LegendreFirstSecond}
The \emph{associated Legendre function of the first kind} is defined by
\begin{align}
\label{equation:LegendreFirst}
P_{\nu}^{\mu}(z) := \left( \frac{z+1}{z-1} \right)^{\frac{\mu}{2}}\frac{1}{\Gamma(1-\mu)}F\left( -\nu, \nu + 1; 1-\mu;\frac{1-z}{2} \right),
\end{align}
where $\nu,\mu\in\mathbb{C}$.
The \emph{associated Legendre function of the second kind} is defined by
\begin{align}
\label{equation:LegendreSecond}
Q_{\nu}^{\mu}(z) := \frac{ \Gamma\left( \nu+\mu+1 \right)\sqrt{\pi} \left( z^2-1 \right)^{\frac{\mu}{2}}}{e^{-\mu\pi i} 2^{\nu +1} z^{\nu+\mu+1}\cdot \Gamma\left(\nu + \frac{3}{2}\right)}F\left(\frac{\nu+\mu+2}{2}, \frac{\nu+\mu+1}{2}; \nu+\frac{3}{2};\frac{1}{z^2} \right),
\end{align}
where $\mu,\nu\in\mathbb{C}$ such that $\mu+\nu\notin\lbrace -1,-2,-3,,... \rbrace$.

For $\mu=0$ we use $P_{\nu}(z) := P_{\nu}^{0}(z)$ and $Q_{\nu}(z) := Q_{\nu}^{0}(z)$. These functions are also known as \emph{Legendre functions of the first and second kinds}, respectively.

\end{definition} 
The associated Legendre functions of the first and second kinds are linearly independent solutions of \eqref{equation:LegendreDGL}, and are analytic in the parameters $\mu$ and $\nu$, whenever defined, as well as in the variable $z.$ Definition \ref{definition:LegendreFirstSecond} was first introduced by Hobson, it can be found in his classical treatise \cite{Hobson} and is commonly used in the literature (e.g. in \cite{Gradshteyn}, \cite{Erdelyi}, \cite{Temme}). Some authors, however, define the associated Legendre functions differently, in particular, the function of the second kind. For example, E. Barnes and G. Watson define $Q_{\nu}^{\mu}(z)$ differently in \cite{BarnesLegendre} and \cite{WatsonAsymp}, respectively. Also \cite{Olver}, \cite{NIST} do not prefer to work with $Q_{\nu}^{\mu}(z)$ as defined in \eqref{equation:LegendreSecond}.

The paper is organized as follows. In the next section, we will investigate some integrals involving associated Legendre functions. Our main result is Theorem \ref{theorem:TheoremZentralIntegral}, which can be used to solve several interesting integrals (see Example \ref{example:LegendreIntegral}, Corollary \ref{corollary:Corollary1Schritt2}, and Corollary \ref{corollary:IntegralProduktLegendreFuerGreensFunction}).
These integrals can be rewritten as so-called \emph{generalized Mehler-Fock transformations}. From this point of view we basically compute new such transformations. Despite the fact that the generalized Mehler-Fock transformation has been known for some time (see \cite{Lowndes}, \cite{Sneddon}, \cite{Rosenthal}), apparantly there hardly exist calculated examples compared with other integral transformations (see \cite{Oberhettinger} for a list of known generalized Mehler-Fock transformations).

In section 3, we will use those integrals to deduce new explicit formulas for the Green's function and the heat kernel for a wedge in the hyperbolic plane.
\section{Generalized Mehler-Fock transformations}
\label{section:Legendre}

There are plenty of remarkable relations and identities between the asociated Legendre functions of the first and second kinds (see \cite{Gradshteyn}, \cite{Erdelyi}). For the convenience of the reader we summarize those which will be important for us.

\begin{lemma}
\label{lemma:MagischeFormeln}
The associated Legendre functions of the first and second kinds satisfy the following identities for all allowed values of $\nu, \mu, z,\omega$:
\begin{align}
P_{\nu}^{\mu}(z) &=P_{-\nu-1}^{\mu}(z), \label{equation:ReflectionLegendreP} \\
Q_{\nu}^{\mu}(z) &= e^{2i\mu\pi}\frac{\Gamma\left( \nu+\mu+1 \right)}{\Gamma\left( \nu-\mu+1 \right)}Q_{\nu}^{-\mu}(z), \label{equation:MagischeFormel1}\\
P_{\nu}^{-\mu}(z) &= \frac{\Gamma\left( \nu-\mu+1 \right)}{\Gamma\left( \nu+\mu+1 \right)}\left(P_{\nu}^{\mu}(z) - \frac{2}{\pi}e^{-i\mu\pi}\sin\left( \mu\pi \right) Q_{\nu}^{\mu}(z) \right),\label{equation:MagischeFormel2} \\
Q_{\nu}^{-\mu}(z)Q_{\nu}^{\mu}(\omega) &= Q_{\nu}^{-\mu}(\omega)Q_{\nu}^{\mu}(z), \label{equation:SymmetrieProduktLegendreQ}\\
-2\frac{\sin\left( \mu\pi \right)}{\pi}Q_{\nu}^{-\mu}(z)Q_{\nu}^{\mu}(\omega) &= e^{-i\mu\pi}P_{\nu}^{-\mu}(\omega)Q_{\nu}^{\mu}(z) - e^{i\mu\pi}P_{\nu}^{\mu}(\omega)Q_{\nu}^{-\mu}(z). \label{equation:MagischeFormel3}
\end{align}
\end{lemma}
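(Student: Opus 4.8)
The plan is to prove the five identities of Lemma~\ref{lemma:MagischeFormeln} essentially as a sequence of exercises in manipulating the hypergeometric series representations \eqref{equation:LegendreFirst} and \eqref{equation:LegendreSecond}, together with standard connection and transformation formulas for $F(a,b;c;z)$. Since all functions involved are analytic in $\nu,\mu$ and $z$ wherever defined, it suffices in each case to verify the identity on a convenient open set and then invoke analytic continuation to extend it to all allowed values. I would dispose of the identities roughly in the order \eqref{equation:ReflectionLegendreP}, \eqref{equation:MagischeFormel1}, \eqref{equation:MagischeFormel2}, \eqref{equation:SymmetrieProduktLegendreQ}, \eqref{equation:MagischeFormel3}, since the later symmetry relations are most cleanly deduced from the earlier reflection and connection formulas rather than from scratch.

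For \eqref{equation:ReflectionLegendreP}, the key observation is that the defining formula \eqref{equation:LegendreFirst} depends on $\nu$ only through the product $\nu(\nu+1)$ and the pair $\{-\nu,\nu+1\}$ appearing as the first two hypergeometric parameters. Replacing $\nu$ by $-\nu-1$ sends $-\nu\mapsto\nu+1$ and $\nu+1\mapsto-\nu$, which merely swaps the two upper parameters of $F$; since $F(a,b;c;z)=F(b,a;c;z)$ by symmetry of its series, the prefactor being $\nu$-independent, the equality $P_\nu^\mu(z)=P_{-\nu-1}^\mu(z)$ follows immediately. For \eqref{equation:MagischeFormel1} I would compare the right-hand sides of \eqref{equation:LegendreSecond} written for $\mu$ and for $-\mu$: the gamma prefactors and the exponential $e^{-\mu\pi i}$ change in a controlled way, and the hypergeometric arguments in $1/z^2$ are related by one of Euler's or Pfaff's transformations, so that after collecting the $\Gamma$-factors one recovers exactly the stated ratio $e^{2i\mu\pi}\Gamma(\nu+\mu+1)/\Gamma(\nu-\mu+1)$. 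The connection formula \eqref{equation:MagischeFormel2} expressing $P_\nu^{-\mu}$ in terms of $P_\nu^\mu$ and $Q_\nu^\mu$ is the standard relation between the three Legendre functions; the cleanest route is to use the known linear relation among $P_\nu^\mu(z)$, $P_\nu^{-\mu}(z)$ and $Q_\nu^\mu(z)$ (equivalently, to express $Q_\nu^\mu$ via the difference $P_\nu^{-\mu}-P_\nu^\mu$ using the Wronskian-type connection coefficients) and then solve for $P_\nu^{-\mu}$.

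The last two identities are symmetry statements, and here I would work purely formally from the preceding relations rather than returning to series. For \eqref{equation:SymmetrieProduktLegendreQ}, apply \eqref{equation:MagischeFormel1} to rewrite $Q_\nu^{\mu}(\omega)$ as a gamma-ratio times $Q_\nu^{-\mu}(\omega)$ and likewise $Q_\nu^{\mu}(z)$ in terms of $Q_\nu^{-\mu}(z)$; the gamma-ratio and the exponential $e^{2i\mu\pi}$ are the same on both sides and therefore cancel, leaving the claimed product symmetry $Q_\nu^{-\mu}(z)Q_\nu^{\mu}(\omega)=Q_\nu^{-\mu}(\omega)Q_\nu^{\mu}(z)$. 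For \eqref{equation:MagischeFormel3} I would substitute the connection formula \eqref{equation:MagischeFormel2}, written once with argument $\omega$ for $P_\nu^{-\mu}(\omega)$ and once (after applying \eqref{equation:MagischeFormel1}) to express $P_\nu^{\mu}(\omega)$, into the right-hand side; the $P_\nu^\mu(\omega)Q_\nu^\mu$-type terms should cancel by the symmetry \eqref{equation:SymmetrieProduktLegendreQ}, and the surviving $Q_\nu^{-\mu}(z)Q_\nu^{\mu}(\omega)$ term will carry exactly the factor $-2\sin(\mu\pi)/\pi$. The main obstacle I anticipate is purely bookkeeping: keeping track of the branch of the various powers $(z^2-1)^{\mu/2}$, $((z+1)/(z-1))^{\mu/2}$ and the exponentials $e^{\pm i\mu\pi}$ consistently under the chosen principal branch, since an error of a factor $e^{i\mu\pi}$ is easy to make and is precisely what distinguishes these identities. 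There is no deep conceptual difficulty beyond invoking the right classical transformation of $F$; the delicate part is the careful phase accounting, after which analytic continuation in $\mu$ and $\nu$ removes the restriction to generic values.
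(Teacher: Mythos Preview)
Your approach is correct and, for identities \eqref{equation:SymmetrieProduktLegendreQ} and \eqref{equation:MagischeFormel3}, coincides with the paper's: the paper also obtains \eqref{equation:SymmetrieProduktLegendreQ} by applying \eqref{equation:MagischeFormel1} twice, and derives \eqref{equation:MagischeFormel3} from \eqref{equation:MagischeFormel1} together with \eqref{equation:MagischeFormel2}. The only difference is in how \eqref{equation:ReflectionLegendreP}--\eqref{equation:MagischeFormel2} are treated: the paper simply cites them from Gradshteyn--Ryzhik (formulas 8.731~5, 8.736~4, 8.736~1) without further argument, whereas you propose deriving them directly from the hypergeometric definitions. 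Your sketches are sound --- the symmetry $F(a,b;c;z)=F(b,a;c;z)$ gives \eqref{equation:ReflectionLegendreP} immediately, and Euler's transformation $F(a,b;c;w)=(1-w)^{c-a-b}F(c-a,c-b;c;w)$ applied to the hypergeometric factor in \eqref{equation:LegendreSecond} yields \eqref{equation:MagischeFormel1} after the prefactors are collected --- so your route is more self-contained at the cost of a bit more bookkeeping, while the paper's is shorter but defers to the literature.
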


\begin{proof}
Equation \eqref{equation:ReflectionLegendreP}, \eqref{equation:MagischeFormel1}, and \eqref{equation:MagischeFormel2} are stated in \cite[formulas 8.731 5, 8.736 4, 8.736 1]{Gradshteyn}. Equation \eqref{equation:SymmetrieProduktLegendreQ} follows from \eqref{equation:MagischeFormel1}, when applied twice.

Lastly, \eqref{equation:MagischeFormel3} follows easily from \eqref{equation:MagischeFormel1} and \eqref{equation:MagischeFormel2}. 
\end{proof}

Another remarkable connection between the associated Legendre functions of the first and second kinds is given by \emph{Whipple's formula} (see \cite[formulas (13) and (14) on p. 141]{Erdelyi}).
\begin{lemma}
\label{lemma:Whipple}
For all $z\in\mathbb{C}\backslash (-\infty,1]$ with $\Re(z)>0$ the following relations hold:
\begin{align}
e^{-i\mu\pi}Q_{\nu}^{\mu}(z)&=\sqrt{\frac{\pi}{2}}\Gamma\left( \nu+\mu+1 \right)\frac{1}{\left( z^2-1 \right)^{\frac{1}{4}}}P_{-\mu-\frac{1}{2}}^{-\nu-\frac{1}{2}}\left( \frac{z}{\left( z^2 -1 \right)^{\frac{1}{2}}} \right),\label{equation:Whipple1} \\
P_{\nu}^{\mu}(z)&=ie^{i\nu\pi}\sqrt{\frac{2}{\pi}}\cdot \frac{1}{\Gamma\left(-\nu-\mu\right)}\cdot \frac{1}{\left( z^2-1 \right)^{\frac{1}{4}}}Q_{-\mu-\frac{1}{2}}^{-\nu-\frac{1}{2}}\left( \frac{z}{\left( z^2 -1 \right)^{\frac{1}{2}}} \right). \label{equation:Whipple2}
\end{align}
\end{lemma}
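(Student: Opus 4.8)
The plan is to establish the first relation \eqref{equation:Whipple1} directly and then obtain \eqref{equation:Whipple2} essentially for free, exploiting the fact that the \emph{Whipple map} $\zeta = z/\sqrt{z^2-1}$ is an involution. Indeed $\zeta^2-1 = 1/(z^2-1)$, so that $\sqrt{\zeta^2-1} = 1/\sqrt{z^2-1}$ and $\zeta/\sqrt{\zeta^2-1} = z$ on the relevant principal branch. Hence, once \eqref{equation:Whipple1} is known, I would apply it with $(\nu,\mu)$ replaced by $(-\mu-\tfrac12,-\nu-\tfrac12)$ and $z$ replaced by $\zeta$: the index of $P$ collapses back to $(\nu,\mu)$, the argument returns to $z$, the prefactor $(\zeta^2-1)^{-1/4}$ becomes $(z^2-1)^{1/4}$, and rearranging yields exactly \eqref{equation:Whipple2}, with the phase $ie^{i\nu\pi}=e^{i(\nu+1/2)\pi}$ emerging from $e^{-i(-\nu-1/2)\pi}$. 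The restriction $\Re(z)>0$ is precisely what guarantees that $\zeta$ again lies in $\mathbb{C}\backslash(-\infty,1]$, so that \eqref{equation:Whipple1} may legitimately be reapplied.

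To prove \eqref{equation:Whipple1} itself, I would show that its right-hand side, viewed as a function of $z$, solves the associated Legendre equation \eqref{equation:LegendreDGL} with parameters $(\nu,\mu)$. The heart of this is a gauge-and-substitution computation: if $u(\zeta)$ solves \eqref{equation:LegendreDGL} with parameters $(-\mu-\tfrac12,-\nu-\tfrac12)$, then $v(z):=(z^2-1)^{-1/4}u(\zeta)$ solves \eqref{equation:LegendreDGL} with parameters $(\nu,\mu)$. The parameter swap is consistent at the level of the invariants $\nu(\nu+1)$ and $\mu^2$ entering the equation, since $(-\mu-\tfrac12)(-\mu+\tfrac12)=\mu^2-\tfrac14$ and $(-\nu-\tfrac12)^2=\nu(\nu+1)+\tfrac14$, and the two shifts of $\pm\tfrac14$ are exactly what the conjugation by $(z^2-1)^{-1/4}$ together with the chain rule for $\zeta(z)$ produces. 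Granting this, the right-hand side of \eqref{equation:Whipple1} is a solution and therefore a linear combination $\alpha P_\nu^\mu(z)+\beta Q_\nu^\mu(z)$ of the two independent solutions.

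The constants $\alpha,\beta$ I would pin down by matching asymptotics as $z\to+\infty$, where $\zeta\to 1^+$. Using the defining series \eqref{equation:LegendreFirst}, $P^{-\nu-1/2}_{-\mu-1/2}(\zeta)$ behaves like a constant multiple of $(\zeta-1)^{(2\nu+1)/4}$, and since $\zeta-1\sim 1/(2z^2)$ while $(z^2-1)^{-1/4}\sim z^{-1/2}$, the whole right-hand side decays like $z^{-\nu-1}$. This matches the decay $Q_\nu^\mu(z)\sim\mathrm{const}\cdot z^{-\nu-1}$ read off from \eqref{equation:LegendreSecond}, whereas $P_\nu^\mu(z)$ generically grows like $z^{\nu}$; comparing powers of $z$ forces $\alpha=0$ and leaves $\beta$ determined by the ratio of leading coefficients. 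Collapsing the resulting quotient of $\Gamma(\nu+\tfrac32)$, $\Gamma(\nu+\mu+1)$ and powers of $2$ by means of the duplication formula yields $\beta=e^{-i\mu\pi}$; note that the phase $e^{-i\mu\pi}$ placed on the left of \eqref{equation:Whipple1} is exactly what cancels the definitional factor $e^{-\mu\pi i}$ in \eqref{equation:LegendreSecond}, so that both sides are real and positive for real $z>1$.

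The main obstacle I anticipate is not the ODE symmetry but the branch bookkeeping. Each of $(z^2-1)^{1/4}$, the fractional powers of $\zeta\pm1$ hidden in $P$, and $\zeta=z/\sqrt{z^2-1}$ carries a branch choice, and both the involution step and the coefficient computation are valid only once one checks that all of them are taken on the principal branch consistent with the standing convention $z^\mu=e^{\mu\log z}$; verifying that the hypothesis $\Re(z)>0$ propagates correctly through these choices is the delicate point. As an alternative route avoiding the ODE, one can insert \eqref{equation:LegendreFirst} and \eqref{equation:LegendreSecond} directly and observe that the two hypergeometric functions share the parameter $c=\nu+\tfrac32$ while the parameters on the $Q$-side satisfy $a-b=\tfrac12$; this is precisely the degeneracy admitting a quadratic transformation, which (combined with a Pfaff or Euler transformation to align the arguments $1/z^2$ and $(1-\zeta)/2$) converts one side into the other. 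That approach trades the ODE computation for the same prefactor-and-branch reconciliation, so the essential difficulty is unchanged.
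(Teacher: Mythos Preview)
The paper does not actually prove this lemma: it simply quotes Whipple's formulas from Erd\'elyi \cite[formulas (13) and (14) on p.~141]{Erdelyi} without argument. Your proposal, by contrast, supplies a genuine proof, and the strategy you outline---show that $(z^2-1)^{-1/4}u(\zeta)$ solves the Legendre equation with swapped parameters, identify the correct linear combination by matching the $z\to\infty$ asymptotics, and then recover \eqref{equation:Whipple2} from \eqref{equation:Whipple1} via the involution $z\mapsto\zeta$---is the classical route and is correct. The involution step you describe checks out exactly: applying \eqref{equation:Whipple1} with $(\nu,\mu)\to(-\mu-\tfrac12,-\nu-\tfrac12)$ and $z\to\zeta$ reproduces \eqref{equation:Whipple2} with the phase $ie^{i\nu\pi}=e^{i(\nu+1/2)\pi}$ arising just as you say.

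One small caveat worth making explicit: your asymptotic matching at $z\to\infty$ relies on $P_\nu^\mu(z)$ and $Q_\nu^\mu(z)$ having distinguishable growth rates, which fails on certain parameter loci (e.g.\ when $2\nu+1=0$ or when $P_\nu^\mu$ degenerates). The standard remedy is to carry out the identification for generic $(\nu,\mu)$ and then invoke analyticity in the parameters to extend to all admissible values; your use of ``generically'' suggests you have this in mind, but it should be stated. Apart from that and the branch bookkeeping you already flag, the argument is complete.
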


In order to investigate the integrals later in this section, it is helpful to know the following asymptotic behaviour of the associated Legendre functions of the first and second kinds.

\begin{lemma}
\label{lemma:LegendreAsymp}
Let $a>0$.
\begin{itemize}
\item[$(i)$] For any $\mu\in\mathbb{C}$:
\begin{align} P_{\nu}^{\mu}\left( \cosh(a) \right) = &\frac{\Gamma\left( \nu+1 \right)}{\Gamma\left( \nu-\mu+1 \right)} \cdot \frac{1}{\sqrt{2\pi (\nu+1)\sinh(a)}} \nonumber\\
& \cdot \left( e^{\left( \nu+\frac{1}{2} \right)a} + e^{-\pi i \left( \mu-\frac{1}{2} \right)-\left( \nu+\frac{1}{2} \right)a} \right)\left( 1+O\left( \frac{1}{\vert \nu \vert} \right) \right) \label{equation:LegendreAsymp1}
\end{align}
as $\vert \nu \vert \rightarrow\infty$ with $\Re\left(\nu \right)>-1$.
\item[$(ii)$] For any $\mu\in\mathbb{C}$:
\begin{align}
P_{-\frac{1}{2}+i\rho}^{\mu}\left(\cosh(a)\right) = \rho^{\mu-\frac{1}{2}}\sqrt{\frac{2}{\pi\sinh(a)}}\cos\left( a\rho+\frac{\pi}{4}\left( 2\mu-1 \right) \right)\left( 1+O\left(\frac{1}{\rho}\right)\right) \label{equation:LegendreAsymp2}
\end{align}
as $\rho\rightarrow\infty$ with $\rho\in\mathbb{R}$.
\item[$(iii)$] For any $\mu\in\mathbb{C}$ and $\delta\in (0,\pi)$:
\begin{align}
Q_{\nu}^{\mu}\left(\cosh(a)\right) = &\sqrt{\frac{\pi}{2\sinh(a)}}\nu^{\mu-\frac{1}{2}}e^{i\mu\pi-a\left( \nu+\frac{1}{2} \right)}\left(1+O\left(\frac{1}{\vert \nu \vert}\right)\right) \label{equation:LegendreAsymp3}
\end{align}
as $ \vert \nu \vert \rightarrow\infty$ with $\vert \arg(\nu) \vert<\pi-\delta $.
\end{itemize}
\end{lemma}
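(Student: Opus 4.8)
The plan is to read the three estimates as large-parameter asymptotics and to notice first that part $(ii)$ is merely the specialization of part $(i)$ to the critical line. Setting $\nu=-\frac12+i\rho$ in \eqref{equation:LegendreAsymp1} turns the two exponentials into $e^{\pm i\rho a}$ and collapses them into a cosine,
\begin{equation*}
e^{(\nu+\frac12)a}+e^{-\pi i(\mu-\frac12)-(\nu+\frac12)a}=2e^{-\frac{\pi i}{2}(\mu-\frac12)}\cos\Big(a\rho+\tfrac{\pi}{4}(2\mu-1)\Big),
\end{equation*}
while Stirling's formula gives $\Gamma(\nu+1)/\Gamma(\nu-\mu+1)\sim(i\rho)^{\mu}$ and $\sqrt{\nu+1}\sim(i\rho)^{1/2}$. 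The surviving powers of $i$ cancel against $e^{-\frac{\pi i}{2}(\mu-\frac12)}$, the remainder stays $O(1/|\nu|)=O(1/\rho)$, and \eqref{equation:LegendreAsymp2} drops out. So it suffices to prove $(i)$ and $(iii)$.

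For $(iii)$ I would use a Laplace-type integral representation
\begin{equation*}
Q_{\nu}^{\mu}(\cosh a)=e^{i\mu\pi}\frac{\Gamma(\nu+\mu+1)}{\Gamma(\nu+1)}\int_{0}^{\infty}\big(\cosh a+\sinh a\cosh t\big)^{-\nu-1}\cosh(\mu t)\,dt,
\end{equation*}
valid for $\Re(\nu+1)>|\Re\mu|$. The phase $\cosh a+\sinh a\cosh t$ is minimal at the endpoint $t=0$, where it equals $e^{a}$, so for large $\nu$ the mass concentrates there; inserting $\cosh a+\sinh a\cosh t=e^{a}(1+\tfrac{\sinh a}{2e^{a}}t^{2}+\dots)$ and applying Watson's lemma produces $\tfrac12\sqrt{2\pi e^{a}/((\nu+1)\sinh a)}\,e^{-(\nu+1)a}$, which together with $\Gamma(\nu+\mu+1)/\Gamma(\nu+1)\sim\nu^{\mu}$ reproduces \eqref{equation:LegendreAsymp3}. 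The only genuine technical point is the sector $|\arg\nu|<\pi-\delta$: the integral converges only for large $\Re\nu$, so when $\arg\nu$ approaches $\pm\pi$ I would first rotate the contour $t\mapsto te^{i\theta}$, which leaves the endpoint contribution unchanged and yields uniformity in the sector. Since $Q_{\nu}^{\mu}$ is entire in $\mu$, I would argue first for small real $\mu$ and extend by analyticity, keeping track of the uniformity of the remainder.

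For $(i)$ the new difficulty is that \eqref{equation:LegendreAsymp1} carries two exponentials. I would obtain both from a single connection formula of the shape
\begin{equation*}
P_{\nu}^{\mu}(\cosh a)=A(\nu,\mu)\,Q_{\nu}^{\mu}(\cosh a)+B(\nu,\mu)\,Q_{-\nu-1}^{\mu}(\cosh a),
\end{equation*}
which follows from the hypergeometric definitions \eqref{equation:LegendreFirst}, \eqref{equation:LegendreSecond} together with the standard connection relations for $F$ between its singular points, and whose coefficients $A,B$ are explicit ratios of gamma functions. Feeding the already-established \eqref{equation:LegendreAsymp3} into both terms, the recessive solution $Q_{\nu}^{\mu}$ supplies $e^{-(\nu+\frac12)a}$ and the dominant solution $Q_{-\nu-1}^{\mu}$ supplies $e^{+(\nu+\frac12)a}$; the relative phase $e^{-\pi i(\mu-\frac12)}$ between the two exponentials then emerges from combining the factors $e^{i\mu\pi}$ and the branches $\nu^{\mu-\frac12}$, $(-\nu-1)^{\mu-\frac12}$ in \eqref{equation:LegendreAsymp3} with the coefficients $A,B$.

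The main obstacle is precisely this recessive term, and with it the sector restriction in \eqref{equation:LegendreAsymp3}: whereas $Q_{\nu}^{\mu}$ stays inside $|\arg\nu|<\pi-\delta$ throughout $\Re\nu>-1$, its reflection $Q_{-\nu-1}^{\mu}$ leaves the sector when $\Re\nu\to+\infty$. The saving observation is that in that regime the dominant exponential $e^{(\nu+\frac12)a}$ already swamps everything and the $Q_{-\nu-1}^{\mu}$–contribution can be controlled by the elementary Laplace estimate of the hypergeometric function in \eqref{equation:LegendreFirst} instead, whereas exactly where the recessive term is not negligible — namely for $\Re\nu$ bounded, i.e.\ $\nu$ near the imaginary axis — the argument $-\nu-1$ does lie in the admissible sector, so \eqref{equation:LegendreAsymp3} applies to both pieces. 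Matching the two regimes and collecting constants then gives \eqref{equation:LegendreAsymp1}. Finally I would use the reflection formula \eqref{equation:ReflectionLegendreP}, under which \eqref{equation:LegendreAsymp1} must be invariant (it merely interchanges the two exponentials), and the identity \eqref{equation:MagischeFormel2} relating $P_{\nu}^{-\mu}$ to $P_{\nu}^{\mu}$ and $Q_{\nu}^{\mu}$, as consistency checks and to pass between $\pm\mu$, before extending to all $\mu\in\mathbb{C}$ by analytic continuation. I expect the constant-chasing and the uniform control of the remainders to be laborious but routine; the single delicate step is the correct determination of the recessive exponential and its phase in $(i)$.
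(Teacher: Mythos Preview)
The paper does not actually prove this lemma: its ``proof'' consists entirely of citations. Parts $(i)$ and $(iii)$ are quoted from Virchenko's book (with $(i)$ traced back to G\"otze), and $(ii)$ is said to follow from $(i)$ with the details deferred to the author's dissertation. So your proposal is not a comparison against an argument but against a literature reference.

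That said, your derivation of $(ii)$ from $(i)$ is exactly what the paper claims (without giving the computation), and your arithmetic is correct: the phase $e^{-\frac{\pi i}{2}(\mu-\frac12)}$ cancels against $(i\rho)^{\mu-\frac12}=\rho^{\mu-\frac12}e^{\frac{\pi i}{2}(\mu-\frac12)}$, leaving the real prefactor $\rho^{\mu-\frac12}\sqrt{2/(\pi\sinh a)}$ and the cosine. This part of your proposal is both correct and aligned with the paper.

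Your sketches for $(i)$ and $(iii)$ go beyond what the paper does. The Watson's-lemma argument for $(iii)$ from the integral representation is standard and would work; the contour rotation to reach the full sector $|\arg\nu|<\pi-\delta$ is indeed the right device. For $(i)$ your plan via the connection formula $P_{\nu}^{\mu}=A\,Q_{\nu}^{\mu}+B\,Q_{-\nu-1}^{\mu}$ is natural, and you have correctly identified the real difficulty: the asymptotic \eqref{equation:LegendreAsymp3} for $Q_{-\nu-1}^{\mu}$ is not directly available when $\Re\nu\to+\infty$, so a separate estimate is needed there. Your patching argument (dominant exponential controls everything when $\Re\nu$ is large; both $Q$-asymptotics apply when $\Re\nu$ is bounded) is plausible but would need care to make uniform in the transition zone. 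Since the paper itself outsources this step to G\"otze/Virchenko, there is no conflict --- you are simply attempting what the paper declines to carry out.
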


\begin{proof}
All three asymptotic estimates are stated at the beginning of $\S 8$ in \cite{Virchenko}.

A proof of \eqref{equation:LegendreAsymp1} can be found in \cite{Goetze}, which is also referred to in \cite{Virchenko}. At this point we want to remark that both sources \cite{Virchenko} and \cite{Goetze} state more general results. They deal with asymptotic estimates for so-called \emph{generalized associated Legendre functions} of \emph{first} and \emph{second kinds}, denoted by $P_{\nu}^{n, m}(z)$ and $Q_{\nu}^{n, m}(z)$ respectively. For $n=m=\mu$ these functions reduce to $P_{\nu}^{\mu}(z)$, respectively $Q_{\nu}^{\mu}(z)$.

Equation \eqref{equation:LegendreAsymp2} can be deduced from \eqref{equation:LegendreAsymp1} (see \cite[Lemma 2.19]{ErenDiss} for a proof). Equation \eqref{equation:LegendreAsymp3} is proven in Lemma $2$ of \S 8 in \cite{Virchenko}.

\end{proof}

\begin{lemma}
\label{lemma:LegendreProdAsymp}
Let $\nu\in \mathbb{C}$ and $z,\omega\in (1,\infty)$ be given. Further, let $\tilde{z},\tilde{\omega}\in (0,\infty)$ be such that 
\begin{align}
\label{equation:TildeDefinitionen}
\cosh(\tilde{z})=\frac{z}{\left( z^2-1\right)^{\nicefrac{1}{2}}}, \quad \cosh(\tilde{\omega})=\frac{\omega}{\left( \omega^2-1\right)^{\nicefrac{1}{2}}}.
\end{align}
Then
\begin{itemize}
\item[$1)$]
\begin{align} 
e^{-i\pi\mu} P_{\nu}^{-\mu}\left( \omega \right)Q_{\nu}^{\mu}\left( z \right) = \frac{e^{- \tilde{\omega}\mu}}{2\mu } \left( e^{\mu\tilde{z}}+e^{i\pi(\nu+1)}e^{-\mu\tilde{z}} \right) \left( 1+O\left(\frac{1}{\vert \mu \vert}\right) \right) \label{equation:AsympLegendreProdKond1}
\end{align}
as $\vert \mu \vert\rightarrow\infty$ with $\Re(\mu)>-\frac{1}{2}$.
\item[$2)$] 
\begin{align}
\frac{\sin(i\pi\rho)}{\pi} Q_{\nu}^{-i\rho}\left( z \right)Q_{\nu}^{i\rho}\left( \omega \right) = &\frac{i}{ \rho} \cos\left(\tilde{z}\rho-\frac{\pi}{2}\left( \nu+1 \right)\right)\cdot \nonumber\\
&\cdot \cos\left(\tilde{\omega}\rho-\frac{\pi}{2}\left( \nu+1 \right)\right)\left( 1+O\left( \frac{1}{\rho} \right) \right) \label{equation:AsympLegendreProdKond2}
\end{align}
as $\rho\rightarrow\infty$ with $\rho\in \mathbb{R}$.
\end{itemize}
\end{lemma}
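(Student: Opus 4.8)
The plan is to use Whipple's formula (Lemma~\ref{lemma:Whipple}) to trade the large order (the index $\mu$, respectively $i\rho$) for a large \emph{degree}, thereby reducing both statements to the degree-asymptotics already collected in Lemma~\ref{lemma:LegendreAsymp}. Observe first that the arguments in \eqref{equation:TildeDefinitionen} are exactly those appearing on the right-hand sides of \eqref{equation:Whipple1} and \eqref{equation:Whipple2}, and that $\sinh^2(\tilde z)=\cosh^2(\tilde z)-1=(z^2-1)^{-1}$, so that $(z^2-1)^{-1/4}=\sqrt{\sinh(\tilde z)}$ and likewise for $\omega$. This identity is what will ultimately cancel all the square-root prefactors produced by the asymptotics.

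For part $1)$, I would apply \eqref{equation:Whipple1} to $Q_\nu^\mu(z)$ and \eqref{equation:Whipple2} (with $\mu$ replaced by $-\mu$ and $z$ by $\omega$) to $P_\nu^{-\mu}(\omega)$. After multiplying, the constants $\sqrt{\pi/2}$ and $\sqrt{2/\pi}$ cancel and one is left with
\[
e^{-i\pi\mu}P_\nu^{-\mu}(\omega)Q_\nu^\mu(z)=ie^{i\nu\pi}\frac{\Gamma(\nu+\mu+1)}{\Gamma(\mu-\nu)}\,\frac{P_{-\mu-1/2}^{-\nu-1/2}(\cosh\tilde z)\,Q_{\mu-1/2}^{-\nu-1/2}(\cosh\tilde\omega)}{(z^2-1)^{1/4}(\omega^2-1)^{1/4}}.
\]
Now the large parameter $\mu$ sits in the degree. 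Using the reflection \eqref{equation:ReflectionLegendreP} to rewrite $P_{-\mu-1/2}^{-\nu-1/2}=P_{\mu-1/2}^{-\nu-1/2}$, both factors acquire degree $\mu-\tfrac12$ with $\Re(\mu-\tfrac12)>-1$, so I may insert \eqref{equation:LegendreAsymp1} and \eqref{equation:LegendreAsymp3}. The Gamma factors generated by \eqref{equation:LegendreAsymp1} combine with the prefactor to $\Gamma(\mu+\tfrac12)/\Gamma(\mu-\nu)\sim\mu^{\nu+1/2}$; the powers of $\mu$ from the three sources collapse to $\mu^{-1}$, the phases $ie^{i\nu\pi}e^{-i\pi(\nu+1/2)}$ collapse to $1$, and the square-root factors cancel against $(z^2-1)^{-1/4}(\omega^2-1)^{-1/4}$ by the identity above. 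What survives is precisely the right-hand side of \eqref{equation:AsympLegendreProdKond1}.

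For part $2)$, I would apply \eqref{equation:Whipple1} to both $Q_\nu^{i\rho}(\omega)$ and $Q_\nu^{-i\rho}(z)$; the two exponential prefactors $e^{\mp\pi\rho}$ cancel, leaving
\[
Q_\nu^{-i\rho}(z)Q_\nu^{i\rho}(\omega)=\frac{\pi}{2}\,\Gamma(\nu+1-i\rho)\Gamma(\nu+1+i\rho)\,\frac{P_{i\rho-1/2}^{-\nu-1/2}(\cosh\tilde z)\,P_{-i\rho-1/2}^{-\nu-1/2}(\cosh\tilde\omega)}{(z^2-1)^{1/4}(\omega^2-1)^{1/4}}.
\]
Using \eqref{equation:ReflectionLegendreP} again, $P_{-i\rho-1/2}^{-\nu-1/2}=P_{i\rho-1/2}^{-\nu-1/2}$, so both degrees are of the form $-\tfrac12+i\rho$ with $\rho\to+\infty$, and \eqref{equation:LegendreAsymp2} yields the two cosines with the phase $-\tfrac{\pi}{2}(\nu+1)$ together with a factor $\rho^{-2\nu-2}$. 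The square-root prefactors again cancel by the identity, and the whole statement reduces to showing that
\[
\frac{\sin(i\pi\rho)}{\pi}\,\Gamma(\nu+1-i\rho)\Gamma(\nu+1+i\rho)\,\rho^{-2\nu-2}\sim\frac{i}{\rho}.
\]

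The main obstacle is this last asymptotic, i.e.\ the behaviour of the Gamma product along the vertical line, which has no counterpart in Lemma~\ref{lemma:LegendreAsymp}. I would obtain it from Stirling's formula, which gives $\Gamma(\nu+1+i\rho)\Gamma(\nu+1-i\rho)\sim2\pi\,\rho^{2\nu+1}e^{-\pi\rho}$; combined with $\sin(i\pi\rho)=i\sinh(\pi\rho)\sim\tfrac{i}{2}e^{\pi\rho}$ the factors $e^{\pm\pi\rho}$ cancel and the complex powers $\rho^{2\nu+1}$ and $\rho^{-2\nu-2}$ collapse to $\rho^{-1}$, producing $i/\rho$ as required. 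Throughout, the only bookkeeping care needed is that the error terms $(1+O(1/|\mu|))$, respectively $(1+O(1/\rho))$, multiply cleanly, and that the hypotheses $\Re(\mu)>-\tfrac12$ (so that $\Re(\mu-\tfrac12)>-1$ and $|\arg(\mu-\tfrac12)|$ stays bounded away from $\pi$) and $\rho\in\mathbb{R}$, $\rho\to+\infty$, keep the transformed degree inside the validity region of Lemma~\ref{lemma:LegendreAsymp}.
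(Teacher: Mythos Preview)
Your proposal is correct and follows essentially the same route as the paper's proof: Whipple's formulas \eqref{equation:Whipple1}--\eqref{equation:Whipple2} together with the reflection \eqref{equation:ReflectionLegendreP} to shift the large parameter from the order to the degree, then the degree asymptotics \eqref{equation:LegendreAsymp1}--\eqref{equation:LegendreAsymp3}, the Gamma-quotient asymptotic \eqref{equation:GammaQuotAsymp} for part~1), and Stirling's formula for the product $\Gamma(\nu+1+i\rho)\Gamma(\nu+1-i\rho)$ in part~2). The cancellation of the $(z^2-1)^{-1/4}$ factors against $\sqrt{\sinh(\tilde z)}$ and the collapse of the phase factors are handled in the paper exactly as you outline.
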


\begin{proof}
Since $\cosh^2(s)-\sinh^2(s)=1$ for all $s\in\mathbb{C}$, it follows that
\begin{align}
\label{equation:BeziehungTilde}
\frac{1}{\sqrt{\sinh(\tilde{z})}}=\left( z^2-1 \right)^{\nicefrac{1}{4}}, \quad \frac{1}{\sqrt{\sinh(\tilde{\omega})}}=\left( \omega^2-1 \right)^{\nicefrac{1}{4}}.
\end{align}
Let us consider situation $1)$ first.

When we use Whipple's formulas \eqref{equation:Whipple1}, \eqref{equation:Whipple2} and then \eqref{equation:ReflectionLegendreP}, we get
\begin{align*}
 e^{-i\pi \mu}&P_{\nu}^{-\mu}\left( \omega \right)  Q_{\nu}^{\mu}\left( z \right) \\
&=\frac{\Gamma\left( \mu+\nu+1 \right)}{\Gamma\left( \mu-\nu \right)}\frac{ie^{i\nu\pi}}{\left( z^2-1 \right)^{\nicefrac{1}{4}}\left( \omega^2-1 \right)^{\nicefrac{1}{4}}} P_{\mu-\frac{1}{2}}^{-\nu-\frac{1}{2}}\left( \cosh(\tilde{z}) \right) Q_{\mu-\frac{1}{2}}^{-\nu-\frac{1}{2}}\left( \cosh(\tilde{\omega}) \right).
\end{align*}
We can now apply the asymptotic estimates \eqref{equation:LegendreAsymp1}, \eqref{equation:LegendreAsymp3} to the right-hand side of the above equation. When we do so and then use the relation \eqref{equation:BeziehungTilde}, we obtain for $\vert \mu \vert\rightarrow\infty$ with $\Re(\mu)>-\frac{1}{2}$:
\begin{align}
e^{-i\pi \mu}&P_{\nu}^{-\mu}\left( \omega \right) Q_{\nu}^{\mu}\left( z \right)\nonumber \\
&=\frac{\Gamma\left( \mu+\frac{1}{2} \right)}{\Gamma\left( \mu-\nu\right)} \frac{e^{-\tilde{\omega}\mu} \mu^{-\nu-\frac{3}{2}}}{2} \left( e^{\mu\tilde{z}}+e^{i\pi(\nu+1)}e^{-\mu\tilde{z}} \right) \left( 1+O\left(\frac{1}{\vert \mu \vert}\right) \right). \label{equation:AsymptProd9}
\end{align}
Now use the following well-known asymptotic estimate for the quotient of two gamma functions (see \cite[formula (12)]{Tricomi} or \cite[formula (11) on p. 33]{Luke1}):
\begin{align}
\frac{\Gamma\left( z+\alpha \right)}{\Gamma\left( z+\beta \right)} = z^{\alpha-\beta}\left(1 + O\left( \frac{1}{\vert z\vert}\right) \right) \label{equation:GammaQuotAsymp}
\end{align}  
as $z\rightarrow\infty$ with $\vert arg(z) \vert\leq \pi-\epsilon$ for some $\epsilon>0$.

Applying the asymptotic estimate \eqref{equation:GammaQuotAsymp}, we get
\begin{align}
\frac{\Gamma\left( \mu+\frac{1}{2} \right)}{\Gamma\left( \mu-\nu\right)} = \mu^{\nu+\frac{1}{2}}\left( 1+O\left( \frac{1}{\vert \mu \vert} \right) \right) \label{equation:GammaQuotSpez}
\end{align}
as $\mu\rightarrow\infty$ with $\Re(\mu)>-\frac{1}{2}$. Lastly, when we combine \eqref{equation:GammaQuotSpez} and \eqref{equation:AsymptProd9}, we obtain
\begin{align*}
e^{-i\pi \mu}P_{\nu}^{-\mu}\left( \omega \right)Q_{\nu}^{\mu}\left( z \right) = \frac{e^{-\tilde{\omega}\mu}}{2 \mu } \left( e^{\mu\tilde{z}}+e^{i\pi(\nu+1)}e^{-\mu\tilde{z}} \right) \left( 1+O\left(\frac{1}{\vert \mu  \vert}\right) \right),
\end{align*}
as $\mu\rightarrow\infty$ with $\Re(\mu)>-\frac{1}{2}$. This shows $1)$. 

For $2)$, we first use Whipple's formula \eqref{equation:Whipple1} twice and \eqref{equation:ReflectionLegendreP} once, to obtain
\begin{align}
Q_{\nu}^{-i\rho}(z)Q_{\nu}^{i\rho}(\omega) =\, &\frac{\pi}{2}\Gamma\left( i\rho+\nu+1 \right)\Gamma\left( -i\rho+\nu+1 \right)\frac{1}{\left(z^2-1\right)^{\nicefrac{1}{4}}\left(\omega^2-1\right)^{\nicefrac{1}{4}}}\cdot  \nonumber \\
& \cdot P_{-\frac{1}{2}+i\rho}^{-\nu-\frac{1}{2}}\left( \cosh(\tilde{z})  \right)P_{-\frac{1}{2}+i\rho}^{-\nu-\frac{1}{2}}\left( \cosh(\tilde{\omega}) \right). \label{equation:ConditionsSchritt1.1}
\end{align}
It is known (see e.g. \cite{Lebedev} on p. 15) that for any $\varepsilon \in (0,\pi)$ and $\alpha\in\mathbb{C}$,
\begin{align}
\label{equation:AsymptotikGamma}
\Gamma\left( z+\alpha \right) = e^{\left( z+\alpha-\frac{1}{2} \right)\log(z)-z+\frac{1}{2}\log(2\pi)}\left(1+O\left(\frac{1}{\vert z \vert}\right)\right),
\end{align}
as $\vert z \vert\rightarrow\infty$ with $\vert \arg(z) \vert<\pi-\varepsilon$.

Hence we obtain that, for $\rho\rightarrow\infty$ with $\rho\in \mathbb{R}$,
\begin{align*}
\Gamma\left(\pm i\rho+\nu+1\right) &= e^{\left( \pm i\rho+\nu+\frac{1}{2} \right)\log(\pm i\rho)}e^{\mp i\rho}\cdot \sqrt{2\pi}\left(1+O\left(\frac{1}{ \rho }\right)\right),
\end{align*}
and therefore
\begin{align}
\label{equation:ProduktGamma}
\Gamma\left( i\rho+\nu+1\right) \Gamma\left( -i\rho+\nu+1\right) = \rho^{2\nu+1} e^{-\pi\rho}\cdot 2\pi \left(1+O\left(\frac{1}{ \rho }\right)\right).
\end{align}

On the other hand, when we use \eqref{equation:LegendreAsymp2} and \eqref{equation:BeziehungTilde}, we get
\begin{align}
&\frac{\pi}{2}\frac{1}{\left(z^2-1\right)^{\nicefrac{1}{4}}\left(\omega^2-1\right)^{\nicefrac{1}{4}}}\cdot 
P_{-\frac{1}{2}+i\rho}^{-\nu-\frac{1}{2}}\left( \cosh(\tilde{z}) \right)P_{-\frac{1}{2}+i\rho}^{-\nu-\frac{1}{2}}\left( \cosh(\tilde{\omega})\right) \nonumber \\
&= \rho^{-2\nu-2}\cos\left( \tilde{z}\rho-\frac{\pi}{2}\left( \nu+1 \right) \right)\cos\left( \tilde{\omega}\rho-\frac{\pi}{2}\left( \nu+1 \right) \right)\left( 1+O\left( \frac{1}{\rho} \right) \right) \label{equation:ConditionsBeweis2}
\end{align}
as $\rho\rightarrow\infty$ with $\rho \in \mathbb{R}$. From \eqref{equation:ConditionsBeweis2}, \eqref{equation:ProduktGamma} and \eqref{equation:ConditionsSchritt1.1} we obtain
\begin{align*}
Q_{\nu}^{-i\rho}(z)Q_{\nu}^{i\rho}(\omega) = \frac{2\pi}{\rho} e^{-\pi \rho} \cos\left( \tilde{z}\rho-\frac{\pi}{2}\left( \nu+1 \right) \right)\cdot  \cos\left( \tilde{\omega}\rho-\frac{\pi}{2}\left( \nu+1 \right) \right)\left( 1+O\left( \frac{1}{\rho} \right) \right)
\end{align*}
as $\rho\rightarrow\infty$ with $\rho\in \mathbb{R}$. Hence, as $\rho\rightarrow\infty$ in $\mathbb{R}$:
\begin{align*}
\frac{\sin(\pi i \rho)}{\pi} Q_{\nu}^{-i\rho}(z)Q_{\nu}^{i\rho}(\omega)   =\frac{i}{\rho} \cos\left( \tilde{z}\rho-\frac{\pi}{2}\left( \nu+1 \right) \right)\cos\left( \tilde{\omega}\rho-\frac{\pi}{2}\left( \nu+1 \right) \right)\left( 1+O\left( \frac{1}{\rho} \right) \right). 
\end{align*}
\end{proof}

In the last part of this section we want to discuss a class of integrals involving associated Legendre functions of the second kind. We start with a rather technical lemma.
\begin{lemma}
\label{lemma:Schritt1}
Let $\nu\in\mathbb{C}$ with $\Re(\nu)>-1$ and $z,\omega\in\mathbb{C}\backslash\left(-\infty, 1 \right]$. Suppose further that we are given the following situation:
\begin{itemize}
\item[$(i)$] Let $g$ be a meromorphic function on $\mathbb{C}$ and let $S\subset \mathbb{C}$ denote the set of all poles of $g$. Suppose that $g$ is complex differentiable at all points on the imaginary axis, possibly except for a simple pole at the origin, and that $g$ is an odd function, i.e. $g(-s)=-g(s)$ for all $s\in\mathbb{C}\backslash S$.
\item[$(ii)$] Fix some $\varepsilon\in (0,1)$ such that $0<2\varepsilon<  1+\Re(\nu) $ and $B_{2\varepsilon}(0)\cap S\backslash\lbrace{ 0\rbrace}=\emptyset$, where $B_{2\epsilon}(0)$ denotes the open disc of radius $2\varepsilon$ centered at the origin. 
\item[$(iii)$] Let $N:=\left(N_k \right)_{k\in\mathbb{N}}\subset \left[1,\infty \right)$ be an unbounded and monotonically increasing sequence.
\end{itemize}  

Then there exists a function $\varphi:\mathbb{N}\rightarrow \mathbb{C}$ with $\lim_{k\rightarrow\infty}\varphi(k)=0$ and such that for any $k\in\mathbb{N}$ the following holds:
If $C_{k}$ denotes any injective and piecewise differentiable curve from $-iN_k$ to $iN_k$ such that all other points on the curve lie in the domain $\lbrace z\in\mathbb{C}: \Re(z)>0 \rbrace\backslash S$ \emph{(}see Fig. $\ref{Skizze:LemmaSchritt1}$\emph{)}, then we have:
\begin{align}
\label{equation:Schritt1}
2\left( \sum\limits_{p\in S\left( C_{k} \right)} \emph{Res}\left( g;p \right) e^{-i\pi p} P_{\nu}^{-p}(\omega)Q_{\nu}^{p}(z) \right)& + \emph{Res}\left( g;0 \right) P_{\nu}( \omega )Q_{\nu}(z) \nonumber \\
=\frac{2}{\pi}\int\limits_{\frac{\varepsilon}{N_k}}^{N_k} \frac{\sin\left(\pi i\rho \right)}{\pi}g(i\rho)Q_{\nu}^{-i\rho}(z)  Q_{\nu}^{i\rho}(\omega ) d\rho &+ \frac{1}{\pi i}\int\limits_{C_{k}}g(s)e^{-i\pi s}P_{\nu}^{-s}( \omega )  Q_{\nu}^{s}(z) ds  \nonumber \\
&  + \varphi(k),
\end{align}
where $S\left( C_{k} \right)$ denotes the set of all poles of $g$ contained in the bounded domain which is enclosed by the imaginary axis and the curve $C_{k}$.
\end{lemma}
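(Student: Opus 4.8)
The plan is to apply the residue theorem to the meromorphic integrand
$$f(s) := g(s)\,e^{-i\pi s}\,P_{\nu}^{-s}(\omega)\,Q_{\nu}^{s}(z)$$
on the bounded region enclosed by the segment of the imaginary axis from $-iN_k$ to $iN_k$ and the curve $C_{k}$. Because $\Re(\nu)>-1$, the function $Q_{\nu}^{s}(z)$ is analytic in $s$ throughout the closed right half-plane (its order-parameter poles, coming from $\Gamma(\nu+s+1)$, sit at $s=-\nu-n$ with $n\ge 1$ and hence have negative real part), while $P_{\nu}^{-s}(\omega)$ is entire in $s$ by Definition \ref{definition:LegendreFirstSecond} together with Definition \ref{definition:Hypergeom}. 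Consequently the only poles of $f$ in $\{\Re(s)\ge 0\}$ are those of $g$, and by hypothesis no pole of $g$ lies on $C_{k}$ or on the imaginary axis apart from a possible simple pole at the origin. At a pole $p$ of $g$ one has $\operatorname{Res}(f;p)=\operatorname{Res}(g;p)\,e^{-i\pi p}P_{\nu}^{-p}(\omega)Q_{\nu}^{p}(z)$, which is precisely the summand on the left-hand side of \eqref{equation:Schritt1}. The origin I would avoid by indenting the imaginary axis with a clockwise semicircle of radius $\varepsilon/N_k$ bulging into $\{\Re(s)>0\}$, so that $0$ is excluded from the enclosed region; the residue theorem then reads $\int_{C_{k}}f\,ds+\int_{I_{k}}f\,ds=2\pi i\sum_{p\in S(C_{k})}\operatorname{Res}(f;p)$, where $I_{k}$ is the indented imaginary-axis path run from $iN_k$ down to $-iN_k$. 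Dividing by $\pi i$ already produces the coefficient $2$ and the $C_{k}$-integral in \eqref{equation:Schritt1}.

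The core of the argument is to convert the two straight pieces of $I_{k}$ into the $Q_{\nu}^{-i\rho}Q_{\nu}^{i\rho}$ integral on $[\varepsilon/N_k,N_k]$. Parametrizing $s=i\rho$ and folding the negative part onto the positive one via $\rho\mapsto-\rho$, the oddness $g(-i\rho)=-g(i\rho)$ yields
$$f(i\rho)+f(-i\rho)=g(i\rho)\Big(e^{\pi\rho}P_{\nu}^{-i\rho}(\omega)Q_{\nu}^{i\rho}(z)-e^{-\pi\rho}P_{\nu}^{i\rho}(\omega)Q_{\nu}^{-i\rho}(z)\Big).$$
Applying the connection formula \eqref{equation:MagischeFormel3} with $\mu=i\rho$, so that $e^{-i\mu\pi}=e^{\pi\rho}$ and $e^{i\mu\pi}=e^{-\pi\rho}$, collapses the bracket to $-\tfrac{2}{\pi}\sin(i\rho\pi)\,Q_{\nu}^{-i\rho}(z)Q_{\nu}^{i\rho}(\omega)$. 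Taking into account the downward orientation of $I_{k}$ and dividing by $\pi i$, the two straight pieces contribute exactly $\tfrac{2}{\pi}\int_{\varepsilon/N_k}^{N_k}\tfrac{\sin(\pi i\rho)}{\pi}g(i\rho)Q_{\nu}^{-i\rho}(z)Q_{\nu}^{i\rho}(\omega)\,d\rho$, the first integral on the right-hand side of \eqref{equation:Schritt1}.

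It remains to treat the indentation and to isolate the error. Since $f$ has a simple pole at $0$ with $\operatorname{Res}(f;0)=\operatorname{Res}(g;0)P_{\nu}(\omega)Q_{\nu}(z)$, I would write $f=\operatorname{Res}(f;0)/s+h$ with $h$ holomorphic on $B_{2\varepsilon}(0)$ (this is exactly where hypothesis $(ii)$ is used, to guarantee no further pole in that disc). The principal part integrates over the clockwise semicircle to the exact value $-\pi i\operatorname{Res}(f;0)$, independently of the radius, and moving $\operatorname{Res}(f;0)$ to the left accounts for the coefficient $1$ — rather than $2$ — attached to the $\operatorname{Res}(g;0)$-term. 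Setting $\varphi(k):=\tfrac{1}{\pi i}\int_{\mathrm{semicircle}}h\,ds$ completes the identity, and the decisive point to check is that $\varphi$ depends on $k$ alone and tends to $0$. This is the main (though not severe) obstacle: it holds because the semicircle and the function $h$ near the origin are entirely independent of the choice of $C_{k}$, so that the estimate $|\varphi(k)|\le N_k^{-1}\varepsilon\,\sup_{\overline{B_\varepsilon(0)}}|h|\to 0$ follows from $N_k\to\infty$. Note that all integrals here run over bounded contours, so no asymptotic estimates are needed for this lemma; the results of Lemma \ref{lemma:LegendreAsymp} and Lemma \ref{lemma:LegendreProdAsymp} enter only later, when one lets $k\to\infty$.
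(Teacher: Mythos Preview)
Your argument is correct and follows the same overall strategy as the paper: apply the residue theorem to $f(s)=g(s)e^{-i\pi s}P_{\nu}^{-s}(\omega)Q_{\nu}^{s}(z)$ on the region bounded by $C_k$ and the imaginary axis, then use the oddness of $g$ together with the connection formula \eqref{equation:MagischeFormel3} to convert the imaginary-axis contribution into the $Q_{\nu}^{-i\rho}Q_{\nu}^{i\rho}$ integral.

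The one genuine difference lies in the treatment of the origin. You indent with a \emph{single} clockwise semicircle into $\{\Re(s)>0\}$, split off the principal part $\operatorname{Res}(f;0)/s$, and invoke the classical half-residue formula $\int_{\text{semicircle}}\tfrac{ds}{s}=-\pi i$ to produce the coefficient $1$ in front of $\operatorname{Res}(g;0)P_\nu(\omega)Q_\nu(z)$; your error $\varphi(k)$ is then the integral of the holomorphic remainder $h$ and vanishes by the direct length estimate. The paper instead closes $C_k$ in \emph{two} ways, with semicircles $\beta_k^1$ (left) and $\beta_k^2$ (right), and \emph{adds} the resulting residue identities: the left contour encloses $S(C_k)\cup\{0\}$, the right one only $S(C_k)$, so the sum yields $2\sum_{p\in S(C_k)}+\operatorname{Res}(f;0)$ automatically, without ever isolating the principal part. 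The paper's $\varphi(k)$ is the sum of the two small-semicircle integrals, and it tends to zero because the two limits cancel (each tends to $\pm\tfrac12\operatorname{Res}(g;0)P_\nu Q_\nu$). Your route is the more direct textbook treatment of an indented contour at a simple pole; the paper's symmetrization avoids explicitly invoking the half-residue identity at the cost of carrying two contours. Both give a $\varphi(k)$ that manifestly depends only on $k$ and not on the choice of $C_k$.
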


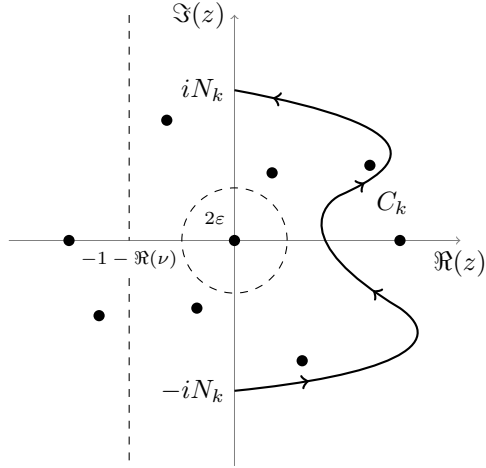
\begin{figure} [ht] 
 \centering
\begin{tikzpicture}[decoration={markings,							
mark=at position 1cm with {\arrow[line width=1pt]{>}},
mark=at position 3.5cm with {\arrow[line width=1pt]{>}},
mark=at position 5.5cm with {\arrow[line width=1pt]{>}},
mark=at position 7.85cm with {\arrow[line width=1pt]{>}},
mark=at position 9cm with {\arrow[line width=1pt]{>}},
mark=at position 10.4cm with {\arrow[line width=1pt]{>}},
mark=at position 12.3cm with {\arrow[line width=1pt]{>}}
}
]
\draw[help lines,->] (-3,0) -- (3,0) coordinate (xaxis);
\draw[help lines,->] (0,-3) -- (0,3) coordinate (yaxis);

\path[draw,line width=0.8pt,postaction=decorate] (0,-2) node[left] {$-iN_k$} ..controls (2,-1.8) and (2.9,-1.4) .. (2.2,-0.9) ..controls (0.7,0) and (1.2,0.5) ..(1.4,0.6)..controls (2.3, 1) and (2.7, 1.5)..(0,2) node[left] {$iN_k$} ;


\draw[dashed] (-1.4,3) -- (-1.4, 0); 
\draw[dashed] (-1.4,-0.5) -- (-1.4, -3);

\draw[dashed] (0,0) circle (0.7); 


\draw (0,0) node[circle,fill,inner sep=1.5pt]{};
\draw (0.5,0.9) node[circle,fill,inner sep=1.5pt]{};
\draw (-0.5,-0.9) node[circle,fill,inner sep=1.5pt]{};
\draw (1.8,1) node[circle,fill,inner sep=1.5pt]{};
\draw (-1.8,-1) node[circle,fill,inner sep=1.5pt]{};
\draw (0.9,-1.6) node[circle,fill,inner sep=1.5pt]{};
\draw (-0.9,1.6) node[circle,fill,inner sep=1.5pt]{};
\draw (2.2,0) node[circle,fill,inner sep=1.5pt]{};
\draw (-2.2,0) node[circle,fill,inner sep=1.5pt]{};

\node[below] at (xaxis) {$\Re(z)$};
\node[left] at (yaxis) {$\Im(z)$};
\node at (2.1,0.5) {$C_{k}$};
\node[left] at (0,0.3) {\scriptsize{$2\varepsilon$}};
\node[below] at (-1.4,0) {\scriptsize $-1-\Re(\nu)$};
\end{tikzpicture}
\caption{Situation of Lemma \ref{lemma:Schritt1}} \label{Skizze:LemmaSchritt1}
\end{figure}

\begin{proof}
Let $k\in\mathbb{N}$ and $\omega,z\in \mathbb{C}\backslash (-\infty,1]$ be arbitrary. First we close up the curve $C_k$ in two different ways and obtain two piecewise smooth curves $\gamma_{k}^1$ and $\gamma_{k}^2$, as shown in Fig. \ref{Skizze:Gamma12}.

\begin{figure} [ht] 
  \subfloat[The closed path $\gamma_{k}^1$.]{
\begin{tikzpicture}[decoration={markings,							
mark=at position 1cm with {\arrow[line width=1pt]{>}},
mark=at position 3.5cm with {\arrow[line width=1pt]{>}},
mark=at position 5.5cm with {\arrow[line width=1pt]{>}},
mark=at position 7.85cm with {\arrow[line width=1pt]{>}},
mark=at position 9cm with {\arrow[line width=1pt]{>}},
mark=at position 10.4cm with {\arrow[line width=1pt]{>}},
mark=at position 12.3cm with {\arrow[line width=1pt]{>}}
}
]
\draw[help lines,->] (-3,0) -- (3,0) coordinate (xaxis);
\draw[help lines,->] (0,-3) -- (0,3) coordinate (yaxis);

\path[draw,line width=0.8pt,postaction=decorate] (0,-2)..controls (2,-1.8) and (2.9,-1.4) .. (2.2,-0.9) ..controls (0.7,0) and (1.2,0.5) ..(1.4,0.6)..controls (2.3, 1) and (2.7, 1.5)..(0,2) node[left] {$iN_k$} -- (0,0.4) node[right] {$i\frac{\varepsilon}{N_k}$} arc (90:270:0.4) -- (0,- 0.4) -- (0,-2) node[left] {$-iN_k$};


\draw[dashed] (-1.4,3) -- (-1.4, 0); 
\draw[dashed] (-1.4,-0.5) -- (-1.4, -3);


\draw (0,0) node[circle,fill,inner sep=1.5pt]{};
\draw (0.5,0.9) node[circle,fill,inner sep=1.5pt]{};
\draw (-0.5,-0.9) node[circle,fill,inner sep=1.5pt]{};
\draw (1.8,1) node[circle,fill,inner sep=1.5pt]{};
\draw (-1.8,-1) node[circle,fill,inner sep=1.5pt]{};
\draw (0.9,-1.6) node[circle,fill,inner sep=1.5pt]{};
\draw (-0.9,1.6) node[circle,fill,inner sep=1.5pt]{};
\draw (2.2,0) node[circle,fill,inner sep=1.5pt]{};
\draw (-2.2,0) node[circle,fill,inner sep=1.5pt]{};

\node[below] at (xaxis) {$\Re(z)$};
\node[left] at (yaxis) {$\Im(z)$};
\node at (2.1,0.5) {$C_{k}$};
\node[below] at (-1.4,0) {\scriptsize $- 1-\Re(\nu) $};
\end{tikzpicture}
}
\quad
  \subfloat[The closed path $\gamma_{k}^2$.]{
\begin{tikzpicture}[decoration={markings,							
mark=at position 1cm with {\arrow[line width=1pt]{>}},
mark=at position 3.5cm with {\arrow[line width=1pt]{>}},
mark=at position 5.5cm with {\arrow[line width=1pt]{>}},
mark=at position 7.85cm with {\arrow[line width=1pt]{>}},
mark=at position 9cm with {\arrow[line width=1pt]{>}},
mark=at position 10.4cm with {\arrow[line width=1pt]{>}},
mark=at position 12.3cm with {\arrow[line width=1pt]{>}}
}
]
\draw[help lines,->] (-3,0) -- (3,0) coordinate (xaxis);
\draw[help lines,->] (0,-3) -- (0,3) coordinate (yaxis);

\path[draw,line width=0.8pt,postaction=decorate] (0,-2)..controls (2,-1.8) and (2.9,-1.4) .. (2.2,-0.9) ..controls (0.7,0) and (1.2,0.5) ..(1.4,0.6)..controls (2.3, 1) and (2.7, 1.5)..(0,2) node[left] {$iN_k$} -- (0,0.4) node[left] {$i\frac{\varepsilon}{N_k}$} arc (90:-90:0.4) -- (0,- 0.4) -- (0,-2) node[left] {$-iN_k$};


\draw[dashed] (-1.4,3) -- (-1.4, 0); 
\draw[dashed] (-1.4,-0.5) -- (-1.4, -3);


\draw (0,0) node[circle,fill,inner sep=1.5pt]{};
\draw (0.5,0.9) node[circle,fill,inner sep=1.5pt]{};
\draw (-0.5,-0.9) node[circle,fill,inner sep=1.5pt]{};
\draw (1.8,1) node[circle,fill,inner sep=1.5pt]{};
\draw (-1.8,-1) node[circle,fill,inner sep=1.5pt]{};
\draw (0.9,-1.6) node[circle,fill,inner sep=1.5pt]{};
\draw (-0.9,1.6) node[circle,fill,inner sep=1.5pt]{};
\draw (2.2,0) node[circle,fill,inner sep=1.5pt]{};
\draw (-2.2,0) node[circle,fill,inner sep=1.5pt]{};

\node[below] at (xaxis) {$\Re(z)$};
\node[left] at (yaxis) {$\Im(z)$};
\node at (2.1,0.5) {$C_{k}$};
\node[below] at (-1.4,0) {\scriptsize $- 1-\Re(\nu)$};
\end{tikzpicture}

}
\caption{\label{Skizze:Gamma12}}
\end{figure}

By definition, both curves $\gamma_{k}^1$ and $\gamma_{k}^2$ can be written as a composition of the following curves:
\begin{align*}
\gamma_{k}^1 = C_k + \alpha_{k} + \beta_{k}^1 + \delta_k, \\
\gamma_{k}^2 = C_k + \alpha_{k} + \beta_{k}^2 + \delta_k,
\end{align*}
where
\begin{align*}
\alpha_k: \left[ -N_k,-\frac{\varepsilon}{N_k} \right]\rightarrow \mathbb{C},&\quad \rho\mapsto -\rho i; \\
\delta_k: \left[ \frac{\varepsilon}{N_k}, N_k \right]\rightarrow \mathbb{C},&\quad \rho\mapsto -\rho i; \\
\beta_{k}^1 : \left[ 0,\pi \right]\rightarrow \mathbb{C},&\quad \theta\mapsto \frac{\varepsilon}{N_k}ie^{i\theta}; \\
\beta_{k}^2 : \left[ 0,\pi \right]\rightarrow \mathbb{C}, &\quad \theta\mapsto \frac{\varepsilon}{N_k}ie^{-i\theta}.
\end{align*}
We observe that the function $u\mapsto P_{\nu}^{-u}(\omega )Q_{\nu}^{u} ( z ) $ is holomorphic on the half plane $\lbrace u\in\mathbb{C} : \Re(u)>-1-\Re\left( \nu \right) \rbrace$. Hence, we can rewrite the left-hand side of $\eqref{equation:Schritt1}$ by using first the residue theorem and then the above decomposition of $\gamma_{k}^1, \gamma_{k}^2$ as follows:
\begin{align}
\label{equation:Schritt1Bew1}
&\qquad 2\left( \sum\limits_{p \in S\left( C_k \right)} \Res\left( g;p \right) e^{-i\pi p} P_{\nu}^{-p}\left( \omega \right)Q_{\nu}^{p}\left( z \right) \right) + \Res\left( g;0 \right) P_{\nu}\left( \omega \right)Q_{\nu}\left( z \right) \nonumber \\
&=\frac{1}{2\pi i}\oint\limits_{\gamma_{k}^1} g(s) e^{-i\pi s} P_{\nu}^{-s}\left( \omega \right)Q_{\nu}^{s}\left( z \right) ds + \frac{1}{2\pi i}\oint\limits_{\gamma_{k}^2} g(s) e^{-i\pi s} P_{\nu}^{-s}\left( \omega \right)Q_{\nu}^{s}\left( z \right)  ds \nonumber \\
&= \frac{1}{\pi i}\int\limits_{\alpha_k} g(s) e^{-i\pi s} P_{\nu}^{-s}\left( \omega \right)Q_{\nu}^{s}\left( z \right) ds + \frac{1}{\pi i}\int\limits_{\delta_k} g(s) e^{-i\pi s} P_{\nu}^{-s}\left( \omega \right)Q_{\nu}^{s}\left( z \right)  ds \nonumber \\
&\quad +\frac{1}{\pi i}\int\limits_{C_k}g(s)e^{-i\pi s}P_{\nu}^{-s}\left( \omega \right)  Q_{\nu}^{s}\left( z \right) ds + \varphi(k),
\end{align}
with
\begin{align}
\label{equation:DefFehlerterm}
\varphi(k):= \frac{1}{2\pi i}\int\limits_{\beta_{k}^1} g(s) e^{-i\pi s} P_{\nu}^{-s}\left( \omega \right)Q_{\nu}^{s}\left( z \right) ds + \frac{1}{2\pi i}\int\limits_{\beta_{k}^2} g(s) e^{-i\pi s} P_{\nu}^{-s}\left( \omega \right)Q_{\nu}^{s}\left( z \right)  ds.
\end{align}
We observe that we can sum up the first two summands of \eqref{equation:Schritt1Bew1} as follows:
\begin{align*}
&\frac{1}{\pi i}\int\limits_{\alpha_k} g(s) e^{-i\pi s} P_{\nu}^{-s}\left( \omega \right)Q_{\nu}^{s}\left( z \right) ds + \frac{1}{\pi i}\int\limits_{\delta_k} g(s) e^{-i\pi s} P_{\nu}^{-s}\left( \omega \right)Q_{\nu}^{s}\left( z \right)  ds \\
= & -\frac{1}{\pi}\int\limits_{\frac{\varepsilon}{N_k}}^{N_k} g\left( i\rho \right)\left( e^{-i\pi\left(i\rho \right)}P_{\nu}^{-i\rho}\left( \omega \right)Q_{\nu}^{i\rho}\left( z \right) - e^{i\pi\left( i\rho \right)}P_{\nu}^{i\rho}\left( \omega \right)Q_{\nu}^{-i\rho}\left( z \right) \right)d\rho,
\end{align*}
where we used that the function $g$ is odd. Now we can use \eqref{equation:MagischeFormel3} to simplify the  expression in paranthesis under the integral sign and obtain, in total:
\begin{align}
\label{equation:Schritt1Bew2}
& \frac{1}{\pi i}\int\limits_{\alpha_k} g(s) e^{-i\pi s} P_{\nu}^{-s}\left( \omega \right)Q_{\nu}^{s}\left( z \right) ds + \frac{1}{\pi i}\int\limits_{\delta_k} g(s) e^{-i\pi s} P_{\nu}^{-s}\left( \omega \right)Q_{\nu}^{s}\left( z \right)  ds \nonumber \\
=\, &\frac{2}{\pi}\int\limits_{\frac{\varepsilon}{N_k}}^{N_k} \frac{\sin\left( i\rho \pi \right)}{\pi}g\left( i\rho \right)Q_{\nu}^{-i\rho} \left( z \right) Q_{\nu}^{i\rho}\left( \omega \right) d\rho.
\end{align}
Thus the claimed equation \eqref{equation:Schritt1} follows from \eqref{equation:Schritt1Bew1} together with \eqref{equation:Schritt1Bew2}.

It remains to show that $\lim_{k\rightarrow\infty}\varphi(k)=0$, where $\varphi(k)$ is defined as in \eqref{equation:DefFehlerterm}. When we use the parametrisation of $\beta_{k}^{1}$ and $\beta_{k}^2$ given above, we obtain
\begin{align*}
&\varphi\left(k \right) = \, \frac{1}{2\pi}\int\limits_{0}^{\pi} g\left( i\frac{\varepsilon}{N_k}e^{i\theta} \right) e^{-i\pi \left(i\frac{\varepsilon}{N_k}e^{i\theta}\right)} \LegendreP{\nu}{-i\frac{\varepsilon}{N_k}e^{i\theta}}{\omega} \cdot \LegendreQ{\nu}{i\frac{\varepsilon}{N_k}e^{i\theta}}{z}\cdot \left( i\frac{\varepsilon }{N_k}e^{i\theta} \right) d\theta  \\
& -\frac{1}{2\pi} \int\limits_{0}^{\pi}  g\left( i\frac{\varepsilon}{N_k}e^{-i\theta} \right) e^{-i\pi \left(i\frac{\varepsilon}{N_k}e^{-i\theta}\right)} \LegendreP{\nu}{-i\frac{\varepsilon}{N_k}e^{-i\theta}}{\omega} \cdot \LegendreQ{\nu}{i\frac{\varepsilon}{N_k}e^{-i\theta}}{z}\cdot \left( i\frac{\varepsilon }{N_k}e^{-i\theta} \right)  d\theta.
\end{align*}
By assumption, the meromorphic function $g$ has either a simple pole at the origin or is complex differentiable there. Hence the limit of the function $s g(s)$ exists as $s\rightarrow 0$ and is given by $\lim_{s\rightarrow 0} sg(s) = \Res\left( g;0 \right)\in\mathbb{C}$. Thus the integrand $s\mapsto sg(s)e^{-i\pi s}P_{\nu}^{-s}(\omega)Q_{\nu}^{ s}(z)$ is continuous in $B_{\varepsilon}(0)$, in particular at the origin $s=0$, and is bounded on $B_{\varepsilon}(0)$. By Lebesgue's dominated convergence theorem and since $\lim_{k\rightarrow\infty} N_k=\infty$, we have
\begin{align*}
\lim\limits_{k\rightarrow \infty}\varphi(k) &= \frac{1}{2\pi}\int\limits_{0}^{\pi} \Res(g;0)P_{\nu}(\omega)Q_{\nu}(z) d\theta - \frac{1}{2\pi}\int\limits_{0}^{\pi} \Res(g;0)P_{\nu}(\omega)Q_{\nu}(z) d\theta  \\
&= 0.
\end{align*} 
\end{proof}

The following corollary follows immediately from Lemma \ref{lemma:Schritt1}.

\begin{corollary}
\label{corollary:Corollary1Schritt1}
Let $\nu\in\mathbb{C}$ with $\Re(\nu)>-1$ and $z,\omega\in\mathbb{C}\backslash\left( -\infty, 1 \right]$. Further, let $g$ be a meromorphic function with the same properties as in Lemma \emph{\ref{lemma:Schritt1}} and suppose:
\begin{enumerate}
\item[$1)$] There exists a sequence of curves $\left( C_k\right)_{k\in\mathbb{N}}$, each of them as in Lemma $\emph{\ref{lemma:Schritt1}}$, and such that
\begin{align}
\lim\limits_{k\rightarrow \infty}\int\limits_{C_k} g(s)e^{-i\pi s}P_{\nu}^{-s}(\omega)Q_{\nu}^{s}(z) ds = 0. \label{equation:Schritt1.1}
\end{align}
\item[$2)$] The following integral converges in $\mathbb{C}$:
\end{enumerate}
\begin{align}
\int\limits_{0}^{\infty} \frac{\sin(\pi i \rho)}{\pi} g\left(i\rho\right)Q_{\nu}^{-i\rho}(z)Q_{\nu}^{i\rho}(\omega) d\rho. \label{equation:Schritt1.2}
\end{align}
Then it follows that
\begin{align}
\label{equation:Corollary1Schritt1}
&\frac{2}{\pi}\int\limits_{0}^{\infty} \frac{\sin(\pi i \rho)}{\pi} g\left(i\rho\right)Q_{\nu}^{-i\rho}(z)Q_{\nu}^{i\rho}(\omega) d\rho \nonumber \\
= & \lim\limits_{k\rightarrow\infty} \left( 2 \sum\limits_{p\in S\left( C_k \right)} \emph{Res}\left( g;p \right) e^{-i\pi p} P_{\nu}^{-p}(\omega)Q_{\nu}^{p}(z) \right)  + \emph{Res}\left( g;0 \right) P_{\nu}( \omega )Q_{\nu}(z).
\end{align} 
\end{corollary}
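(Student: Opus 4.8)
The plan is to pass to the limit $k\to\infty$ in the identity \eqref{equation:Schritt1} of Lemma \ref{lemma:Schritt1}. Once a sequence of curves $(C_k)_{k\in\mathbb{N}}$ as in hypothesis $1)$ is fixed, that identity holds verbatim for every $k\in\mathbb{N}$, so I would isolate the sum over the enclosed poles on the left and treat the four remaining terms on the right — the truncated integral over $[\varepsilon/N_k, N_k]$, the contour integral over $C_k$, the error term $\varphi(k)$, and the $k$-independent constant $\Res(g;0)P_\nu(\omega)Q_\nu(z)$ — one at a time.

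The key observation is that three of these four terms have immediate limits. By hypothesis $1)$ the contour integral over $C_k$ tends to $0$; by the conclusion of Lemma \ref{lemma:Schritt1} the error term satisfies $\varphi(k)\to 0$; and the residue-at-origin term does not depend on $k$. For the remaining truncated integral I would use that $N$ is unbounded and monotonically increasing, so that $\varepsilon/N_k\to 0^+$ and $N_k\to\infty$ as $k\to\infty$; hypothesis $2)$, namely the convergence of the doubly-improper integral $\int_0^\infty$, then yields $\int_{\varepsilon/N_k}^{N_k}\to\int_0^\infty$. Since the right-hand side therefore converges, the left-hand side — the limit of the residue sum — must converge as well, and equating the two limits and moving the constant term back across the equation produces exactly \eqref{equation:Corollary1Schritt1}.

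There is essentially no genuine obstacle here, which is why the result follows immediately; the argument is pure bookkeeping around a single limit. The only point deserving a sentence of care is the passage $\int_{\varepsilon/N_k}^{N_k}\to\int_0^\infty$: one should note that convergence of the improper integral in hypothesis $2)$, which is improper at both endpoints, means precisely that every such pair of truncations tending to $0$ and to $\infty$ produces the same limit, rather than merely that some iterated limit exists. This is the one place where hypothesis $2)$ interacts with the unboundedness and monotonicity of $N$ furnished by hypothesis $(iii)$ of Lemma \ref{lemma:Schritt1}.
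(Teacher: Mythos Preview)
Your proposal is correct and matches the paper's approach exactly; the paper simply states that the corollary ``follows immediately from Lemma~\ref{lemma:Schritt1}'' without spelling out the limit argument, and you have supplied precisely that argument. One tiny remark: the integral in hypothesis~$2)$ is in fact only improper at $\infty$, since $\sin(\pi i\rho)$ vanishes to first order at $\rho=0$ and cancels the possible simple pole of $g(i\rho)$ there (this is why the proof of Lemma~\ref{lemma:Schritt1} shows $s\,g(s)e^{-i\pi s}P_\nu^{-s}(\omega)Q_\nu^s(z)$ extends continuously to $s=0$), so your closing caveat about the lower endpoint, while harmless, is not actually needed.
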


It is an interesting problem to investigate for which functions $g$ as in Lemma \ref{lemma:Schritt1} the conditions $1)$ and $2)$ of Corollary \ref{corollary:Corollary1Schritt1} are satisfied. We will not pursue this problem in full generality. Instead, we will restrict the parameters $\omega, z$ to real values. 

The following lemma establishes a class of functions $g$ for which condition $2)$ of Corollary \ref{corollary:Corollary1Schritt1} is satisfied.

\begin{lemma}
\label{lemma:ConditionsSchritt1}
Let $\nu\in \mathbb{C}$ with $\Re(\nu)\notin \{ -1, -2, -3,... \}$ \emph{(}e.g. $\Re(\nu)>-1$\emph{)} and let $z,\omega\in \left(1,\infty \right)$. Let $g$ be as in Lemma \emph{\ref{lemma:Schritt1}}.
\begin{itemize}
\item[$(i)$] Suppose $g(i\rho)=O\left(\frac{1}{\rho}\right)$, as $\rho\rightarrow\infty$ in $\mathbb{R}$. Then the integral \eqref{equation:Schritt1.2} is absolutely convergent, and hence convergent.

\item[$(ii)$] Suppose there exists some constant $C\in\mathbb{C}$ such that $g(i\rho)=C\left(1+O\left(\frac{1}{\rho}\right)\right)$, as $\rho\rightarrow\infty$ in $\mathbb{R}$. Then the integral \eqref{equation:Schritt1.2} is convergent for $\omega\neq z$, but divergent for $\omega = z$. Further, it is never absolutely convergent.
\end{itemize}
\end{lemma}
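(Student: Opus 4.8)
The plan is to reduce every assertion to the large-$\rho$ behaviour of the integrand supplied by Lemma~\ref{lemma:LegendreProdAsymp}, part $2)$. Write $F(\rho)$ for the integrand of \eqref{equation:Schritt1.2} and set $c:=\tfrac{\pi}{2}(\nu+1)$. With $\tilde z,\tilde\omega\in(0,\infty)$ as in \eqref{equation:TildeDefinitionen}, the asymptotics \eqref{equation:AsympLegendreProdKond2} give
\begin{align*}
F(\rho)=\frac{i\,g(i\rho)}{\rho}\,\cos\!\big(\tilde z\rho-c\big)\cos\!\big(\tilde\omega\rho-c\big)\Big(1+O\big(\tfrac1\rho\big)\Big)\qquad(\rho\to\infty).
\end{align*}
Because each cosine has a real argument shifted only by the fixed imaginary constant $-\Im(c)$, it stays bounded, $|\cos(x-i\,\Im c)|\le\cosh(\Im c)$, so the oscillatory factor is $O(1/\rho)$. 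Near the origin there is nothing to prove: as $\Re(\nu)\notin\{-1,-2,\dots\}$, the number $\nu\pm i\rho$ is never a negative integer and $Q_{\nu}^{\pm i\rho}(z),Q_{\nu}^{\pm i\rho}(\omega)$ are continuous for all $\rho\in\mathbb{R}$, while $\tfrac{\sin(\pi i\rho)}{\pi}g(i\rho)\to\Res(g;0)$ is finite; hence $F$ is bounded and continuous on $(0,1]$ and only the tail $\int_1^\infty$ matters. For $(i)$ the hypothesis $g(i\rho)=O(1/\rho)$ turns the display into $F(\rho)=O(1/\rho^2)$, which is absolutely integrable on $[1,\infty)$, giving absolute and hence ordinary convergence.

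For $(ii)$ I would insert $g(i\rho)=C(1+O(1/\rho))$ and apply $\cos A\cos B=\tfrac12\cos(A-B)+\tfrac12\cos(A+B)$ with $A-B=(\tilde z-\tilde\omega)\rho$ and $A+B=(\tilde z+\tilde\omega)\rho-2c$, so that up to an absolutely integrable $O(1/\rho^2)$ remainder the tail reduces to
\begin{align*}
\frac{iC}{2}\int_1^\infty\frac{\cos\big((\tilde z-\tilde\omega)\rho\big)}{\rho}\,d\rho+\frac{iC}{2}\int_1^\infty\frac{\cos\big((\tilde z+\tilde\omega)\rho-2c\big)}{\rho}\,d\rho.
\end{align*}
Since $z\mapsto z/(z^2-1)^{1/2}$ is strictly decreasing and $\cosh$ strictly increasing, $z\mapsto\tilde z$ is strictly monotone on $(1,\infty)$, whence $z=\omega\iff\tilde z=\tilde\omega$. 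If $z\ne\omega$, both frequencies $\tilde z\pm\tilde\omega$ are nonzero and Dirichlet's test (bounded primitive of the oscillatory factor against the monotone factor $1/\rho$) makes both integrals converge, so \eqref{equation:Schritt1.2} converges. If $z=\omega$, the first integrand degenerates to the non-oscillatory $1/(2\rho)$, contributing $\tfrac{iC}{2}\log R\to\infty$ as $R\to\infty$ while all other pieces converge; thus \eqref{equation:Schritt1.2} diverges (here $C\ne0$, the case $C=0$ being covered by $(i)$).

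The remaining claim, that \eqref{equation:Schritt1.2} is \emph{never} absolutely convergent, is the step I expect to be the main obstacle, since it cannot be settled by pointwise decay. From the display $|F(\rho)|\ge\tfrac{|C|}{2\rho}\,h(\rho)$ for large $\rho$, where $h(\rho):=|\cos(\tilde z\rho-c)\cos(\tilde\omega\rho-c)|$, and the goal is $\int_1^\infty h(\rho)/\rho\,d\rho=\infty$, which I would establish by a mean-value argument. Using $|\cos(\tilde z\rho-c)|^2=\tfrac12\cosh(2\Im c)+\tfrac12\cos(2\tilde z\rho-2\Re c)$ and the analogue for $\tilde\omega$, the product $h^2$ is a positive constant at least $\tfrac14\cosh^2(2\Im c)$ plus genuinely oscillatory terms, so $\tfrac1T\int_1^T h^2\to m>0$; since $h\le M$ for some constant $M$, one has $h\ge h^2/M$ and therefore $\int_1^T h\ge\delta T$ for large $T$ and some $\delta>0$. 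An integration by parts, $\int_1^T h/\rho\,d\rho=H(T)/T+\int_1^T H(\rho)/\rho^2\,d\rho$ with $H(\rho)=\int_1^\rho h$, then forces $\int_1^T h/\rho\,d\rho\ge\delta\log T\to\infty$, so $\int_1^\infty|F|=\infty$. The delicate points are to verify that no oscillatory frequency in $h^2$ collapses in a way that cancels the mean (the surviving constant is always at least $\tfrac14\cosh^2(2\Im c)>0$, even when $z=\omega$) and to carry the complex shift $c$ through all estimates via $|\cos(x+iy)|^2=\cos^2x+\sinh^2y$.
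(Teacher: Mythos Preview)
Your proof is correct and follows the same strategy as the paper: reduce via the asymptotic \eqref{equation:AsympLegendreProdKond2} to analyzing the oscillatory tail $\tfrac{iC}{\rho}\cos(\tilde z\rho-c)\cos(\tilde\omega\rho-c)$, then settle convergence, divergence, and non-absolute-convergence by elementary real-variable arguments. The only differences are cosmetic---you invoke Dirichlet's test and a Ces\`aro-mean lower bound where the paper writes down explicit antiderivatives of $\cos(\tilde z\rho+c)\cos(\tilde\omega\rho+c)$ and $\cos^2$ and integrates by parts---and your remark that $C\ne 0$ must be assumed in $(ii)$ is a valid clarification that the paper leaves implicit.
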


\begin{proof}
Let us abbreviate the integrand in \eqref{equation:Schritt1.2} by
\begin{align*}
f(\rho):=\frac{\sin(\pi i \rho)}{\pi} g\left(i\rho\right)Q_{\nu}^{-i\rho}(z)Q_{\nu}^{i\rho}(\omega),\quad \rho\in (0,\infty).
\end{align*}
The function $f(\rho)$ is continuous on $(0,\infty)$ and can be extended continuously to $[0,\infty)$ by the assumptions on $g$.  Therefore, we only need to investigate how the integrand behaves asymptotically as $\rho\rightarrow\infty$ in $\mathbb{R}$. We will see that both $(i)$ and $(ii)$ follow from \eqref{equation:AsympLegendreProdKond2} by elementary calculations.

First consider case $(i)$.  Note that the function 
\begin{align*}
\mathbb{R}\ni \rho \mapsto \cos\left( \tilde{z}\rho-\frac{\pi}{2}\left( \nu+1 \right) \right)\cos\left( \tilde{\omega}\rho-\frac{\pi}{2}\left( \nu+1 \right) \right)\in\mathbb{C}
\end{align*}
is bounded, where $\tilde{z}, \tilde{\omega}\in (0,\infty)$ are defined by \eqref{equation:TildeDefinitionen}. Thus it follows from \eqref{equation:AsympLegendreProdKond2} that
\begin{align*}
\left\vert f(\rho) \right\vert = O\left( \frac{1}{\rho^2} \right)
\end{align*}
as $\rho\rightarrow\infty$ in $\mathbb{R}$. Hence the integral \eqref{equation:Schritt1.2} is absolutely convergent.

Now consider case $(ii)$. In this case it follows from \eqref{equation:AsympLegendreProdKond2} that
\begin{align}
f(\rho) = \underbrace{C\frac{i}{\rho} \cos\left( \tilde{z}\rho-\frac{\pi}{2}\left( \nu+1 \right) \right)\cos\left( \tilde{\omega}\rho-\frac{\pi}{2}\left( \nu+1 \right) \right)}_{=:\psi(\rho)}\left( 1+O\left( \frac{1}{\rho} \right) \right) \label{equation:ConditionBeweis4}
\end{align}
as $\rho\rightarrow\infty$ in $\mathbb{R}$. Note that $z\neq\omega$ if and only if $\tilde{z}\neq\tilde{\omega}$, which follows directly from \eqref{equation:TildeDefinitionen}.

We claim that if $z\neq \omega$ and $K>0$, then the function $\psi$ is integrable over the interval $[K,\infty)$, where ``integrable'' is meant here in the sense that the integral converges. Furthermore, $\psi$ is not integrable over $[K,\infty)$ if $z=\omega$, and for any values of $z,\omega\in (1,\infty)$ it is not absolutely integrable over $[K,\infty)$. 

Before we prove this claim, let us discuss why statement $(ii)$ follows from this behaviour of $\psi $. It follows from \eqref{equation:ConditionBeweis4} that there exists a function $\phi:[\Lambda,\infty)\rightarrow\mathbb{R}$ with $\Lambda>0$ such that for some constant $D>0$ we have $\vert \phi(\rho) \vert\leq \frac{D}{\rho}$  and $f(\rho)=\psi(\rho) + \psi(\rho)\cdot \phi(\rho)$ for all $\rho\in [\Lambda,\infty)$. Hence there exists some constant $E>0$ such that $\vert \psi(\rho)\cdot \phi(\rho)\vert \leq \frac{E}{\rho^2}$ for all $\rho\in [\Lambda,\infty)$ and the function $\psi\cdot \phi$ is continuous because $f,\psi$ are continuous and $\psi \cdot \phi  = f  - \psi $. Thus $\psi \cdot \phi $ must be (absolutely) integrable over $[\Lambda,\infty)$. Therefore it follows from the equation $f=\psi+\psi\cdot \phi$ that $f$ is integrable over $[\Lambda,\infty)$ if and only if $\psi$ is integrable over $[\Lambda,\infty)$. This shows that if the above claim is true, then the function $f$ is integrable if $z\neq \omega$, but it is not integrable if $z=\omega$.

Further, if the above claim is true then the integral of $f$ is never absolutely convergent. This is easily seen, since it follows from \eqref{equation:ConditionBeweis4} that
\begin{align*}
\left\vert f(\rho) \right\vert = \left\vert \psi(\rho)\right\vert \left( 1+o(1) \right)
\end{align*}
as $\rho\rightarrow\infty$ in $\mathbb{R}$.
Hence for any $K>0$ the function $\vert f \vert$ is integrable over $[K,\infty)$ if and only if $\left\vert \psi\right\vert$ is integrable over $[K,\infty)$.

It remains to prove the above claim for $\psi$. Let $K>0$ and suppose $z\neq \omega$ and thus $\tilde{z}\neq \tilde{\omega}$. To keep the notation short, let us introduce the following abbreviations:
\begin{align*}
c:=-\frac{\pi}{2}\left( \nu + 1 \right); \quad q(\rho):=\cos\left( \tilde{z}\rho +c \right)\cos\left( \tilde{\omega}\rho +c \right).
\end{align*}
 One can easily show that the function $q$ has the following antiderivative:
\begin{align*}
Q(\rho) := \frac{1}{\tilde{z}^2-\tilde{\omega}^2}\left( \tilde{z}\cdot \sin\left( \tilde{z}\rho +c \right)\cos\left( \tilde{\omega}\rho +c  \right) - \tilde{\omega} \cdot \cos\left( \tilde{z}\rho +c \right)\sin\left( \tilde{\omega}\rho +c  \right)\right).
\end{align*}
This antiderivative is obviously bounded. Hence, using integration by parts,
\begin{align*}
\int\limits_{K}^{\infty} \psi(\rho) d\rho &= \lim\limits_{L\rightarrow\infty} \Bigg(  C \cdot i\left( \frac{Q(L)}{L} - \frac{Q(K)}{K} \right) + C \cdot i \int\limits_{K}^{L} \frac{Q(\rho)}{\rho^2} d\rho \Bigg) \\
&= - C \cdot i \left( \frac{Q(K)}{K}  - \int\limits_{K}^{\infty} \frac{Q(\rho)}{\rho^2} d\rho \right).
\end{align*}
Observe that the remaining integral on the right-hand side is even absolutely convergent. Thus the integral on the left-hand side must be convergent as well.

Suppose now $z=\omega$, and thus $\tilde{z}=\tilde{\omega}$. Again, one can easily show that for all $\tau\in\mathbb{C}$ the function $\mathbb{R}\ni\rho\mapsto \cos^2\left( \tilde{z}\rho +\tau \right)\in\mathbb{C}$ has the following antiderivative:
\begin{align}
\label{equation:AntiderivativeCosineSquare}
\rho\mapsto R(\rho;\tau):=\frac{\sin\left( \tilde{z}\rho +\tau  \right)\cos\left( \tilde{z}\rho +\tau  \right)}{2\tilde{z}} + \frac{\rho}{2}.
\end{align}
When we apply integration by parts, we obtain:
\begin{align*}
\int\limits_{K}^{\infty} \psi(\rho) d\rho = \lim\limits_{L\rightarrow\infty} \Bigg(  C \cdot i\left( \frac{R(L;c)}{L} - \frac{R(K;c)}{K} \right) +& \frac{C \cdot i}{2\tilde{z}}  \int\limits_{K}^{L} \frac{\sin\left( \tilde{z}\rho +c  \right)\cos\left( \tilde{z}\rho +c  \right)}{ \rho^2} d\rho \Bigg. \\
\Bigg. +& \frac{C\cdot i}{2}\ln\left( \frac{L}{K} \right) \Bigg).
\end{align*}
The right-hand side is not convergent as $L\rightarrow\infty$, because we have $\lim_{L\rightarrow\infty}\ln\left( \frac{L}{K} \right)=\infty$, and all the other terms on the right-hand side converge to some complex number for $L\rightarrow\infty$. Therefore, the left-hand side must be divergent as well. In other words, $\psi$ is not integrable if $z=\omega$.

It remains to show that $\psi$ is not absolutely integrable over $[K,\infty)$ for all values of $z,\omega\in (1,\infty)$. It is easy to show that $\vert \cos(s) \vert\geq \vert \cos(\Re(s)) \vert$ for all $s\in\mathbb{C}$. Thus we have 
\begin{align*}
\vert \psi(\rho) \vert \geq \frac{C}{\rho}\cdot \vert \cos\left( \tilde{z}\rho +\Re(c) \right)\vert \cdot \vert\cos \left( \tilde{\omega}\rho +\Re(c) \right) \vert,\quad \forall \rho\in(0,\infty).
\end{align*}
 If $z=\omega$ then we have for all $L>K$:
 \begin{align*}
\int\limits_{K}^{L} \vert \psi(\rho) \vert \, d\rho &\geq C  \int\limits_{K}^{L} \frac{1}{\rho}\cdot \cos^2\left( \tilde{z}\rho +\Re(c) \right) d\rho \\
&= C  \left( \frac{R(L;\Re(c))}{L} - \frac{R(K;\Re(c))}{K} \right) + \frac{C }{2\tilde{z}}  \int\limits_{K}^{L} \frac{\sin\left( \tilde{z}\rho +c  \right)\cos\left( \tilde{z}\rho +c  \right)}{ \rho^2} d\rho \Bigg. \\
\Bigg. &\qquad\qquad\qquad\qquad\qquad\qquad\quad\quad\,\,  + \frac{C }{2}\ln\left( \frac{L}{K} \right) \Bigg.,
 \end{align*}
where $\rho\mapsto R(\rho;\Re(c))$ is the function defined in \eqref{equation:AntiderivativeCosineSquare} for $\tau=\Re(c)$. The  right-hand side diverges to $\infty$ as $L\rightarrow\infty$. Thus $\psi$ is also not absolutely integrable.

Suppose now $z\neq\omega$. Then we have for all $L>K$:
\begin{align*}
\int\limits_{K}^{L} \vert \psi(\rho) \vert\,  d\rho \geq C\cdot \int\limits_{K}^{L} \frac{1}{\rho}\cdot \cos^2\left( \tilde{z}\rho +\Re(c) \right)\cos^2\left( \tilde{\omega}\rho + \Re(c) \right)  d\rho.
\end{align*}
The integral on the right-hand side diverges to $\infty$ as $L\rightarrow\infty$: An easy calculation yields that the function $\mathbb{R}\ni \rho\mapsto \cos^2\left( \tilde{z}\rho +\Re(c) \right)\cos^2\left( \tilde{\omega}\rho + \Re(c) \right)\in\mathbb{R}$ has an elementary antiderivative of the form
\begin{align*}
S(\rho):=\frac{\rho}{4} + u(\rho),
\end{align*}
where $u$ is a bounded function. Thus, with integration by parts, we obtain
\begin{align*}
\lim\limits_{L\rightarrow\infty} \int\limits_{K}^{L} &\frac{1}{\rho}\cdot \cos^2\left( \tilde{z}\rho +\Re(c) \right)\cos^2\left( \tilde{\omega}\rho + \Re(c) \right)  d\rho = \\
&= \lim\limits_{L\rightarrow\infty} \left( \left( \frac{S(L)}{L} - \frac{S(K)}{K} \right) + \int\limits_{K}^{L} \frac{u(\rho)}{\rho^2} d\rho + \frac{1}{4}\ln\left( \frac{L}{K} \right)\right) = \infty.
\end{align*}
Consequently, it follows that $\lim_{L\rightarrow\infty} \int_{K}^{L} \vert \psi(\rho) \vert d\rho = \infty$ and thus our proof is complete.
\end{proof}

Now we want to consider functions $g$ for which condition $1)$ of Corollary \ref{corollary:Corollary1Schritt1} is satisfied. Note that there exists always a sequence $\left( N_k \right)$ as in Lemma \ref{lemma:Schritt1} such that all curves $C_k$ can be chosen as semicircles centered at the origin. Therefore we can assume the curves $C_k$ to be semicircles, which is no problem because $1)$ is a condition of existence for the $C_k$.

\begin{lemma}
Let $g$ be a function as in Lemma $\ref{lemma:Schritt1}$. Let $\nu \in \mathbb{C}$ with $\Re(\nu)>-1$ and  $\omega, z\in (1,\infty)$ with $\omega<z$.
Suppose that $(C_k)_{k\in\mathbb{N}}$, is a sequence of curves as in Lemma $\ref{lemma:Schritt1}$, each of which is a semicircle centered at the origin. Suppose further that $g$ is bounded on the union of the images of all $C_k$. Then $\lim_{k\rightarrow\infty}\int_{C_k} g(s) e^{-i\pi s}P_{\nu}^{-s}(\omega)Q_{\nu}^{s}(z) ds = 0$. In particular, condition $1)$ of Corollary $\ref{corollary:Corollary1Schritt1}$ is satisfied.
\end{lemma}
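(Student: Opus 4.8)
The plan is to parametrise the semicircle, reduce the contour integral to a real integral over $[-\tfrac{\pi}{2},\tfrac{\pi}{2}]$, and then estimate that real integral via the product asymptotics of Lemma \ref{lemma:LegendreProdAsymp} combined with Jordan's inequality.

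First I would write $C_k$ as $s=N_k e^{i\theta}$ with $\theta\in[-\tfrac{\pi}{2},\tfrac{\pi}{2}]$, so that $\Re(s)=N_k\cos\theta\ge 0$ and $\lvert ds\rvert=N_k\,d\theta$. Since $\lvert s\rvert=N_k\to\infty$ while $\arg(s)$ stays in the closed sector $\lvert\arg(s)\rvert\le\tfrac{\pi}{2}\subset\{\Re>-\tfrac12\}$, the asymptotic estimate \eqref{equation:AsympLegendreProdKond1} applies with $\mu=s$, uniformly in $\theta$. It provides a constant $C_0>0$ and an index $k_0$ such that, for $k\ge k_0$ and all $s$ on $C_k$,
\[
\bigl\lvert e^{-i\pi s}P_{\nu}^{-s}(\omega)Q_{\nu}^{s}(z)\bigr\rvert
\le \frac{C_0}{2N_k}\Bigl( e^{\Re(s)(\tilde z-\tilde\omega)} + \bigl\lvert e^{i\pi(\nu+1)}\bigr\rvert\,e^{-\Re(s)(\tilde z+\tilde\omega)} \Bigr),
\]
where $\tilde z,\tilde\omega$ are defined in \eqref{equation:TildeDefinitionen}.

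The next step is to exploit the hypothesis $\omega<z$. The map $t\mapsto t/\sqrt{t^2-1}$ is strictly decreasing on $(1,\infty)$ and $\cosh$ is increasing, so $t\mapsto\tilde t$ is strictly decreasing; hence $\omega<z$ forces $\tilde\omega>\tilde z$, and both $\tilde\omega-\tilde z$ and $\tilde z+\tilde\omega$ are strictly positive. Because $\Re(s)\ge 0$ on the semicircle, both exponential factors are then bounded by $1$. Writing $M$ for a bound of $g$ on $\bigcup_k C_k$ and $C_\nu:=\lvert e^{i\pi(\nu+1)}\rvert$, the substitution $\lvert ds\rvert=N_k\,d\theta$ gives
\[
\Bigl\lvert\int_{C_k} g(s) e^{-i\pi s}P_{\nu}^{-s}(\omega)Q_{\nu}^{s}(z)\,ds\Bigr\rvert
\le \frac{MC_0}{2}\int_{-\pi/2}^{\pi/2}\bigl( e^{-N_k\cos\theta(\tilde\omega-\tilde z)} + C_\nu e^{-N_k\cos\theta(\tilde z+\tilde\omega)} \bigr)\,d\theta.
\]

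The main obstacle is precisely that the crude ``maximum times length'' bound is worthless here: the integrand has modulus of order $N_k^{-1}$ whereas the arc has length $\pi N_k$, so their product is merely $O(1)$; moreover, near the endpoints $\theta=\pm\tfrac{\pi}{2}$ the quantity $\Re(s)=N_k\cos\theta$ degenerates to $0$, so the exponentials are no longer small there. To extract genuine decay I would invoke Jordan's inequality $\sin\phi\ge\tfrac{2}{\pi}\phi$ on $[0,\tfrac{\pi}{2}]$: substituting $\phi=\tfrac{\pi}{2}-\lvert\theta\rvert$ shows that for every $\kappa>0$,
\[
\int_{-\pi/2}^{\pi/2} e^{-N_k\kappa\cos\theta}\,d\theta
= 2\int_0^{\pi/2} e^{-N_k\kappa\sin\phi}\,d\phi
\le 2\int_0^{\pi/2} e^{-\frac{2N_k\kappa}{\pi}\phi}\,d\phi
\le \frac{\pi}{N_k\kappa}.
\]
Applying this once with $\kappa=\tilde\omega-\tilde z$ and once with $\kappa=\tilde z+\tilde\omega$ bounds the whole expression by a constant multiple of $N_k^{-1}$, which tends to $0$ as $k\to\infty$. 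This establishes $\lim_{k\to\infty}\int_{C_k} g(s)e^{-i\pi s}P_{\nu}^{-s}(\omega)Q_{\nu}^{s}(z)\,ds=0$, and the final assertion follows at once, since such a sequence $(C_k)$ then verifies condition $1)$ of Corollary \ref{corollary:Corollary1Schritt1}.
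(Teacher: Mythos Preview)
Your proof is correct and follows essentially the same route as the paper's: both parametrise the semicircle, invoke the product asymptotic \eqref{equation:AsympLegendreProdKond1}, use the monotonicity of $t\mapsto t/\sqrt{t^2-1}$ to get $\tilde z<\tilde\omega$, and finish with Jordan's inequality $\sin\phi\ge\frac{2}{\pi}\phi$. Your version is in fact slightly more streamlined, applying the Jordan bound directly to the full integral $\int_0^{\pi/2}e^{-N_k\kappa\cos\theta}\,d\theta$, whereas the paper first splits off $[0,\frac{\pi}{2}-\delta]$ (handled by the crude sup bound) and reserves Jordan's inequality for the remaining sliver near $\theta=\frac{\pi}{2}$.
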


\begin{proof}
Let $\tilde{z}, \tilde{\omega}\in (0,\infty)$ be associated with $z, \omega$ as in \eqref{equation:TildeDefinitionen}. Observe that the function 
\begin{align*}
x\mapsto \frac{x}{\left( x^2-1 \right)^{\nicefrac{1}{2}}},\quad x>1,
\end{align*}
is monotonically decreasing on $ \left( 1,\infty \right)$ and thus $\tilde{z}<\tilde{\omega}$.

Let $\left( N_k \right)_{k=1}^{\infty}\subset \left[1,\infty \right)$ be the unbounded and monotonically increasing sequence such that $C_k$ runs from $-iN_k$ to $iN_k$. Let $C_k$ be parametrized as
\begin{align*}
C_k:\left[ -\frac{\pi}{2}, \frac{\pi}{2} \right]\rightarrow \mathbb{C}, \quad \theta\mapsto N_k\cdot e^{i\theta}.
\end{align*}
By assumption, there exists some $A>0$ such that $\vert g(s)\vert\leq A$ for all $s\in \cup_{k=1}^{\infty} C_k([-\frac{\pi}{2}, \frac{\pi}{2}])$. Further, by \eqref{equation:AsympLegendreProdKond1} there exists some $K>0$ such that for all $s \in\mathbb{C}$ with $\Re(s)\geq 0$ and  $\vert s \vert>K$:
\begin{align}
\left\vert e^{-i\pi s}P_{\nu}^{-s}(\omega)Q_{\nu}^{s}(z) \right\vert &\leq \frac{1}{\vert s \vert} \left( \vert e^{-s(\tilde{\omega}-\tilde{z})} \vert + e^{-\pi \Im(\nu)}\cdot \vert  e^{-s\left( \tilde{z}+ \tilde{\omega} \right)} \vert \right) \nonumber \\ &\leq \frac{1+e^{-\pi\Im(\nu)}}{\vert s \vert} \cdot  e^{-\Re(s)(\tilde{\omega}-\tilde{z})}.
\end{align}
Therefore there exist some $q\in\mathbb{N}$ such that for all $k\geq q$:
\begin{align*}
\left\vert \int\limits_{C_k} g(s) e^{-i\pi s}P_{\nu}^{-s}(\omega)Q_{\nu}^{s}(z) ds \right\vert &\leq \left( 1+e^{-\pi\Im(\nu)} \right)A \int\limits_{-\frac{\pi}{2}}^{\frac{\pi}{2}} e^{-N_k\cos(\theta) \left( \tilde{\omega}- \tilde{z} \right)} d\theta \\
&= \left( 1+e^{-\pi\Im(\nu)} \right)2A \int\limits_{0}^{\frac{\pi}{2}} e^{-N_k\cos(\theta) \left( \tilde{\omega}- \tilde{z} \right)} d\theta.
\end{align*}
To finish the proof of this corollary, we will show that the last integral converges to $0$ as $k\rightarrow\infty$. 

Fix some $\delta \in (0,\frac{\pi}{2})$. Then 
\begin{align*}
\int\limits_{0}^{\frac{\pi}{2}} e^{-N_k\cos(\theta) \left( \tilde{\omega}- \tilde{z} \right)} d\theta =  \underbrace{\int\limits_{0}^{\frac{\pi}{2}-\delta} e^{-N_k\cos(\theta) \left( \tilde{\omega}- \tilde{z} \right)} d\theta}_{=:I_1} +  \underbrace{\int\limits_{\frac{\pi}{2}-\delta}^{\frac{\pi}{2}} e^{-N_k\cos(\theta) \left( \tilde{\omega}- \tilde{z} \right)} d\theta}_{=:I_2}.
\end{align*}
The first term converges obviously to $0$, since (recall $\tilde{z}<\tilde{\omega}$)
\begin{align*}
I_1\leq \left( \frac{\pi}{2}-\delta \right) e^{-N_k\cos(\frac{\pi}{2}-\delta) \left( \tilde{\omega}- \tilde{z} \right)}\longrightarrow 0\quad \text{ as } k\rightarrow\infty.
\end{align*}
The second integral converges to $0$ as well. In fact, with a simple substitution we get
\begin{align*}
I_2= \int\limits_{0}^{\delta}e^{-N_k\cos(-\theta+\frac{\pi}{2}) \left( \tilde{\omega}- \tilde{z} \right)} d\theta = \int\limits_{0}^{\delta}e^{-N_k\sin(\theta) \left( \tilde{\omega}- \tilde{z} \right)} d\theta.
\end{align*}
Now observe that $\sin(\theta)$ is concave on $\left[ 0,\frac{\pi}{2} \right]$, and thus $\sin(\theta)\geq \frac{2}{\pi}\theta$ for all $\theta\in \left[ 0,\frac{\pi}{2} \right]$. In particular $e^{-N_k\sin(\theta) \left( \tilde{\omega}- \tilde{z} \right)}\leq e^{-N_k \frac{2}{\pi}\theta \left( \tilde{\omega}- \tilde{z} \right)}$. Hence
\begin{align*}
I_2 \leq \int\limits_{0}^{\delta} e^{-N_k \frac{2}{\pi}\theta \left( \tilde{\omega}- \tilde{z} \right)} d\theta = \frac{\pi}{2 \left( \tilde{\omega}- \tilde{z} \right) N_{k}} \left( 1- e^{-N_k \frac{2}{\pi}\delta \left( \tilde{\omega}- \tilde{z} \right)} \right)\longrightarrow 0 \quad \text{ as } k\rightarrow \infty.
\end{align*}
\end{proof}

To get an overview of the preceding discussion, we summarize the relevant parts in a single theorem.

\begin{theorem}
\label{theorem:TheoremZentralIntegral}
Let $z,\omega\in (1,\infty)$ with $\omega<z$. Let $\nu\in \mathbb{C}$ with $\Re(\nu)>-1$. Suppose that $g$ is an odd and meromorphic function on $\mathbb{C}$, which is complex differentiable at all points on the imaginary axis except possibly with a simple pole at the origin. Suppose further:
\begin{itemize}
\item[$(i)$] Either $g(i\rho) = O\left( \frac{1}{\rho}\right)$ as $\rho \rightarrow \infty$ in $\mathbb{R}$ \emph{(}in which case we will say ``$g$ is of type $I"$\emph{)}, or there exists some constant  $C\in\mathbb{C}$ with $g(i\rho) = C \left( 1+ O\left( \frac{1}{\rho}\right) \right)$ as $\rho \rightarrow \infty$ in $\mathbb{R}$ \emph{(}``$g$ is of type $II"$\emph{)}.
\item[$(ii)$] There exists an unbounded and monotonically increasing sequence $\left( N_{k} \right)_{k\in\mathbb{N}}\subset [1,\infty)$ such that $g$ is bounded on $\cup_{k=1}^{\infty} S^{1}(N_k)$, where  $ S^{1}(N_k)$ denotes the circle of radius $N_k$ centered at the origin.
\end{itemize}
Then
\begin{align}
\label{equation:TheoremZentralIntegral}
&\frac{2}{\pi}\int\limits_{0}^{\infty} \frac{\sin(\pi i \rho)}{\pi} g\left(i\rho\right)Q_{\nu}^{-i\rho}(z)Q_{\nu}^{i\rho}(\omega) d\rho \nonumber \\
= & \lim\limits_{k\rightarrow\infty} \left( 2 \sum\limits_{p\in B^{+}\left( N_k \right)} \emph{Res}\left( g;p \right) e^{-i\pi p} P_{\nu}^{-p}(\omega)Q_{\nu}^{p}(z) \right)  + \emph{Res}\left( g;0 \right) P_{\nu}( \omega )Q_{\nu}(z),
\end{align} 
where $B^{+}\left( N_k \right):=\lbrace z\in\mathbb{C} : \vert z \vert < N_k,\text{ }\Re(z)>0 \rbrace$.

Further, we know that the integral on the left-hand side of \eqref{equation:TheoremZentralIntegral} is absolutely convergent for all $z,\omega\in (1,\infty)$ and $\nu\in\mathbb{C}$ with $\Re(\nu)\notin \{ -1, -2, -3,... \}$, if $g$ is of type $I$. If, instead, $g$ is of type $II$, then this integral is convergent if and only if $z\neq \omega$, and it is never absolutely convergent.
%
\end{theorem}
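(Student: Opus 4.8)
The plan is to assemble the theorem directly from Corollary \ref{corollary:Corollary1Schritt1}, Lemma \ref{lemma:ConditionsSchritt1}, and the (unnamed) lemma immediately preceding it; the whole content of the statement is that hypotheses $(i)$ and $(ii)$ force both conditions $1)$ and $2)$ of the corollary to hold. First I would note that $g$ satisfies every standing assumption on the function in Lemma \ref{lemma:Schritt1}: by hypothesis it is odd, meromorphic on $\mathbb{C}$, and complex differentiable on the imaginary axis except for a possible simple pole at the origin. Hence Corollary \ref{corollary:Corollary1Schritt1} applies as soon as its two conditions are checked, and its conclusion \eqref{equation:Corollary1Schritt1} is exactly \eqref{equation:TheoremZentralIntegral} once the index set of the sum is identified.

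To verify condition $2)$ of the corollary, namely convergence of the integral \eqref{equation:Schritt1.2}, I would invoke Lemma \ref{lemma:ConditionsSchritt1}. If $g$ is of type $I$, part $(i)$ yields absolute convergence, hence convergence. If $g$ is of type $II$, part $(ii)$ yields convergence exactly when $z\neq\omega$; since we assume $\omega<z$, this holds. Thus condition $2)$ is met in both cases.

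To verify condition $1)$, I would take $C_k$ to be the right semicircle $\theta\mapsto N_k e^{i\theta}$, $\theta\in[-\frac{\pi}{2},\frac{\pi}{2}]$, with $(N_k)$ the sequence supplied by hypothesis $(ii)$. The point to check is that these qualify as admissible curves in Lemma \ref{lemma:Schritt1}, i.e. that their non-endpoints lie in $\{\Re(s)>0\}\setminus S$: boundedness of $g$ on each circle $S^1(N_k)$ forbids any pole on $S^1(N_k)$, so the semicircles avoid $S$, while the endpoints $\pm iN_k$ sit on the imaginary axis away from the origin, where $g$ is regular. Since $\bigcup_k C_k\subset\bigcup_k S^1(N_k)$, the boundedness of $g$ on the semicircles is inherited from $(ii)$. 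The unnamed lemma, whose remaining hypotheses $\Re(\nu)>-1$ and $\omega<z$ are assumed here, then gives $\lim_{k\to\infty}\int_{C_k} g(s)e^{-i\pi s}P_{\nu}^{-s}(\omega)Q_{\nu}^{s}(z)\,ds=0$, which is condition $1)$.

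With both conditions in hand, formula \eqref{equation:Corollary1Schritt1} holds, and it remains only to match notation: the bounded region enclosed by the imaginary axis and the right semicircle of radius $N_k$ is exactly the half-disc $B^{+}(N_k)$, so $S(C_k)=S\cap B^{+}(N_k)$ and the sum over $p\in S(C_k)$ becomes the sum over poles in $B^{+}(N_k)$, giving \eqref{equation:TheoremZentralIntegral}. The closing convergence assertions are nothing but a restatement of Lemma \ref{lemma:ConditionsSchritt1} in its full range $\Re(\nu)\notin\{-1,-2,\ldots\}$. I expect no real obstacle, since the theorem merely repackages the earlier results; the only things demanding care are confirming that the circles of hypothesis $(ii)$ genuinely provide the curves $C_k$ of Lemma \ref{lemma:Schritt1} and performing the bookkeeping identification $S(C_k)=S\cap B^{+}(N_k)$.
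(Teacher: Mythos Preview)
Your proposal is correct and mirrors the paper's own treatment: the theorem is presented there explicitly as a summary of the preceding discussion (Corollary \ref{corollary:Corollary1Schritt1}, Lemma \ref{lemma:ConditionsSchritt1}, and the unnamed lemma), with no separate proof given. Your write-up simply spells out the assembly that the paper leaves implicit, including the checks that the semicircles of radius $N_k$ qualify as admissible curves and that $S(C_k)=S\cap B^{+}(N_k)$.
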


With this theorem, we can solve many integrals involving associated Legendre functions. 

\begin{example}
\label{example:LegendreIntegral}
Let $\nu\in \mathbb{C}$ with $\Re(\nu)>-1$ and $\omega,z\in (1,\infty)$ with $\omega<z$. The following examples of functions $g$ satisfy $\vert g(s) \vert = O\left( \frac{1}{\vert s \vert} \right)$ as $\vert s \vert\rightarrow\infty$ with $s\in\mathbb{C}$. In particular, these $g$ will satisfy the above conditions $(i), (ii)$ \emph{(}more precisely, they are of type $I$\emph{)} so we can apply Theorem $\ref{theorem:TheoremZentralIntegral}$ to solve the corresponding integrals. In order to simplify the integrands, we use the identity $\sin(is)=i\sinh(s)$ for $s\in\mathbb{C}$.
\begin{itemize}
\item[$(a)$] If $g(s)=\frac{1}{s}$, then:
\begin{align}
\label{equation:ErstesWichtigesIntegral}
\frac{2}{\pi}\int\limits_{0}^{\infty} \frac{\sinh(\pi \rho)}{\pi} \cdot \frac{1}{ \rho} Q_{\nu}^{-i\rho}(z)Q_{\nu}^{i\rho}(\omega) d\rho = P_{\nu}( \omega )Q_{\nu}(z).
\end{align}
\label{example:LegendreIntegral1}
\item[$(b)$] If $g(s)=\frac{s}{(s-1)(s+1)}=\frac{s}{s^2-1}$, then:
\begin{align*}
\frac{2}{\pi}\int\limits_{0}^{\infty} \frac{\sinh(\pi \rho)}{\pi} \cdot \frac{ \rho}{\rho^2+1} Q_{\nu}^{-i\rho}(z)Q_{\nu}^{i\rho}(\omega) d\rho = -P_{\nu}^{-1}(\omega)Q_{\nu}^{1}(z).
\end{align*}
\item[$(c)$] We can generalize the above examples. Let $\mathcal{H}_{>0}:=\lbrace s\in\mathbb{C} \mid \Re(s)>0 \rbrace$. For any $n\in\mathbb{N}_{0}$ and pairwise different $a_1,...,a_n\in\mathcal{H}_{>0}$, we define the following polynomials:
\begin{align*}
p\left(s \mid a_1,...,a_n\right):= \prod\limits_{i=1}^{n}\left( s-a_i \right)\left( s + a_i \right) = \prod\limits_{i=1}^{n}\left( s^2-a_i^2 \right).
\end{align*}
As usual we define the empty product as $1$, e.g. 
\begin{align*}
\prod_{\substack{i=1\\i\neq 1}}^{1}(...):=1 ,\quad \prod_{i=1}^{0}(...):=1.
\end{align*}
 Thus, if $n=0$, then $p\left(s \mid a_1,...,a_n\right):=1.$

Now let $m,n\in\mathbb{N}_{0}$ be given. We fix pairwise different $b_1,...,b_m\in\mathcal{H}_{>0}$ and pairwise different  $a_1,...,a_n\in\mathcal{H}_{>0}$. Then the following functions satisfy the conditions of Theorem $\ref{theorem:TheoremZentralIntegral}$:
\begin{align*}
g_1(s):=\frac{1}{s}\cdot \frac{p\left( s\mid a_1,...,a_n \right)}{p\left( s\mid b_1,...,b_m \right)}\quad \text{if }\, m\geq n; \quad 
g_2(s):= s \cdot \frac{p\left( s\mid a_1,...,a_n \right)}{p\left( s\mid b_1,...,b_m \right)},\quad \text{if }\, m > n.
\end{align*}
For $g_1$ we obtain the integrals
\begin{align*}
\frac{2}{\pi}\int\limits_{0}^{\infty}& \frac{\sinh(\pi  \rho)}{\pi} \cdot \frac{(-1)^{n+m}}{ \rho}\cdot \left( \frac{\prod\limits_{i=1}^{n}\left( \rho^2+a_i^2\right)}{\prod\limits_{i=1}^{m}\left(\rho^2+b_i^2\right)}\right) Q_{\nu}^{-i\rho}(z)Q_{\nu}^{i\rho}(\omega) d\rho = \\
& \left(\mathlarger{\sum\limits_{k=1}^{m}} \left(\frac{\prod\limits_{i=1}^{n}\left( b_k^2-a_i^2 \right)}{\prod\limits_{\substack{i=1\\i\neq k}}^{m}\left(b_k^2-b_i^2\right) }\right) \frac{e^{-i\pi b_k}}{b_k} P_{\nu}^{-b_k}( \omega )Q_{\nu}^{b_k}(z)\right) + (-1)^{n+m} \left( \frac{\prod\limits_{i=1}^{n}a_i^2}{\prod\limits_{i=1}^{m}b_i^2 }\right)P_{\nu}( \omega )Q_{\nu}(z).
\end{align*}
For $g_2$ we obtain
\begin{align*}
\frac{2}{\pi}\int\limits_{0}^{\infty}& \frac{\sinh(\pi  \rho)}{\pi} (-1)^{n+m} \cdot \rho\cdot \left( \frac{\prod\limits_{i=1}^{n}\left( \rho^2+a_i^2\right)}{\prod\limits_{i=1}^{m}\left(\rho^2+b_i^2\right)}\right) Q_{\nu}^{-i\rho}(z)Q_{\nu}^{i\rho}(\omega) d\rho = \\
& -\left(\mathlarger{\sum\limits_{k=1}^{m}} \left(\frac{\prod\limits_{i=1}^{n}\left( b_k^2-a_i^2 \right)}{\prod\limits_{\substack{i=1\\i\neq k}}^{m}\left(b_k^2-b_i^2\right) }\right) e^{-i\pi b_k}P_{\nu}^{-b_k}( \omega )Q_{\nu}^{b_k}(z)\right).
\end{align*}
\item[$(d)$] Let $c\in\mathbb{C}$ with $\Re(c)>0$, $n\in\mathbb{N}$, $m\in\mathbb{N}_{0}$ such that $2m+1<4n,$ and set $g(s):=\frac{s^{2m+1}}{(s-c)^{2n}(s+c)^{2n}}$. The only poles of $g$ are of order $2n$ at the points $c$ and $-c$ and one can show that:
\begin{align*}
2\cdot \Res(g;c) = 
\begin{cases}
0, \text{ if } 2m+1 < 4n-1 \\
1, \text{ if } 2m+1 = 4n-1.
\end{cases}
\end{align*}
Thus

\begin{align*}
\frac{2}{\pi}\int\limits_{0}^{\infty} \frac{\sinh(\pi \rho)}{\pi} &\cdot \frac{(-1)^{m+1}\rho^{2m+1}}{\left( \rho^2+c^2 \right)^{2n}} Q_{\nu}^{-i\rho}(z)Q_{\nu}^{i\rho}(\omega) d\rho = \nonumber \\
&= \begin{cases}
0, &\text{if } 2m+1 < 4n-1 \\
e^{-i\pi c}P_{\nu}^{-c}( \omega )Q_{\nu}^{c}(z), &\text{if } 2m+1 = 4n-1.
\end{cases}
\end{align*}
\end{itemize}
\end{example}

%
\begin{remark}
One can show that, equation \eqref{equation:ErstesWichtigesIntegral} is also valid if $\omega = z$ and $\nu\in (-1,\infty)$ (see \cite[Lemma 3.13 $(ii)$]{ErenDiss}).
\end{remark}

We want to stress a special case of the previous theorem by the following corollary. 
\begin{corollary}
\label{corollary:Corollary1Schritt2}
Let $\nu\in \mathbb{C}$ with $\Re(\nu)>-1$ and $\omega,z\in (1,\infty)$ with $\omega<z$. Let $f:\mathbb{C}\rightarrow\mathbb{C}$ be an entire and even function, i.e. $f(s)=f(-s)$ for all $s\in\mathbb{C}$. 
Consider the function 
\begin{align*}
g(s):=\frac{\pi}{\sin(\pi s)}\cdot f(s),
\end{align*}
and the sequence $(N_k)_{k\in\mathbb{N}}$ with $N_k:=k+\frac{1}{2}$. 
Suppose that the conditions $(i)$ and $(ii)$ of Theorem $\ref{theorem:TheoremZentralIntegral}$ are satisfied with this particular $g$ and sequence $\left(N_k\right)_{k\in\mathbb{N}}$. Then we have
\begin{align}
\label{eqiaton:Corollary2Schritt1}
\frac{2}{\pi}\int\limits_{0}^{\infty} f\left(i\rho\right)Q_{\nu}^{-i\rho}(z)Q_{\nu}^{i\rho}(\omega) d\rho = 2 \left( \sum\limits_{m=1}^{\infty} f(m) P_{\nu}^{-m}(\omega)Q_{\nu}^{m}(z) \right)  + f(0) P_{\nu}( \omega )Q_{\nu}(z).
\end{align} 
\end{corollary}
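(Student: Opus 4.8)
The plan is to obtain \eqref{eqiaton:Corollary2Schritt1} as a direct specialization of Theorem \ref{theorem:TheoremZentralIntegral} to $g(s)=\frac{\pi}{\sin(\pi s)}f(s)$, so that the real work consists only in checking the standing hypotheses of that theorem and then evaluating the residues and simplifying both sides of \eqref{equation:TheoremZentralIntegral}. First I would record the structural properties of $g$ that are not already granted by the assumption that conditions $(i)$ and $(ii)$ hold. Since $f$ is even and $\sin(\pi s)$ is odd, we get $g(-s)=\frac{\pi}{\sin(-\pi s)}f(-s)=-g(s)$, so $g$ is odd. As $f$ is entire, the only poles of $g$ are the (simple) zeros of $\sin(\pi s)$, i.e.\ the integers at which $f$ does not vanish, and these poles are simple. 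On the imaginary axis $\sin(\pi i\rho)=i\sinh(\pi\rho)$ vanishes only at $\rho=0$; hence $g$ is holomorphic at every nonzero point of the imaginary axis and has at worst a simple pole at the origin, exactly as Theorem \ref{theorem:TheoremZentralIntegral} requires.

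Next I would compute the two residues occurring in \eqref{equation:TheoremZentralIntegral}. At the origin,
\begin{align*}
\Res(g;0)=\lim_{s\to 0}\frac{\pi s}{\sin(\pi s)}\,f(s)=f(0),
\end{align*}
and at a positive integer $m$,
\begin{align*}
\Res(g;m)=\frac{\pi}{\pi\cos(\pi m)}\,f(m)=(-1)^m f(m).
\end{align*}

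Then I would substitute into Theorem \ref{theorem:TheoremZentralIntegral}. On the left-hand side the factor $\tfrac{\sin(\pi i\rho)}{\pi}$ cancels the $\tfrac{\pi}{\sin(\pi i\rho)}$ inside $g(i\rho)$, leaving exactly $f(i\rho)$, so the integral becomes the left-hand side of \eqref{eqiaton:Corollary2Schritt1}. On the right-hand side, the choice $N_k=k+\tfrac12$ keeps the circles $S^1(N_k)$ off the integer poles, and $B^{+}(N_k)=\{s:|s|<N_k,\ \Re(s)>0\}$ contains precisely the poles among $1,2,\dots,k$ (any index with $f(m)=0$ contributes a zero residue and may be kept or dropped freely). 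Using $e^{-i\pi m}=(-1)^m$ for integer $m$, each summand simplifies as
\begin{align*}
\Res(g;m)\,e^{-i\pi m}P_{\nu}^{-m}(\omega)Q_{\nu}^{m}(z)=(-1)^m f(m)(-1)^m P_{\nu}^{-m}(\omega)Q_{\nu}^{m}(z)=f(m)P_{\nu}^{-m}(\omega)Q_{\nu}^{m}(z).
\end{align*}
Letting $k\to\infty$ turns the finite sum over $B^{+}(N_k)$ into $\sum_{m=1}^{\infty}f(m)P_{\nu}^{-m}(\omega)Q_{\nu}^{m}(z)$, while the separate origin term yields $f(0)P_{\nu}(\omega)Q_{\nu}(z)$; together these give \eqref{eqiaton:Corollary2Schritt1}.

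The argument carries no genuine analytic difficulty beyond what Theorem \ref{theorem:TheoremZentralIntegral} already supplies; the only points that need care are bookkeeping ones. I would double-check the two sign cancellations — that the residue sign $(-1)^m$ of $\pi/\sin(\pi s)$ exactly cancels the factor $e^{-i\pi m}=(-1)^m$ — and verify the geometric matching, namely that with $N_k=k+\tfrac12$ the discs $B^{+}(N_k)$ exhaust the positive integers as $k\to\infty$, so that the limit of the partial sums is the asserted series. Convergence of both the series and the integral is then guaranteed by the assumed conditions $(i)$, $(ii)$ together with the final convergence statement of Theorem \ref{theorem:TheoremZentralIntegral}.
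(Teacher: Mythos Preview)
Your proposal is correct and follows essentially the same approach as the paper: verify that $g(s)=\frac{\pi}{\sin(\pi s)}f(s)$ is odd and meromorphic with only simple poles at the integers, compute $\Res(g;p)=(-1)^p f(p)$, and plug into Theorem~\ref{theorem:TheoremZentralIntegral}, using $e^{-i\pi m}=(-1)^m$ to cancel the sign. Your write-up is simply more explicit than the paper's (which compresses all of this into three sentences), in particular about the cancellation $\frac{\sin(\pi i\rho)}{\pi}\cdot\frac{\pi}{\sin(\pi i\rho)}=1$ on the left and about the discs $B^{+}(N_k)$ exhausting the positive integers on the right.
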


\begin{proof}
Observe that $s\mapsto \frac{\pi}{\sin(\pi s)}$ is an odd function and meromorphic on $\mathbb{C}$. The only singularities are simple poles at all integer points $p\in\mathbb{Z}$ with residue $\Res\left( s\mapsto \frac{\pi}{\sin(\pi s)};p \right)=\left(-1\right)^{p}$. Thus $g$ satisfies the assumptions of Theorem \ref{theorem:TheoremZentralIntegral} with $\Res\left( g;p \right)=\left( -1 \right)^p f(p)$. Therefore \eqref{eqiaton:Corollary2Schritt1} follows when we apply \eqref{equation:TheoremZentralIntegral} to this case.
\end{proof}

Let us apply Corollary \ref{corollary:Corollary1Schritt2} to solve an important integral, which will reappear in the next chapter.

\begin{example}
\label{example:IntegralFuerGreenSumme}
Let $\nu\in \mathbb{C}$ with $\Re(\nu)>-1$ and $\omega,z\in (1,\infty)$ with $\omega<z$. Further, let $f(s):=\cos\left( s\cdot \left( \pi-\theta \right) \right)$, with $\theta\in\left[ 0, 2\pi \right)$. We set $g(s):=\frac{\pi}{\sin(\pi s)}\cdot f(s)$ and $N_k:=k+\frac{1}{2}$ as in Corollary \emph{\ref{corollary:Corollary1Schritt2}}. We first check if the conditions $(i)$ and $(ii)$ of Theorem \emph{\ref{theorem:TheoremZentralIntegral}} are satisfied. 

Note that for $\rho\in\mathbb{R}$ we have
\begin{align*}
g(i\rho) = \frac{\pi}{i} \cdot \frac{\cosh((\pi-\theta)\rho)}{\sinh(\pi\rho)} = \frac{\pi}{i} \cdot \frac{e^{-\theta \rho }+ e^{-(2\pi-\theta)\rho}}{1-e^{-2\pi\rho}}.
\end{align*}
If $\theta\in (0,2\pi)$ then this is in $O(e^{-\alpha\rho})$ as $\rho\rightarrow\infty$ with $\alpha:=\min\lbrace \theta, 2\pi-\theta \rbrace>0$. In particular, it is in $O\left( \frac{1}{\rho} \right)$ as $\rho\rightarrow\infty$, so $g$ is of type I. If $\theta = 0$, then 
\begin{align*}
g(i\rho) = \frac{\pi}{i}\cdot \frac{1 + e^{-2\pi\rho}}{1-e^{-2\pi\rho}} = \frac{\pi}{i} \cdot \left( 1+ \frac{2e^{-2\pi \rho}}{1-e^{-2\pi\rho}} \right).
\end{align*}
In particular, $g$ is of type II \emph{(}with $C:=\frac{\pi}{i}$ in Theorem \emph{\ref{theorem:TheoremZentralIntegral}} $(i)$\emph{)}. Thus, condition $(i)$ of Theorem \emph{\ref{theorem:TheoremZentralIntegral}} is satisfied.

Note that $g$ is bounded on the set $\cup_{k=1}^{\infty} S^1(N_k)$ if and only if $g^2$ is bounded on this set. Furthermore, if $x:=\Re(s),\, y:=\Im(s)$ so that $s=x+iy$, then a straightforward calculation shows:
\begin{align*}
\vert g(s) \vert^2 =\pi^2 \cdot \frac{\vert \cos((\pi-\theta)(x+iy))\vert^2}{\vert \sin(\pi(x+iy))\vert^2} = \pi^2 \cdot \frac{\cosh(2(\pi-\theta)y) + \cos(2(\pi-\theta)x)}{\cosh(2 \pi y) - \cos(2 \pi x)}.
\end{align*}
Let $y_0>0$ be such that 
\begin{align*}
\left\lbrace \, s\in S^1(N_k) \, \Big| \, \vert \Im(s) \vert\leq y_0 \,\right\rbrace \subset \left\lbrace  s\in S^1(N_k) \, \Big| \, \vert \Re(s) \vert \in \left[ k+\frac{1}{3},k+\frac{1}{2}  \right]  \right\rbrace
\end{align*}
for all $k\in\mathbb{N}$ \emph{(}or equivalently, for $k=1$\emph{)}. Then on $\cup_{k=1}^{\infty} \{\, s\in S^1(N_k) \mid \vert \Im(s) \vert\leq y_0  \,\}$ we have $\cos(2\pi x)\in  \left[ -1, -\frac{1}{2}\right]$ and thus
\begin{align*}
\vert g(s) \vert^2 \leq 2\pi^2 \cdot \frac{\cosh(2(\pi-\theta)y)}{\cosh(2\pi y)}\leq 2\pi^2.
\end{align*}
On $\{\, s\in\mathbb{C} \mid \vert \Im(s) \vert\geq y_0 \,\}$ we have
\begin{align*}
\vert g(s) \vert^2 \leq \pi^2 \cdot \frac{\cosh(2(\pi-\theta)y) + 1}{\cosh(2 \pi y) - 1}.
\end{align*}
Since $y\mapsto \pi^2 \cdot \frac{\cosh(2(\pi-\theta)y) + 1}{\cosh(2 \pi y) - 1}$ has a limit for $y\rightarrow\infty$ it is bounded on $[y_0,\infty)$. We conclude that $g^2$ is bounded on $\{\, s\in\mathbb{C} \mid \vert \Im(s) \vert\geq y_0 \,\}$, too. In particular, condition $(ii)$ of Theorem \emph{\ref{theorem:TheoremZentralIntegral}} is satisfied by the function $g$ with the above $N_k$. Therefore, by Corollary \emph{\ref{corollary:Corollary1Schritt2}} we get:
\begin{align*}
\frac{2}{\pi}\int\limits_{0}^{\infty} \cosh\left(\rho \left( \pi-\theta  \right) \right)Q_{\nu}^{-i\rho}(z)  Q_{\nu}^{i\rho}(\omega) d\rho  &= 2 \left( \sum\limits_{m=1}^{\infty} \cos\left(m \left( \pi-\theta  \right) \right) P_{\nu}^{-m}(\omega)Q_{\nu}^{m}(z) \right) +  P_{\nu}( \omega )Q_{\nu}(z)  \\
&= 2 \left( \sum\limits_{m=1}^{\infty} (-1)^m\cos\left(m \theta  \right) P_{\nu}^{-m}(\omega)Q_{\nu}^{m}(z) \right) +  P_{\nu}( \omega )Q_{\nu}(z)  \\
&=Q_{\nu}\left( \omega z -\sqrt{\omega^2-1} \sqrt{z^2-1}\cos\left( \theta \right) \right),
\end{align*}
where the last equality is a classical addition formula \emph{(}see e.g. \emph{\cite[$8.795\text{ } 2$]{Gradshteyn}}\emph{)}.
\end{example}

In the above example we needed to assume $\omega<z$ in order to apply Corollary \ref{corollary:Corollary1Schritt2}, but it turns out that the above formula is also valid for more general values of $\omega$ and $z$. Before we generalize that formula, we will prove some estimates in the following lemma, which will also be useful later.

\begin{lemma}
\label{lemma:EstimateProductLegendreFunctions}
Let $\omega, z\in (1,\infty)$ and $\rho\in [0,\infty)$ be arbitrary. 
\begin{itemize}
\item[$(i)$]
For all $\nu\in\mathbb{C}$ with $\Re(\nu)\geq -\frac{1}{2} $:
\begin{align}
\label{equation:EstimateProductLegendreFunctions}
\vert  Q_{\nu}^{-i\rho}(z)Q_{\nu}^{i\rho}(\omega)\vert \leq \frac{\pi^4}{\sqrt{z-1} \cdot \sqrt{\omega-1}} \cdot \frac{\left( \vert \nu \vert + 1 +\rho\right)^{2\vert \nu \vert +1}}{\vert \Gamma(\nu+1)\vert^2}\cdot e^{-\pi\rho}.
\end{align}
\item[$(ii)$] For all $\nu\in\mathbb{C}$ with $\Re(\nu)\geq 0 $:
\begin{align}
\label{equation:EstimateProductLegendreFunctionsForSmallValuesOfZ}
\vert  Q_{\nu}^{-i\rho}(z)Q_{\nu}^{i\rho}(\omega)\vert \leq \ln\left( \frac{z+1}{z-1} \right)\ln\left( \frac{\omega+1}{\omega-1} \right) \cdot \frac{\pi^2 \left( \vert \nu \vert + 1 +\rho\right)^{2\vert \nu \vert +1}}{\vert \Gamma(\nu+1)\vert^2}\cdot e^{-\pi\rho}.
\end{align}
\item[$(iii)$] For all $\nu\in\mathbb{C}$ with $-1<\Re(\nu)<-\frac{1}{2}$:
\begin{align}
\label{equation:EstimateProductLegendreFunctionsGreaterRange}
\vert  Q_{\nu}^{-i\rho}(z)Q_{\nu}^{i\rho}(\omega)\vert \leq \frac{\pi^3 e^{\frac{1}{3(\Re(\nu)+1)}} }{\vert \Gamma(\nu+1)\vert^2 \cdot ((z-1)(\omega-1))^{\Re(\nu)+1} (\Re(\nu)+1)^{1-2\Re(\nu)} }\, e^{-\pi\rho}.
\end{align}
\end{itemize}
\end{lemma}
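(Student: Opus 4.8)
The plan is to reduce the product $Q_\nu^{-i\rho}(z)Q_\nu^{i\rho}(\omega)$ to a pair of elementary real integrals by means of a classical integral representation of the associated Legendre function of the second kind on the cut $(1,\infty)$. Writing $z=\cosh\alpha$ and $\omega=\cosh\beta$ with $\alpha,\beta>0$, I would start from the representation
\[
Q_\nu^\mu(\cosh\alpha)=e^{i\mu\pi}\frac{\Gamma(\nu+\mu+1)}{\Gamma(\nu+1)}\int_0^\infty\frac{\cosh(\mu t)}{(\cosh\alpha+\sinh\alpha\cosh t)^{\nu+1}}\,dt,
\]
valid for $\Re(\nu+\mu+1)>0$, which holds here since $\Re(\nu)>-1$ and $\mu=\pm i\rho$. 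Specialising $\mu=\pm i\rho$ turns $\cosh(\mu t)$ into $\cos(\rho t)$, and the phase factors $e^{i(-i\rho)\pi}=e^{\pi\rho}$ and $e^{i(i\rho)\pi}=e^{-\pi\rho}$ cancel in the product. Taking absolute values, using $|\cos(\rho t)|\le1$ and $|(\cosh\alpha+\sinh\alpha\cosh t)^{\nu+1}|=(\cosh\alpha+\sinh\alpha\cosh t)^{\Re(\nu)+1}$ (the base is real and $\ge1$), I obtain the master inequality
\[
\bigl|Q_\nu^{-i\rho}(z)Q_\nu^{i\rho}(\omega)\bigr|\le\frac{|\Gamma(\nu+1+i\rho)\,\Gamma(\nu+1-i\rho)|}{|\Gamma(\nu+1)|^2}\,J_\alpha\,J_\beta,\qquad J_\alpha:=\int_0^\infty\frac{dt}{(\cosh\alpha+\sinh\alpha\cosh t)^{\Re(\nu)+1}}.
\]
From here the three parts differ only in how $J_\alpha$ (resp. $J_\beta$) is bounded, while the gamma quotient is treated uniformly.

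For the integral the two basic facts are that the base is $\ge1$ (so raising the exponent only decreases the integrand) and that $\sinh\alpha\ge\cosh\alpha-1=z-1$. In part $(i)$, where $s:=\Re(\nu)+1\ge\tfrac12$, I would lower the exponent to $\tfrac12$ and use $\cosh\alpha+\sinh\alpha\cosh t\ge\sinh\alpha\cosh t$ to get $J_\alpha\le(\sinh\alpha)^{-1/2}\!\int_0^\infty(\cosh t)^{-1/2}dt\le C_0\,(z-1)^{-1/2}$ with $C_0=\int_0^\infty(\cosh t)^{-1/2}dt$. In part $(ii)$, where $s\ge1$, lowering the exponent to $1$ gives exactly $J_\alpha\le\int_0^\infty(\cosh\alpha+\sinh\alpha\cosh t)^{-1}dt=Q_0(\cosh\alpha)=\tfrac12\ln\tfrac{z+1}{z-1}$. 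In part $(iii)$, where $s\in(0,\tfrac12)$, I keep the exponent and estimate $J_\alpha\le(\sinh\alpha)^{-s}\!\int_0^\infty(\cosh t)^{-s}dt\le(z-1)^{-s}\cdot\tfrac{\sqrt2}{s}$, using $\cosh t\ge\tfrac12e^t$ to bound the $s$-dependent constant by $\sqrt2/s=\sqrt2/(\Re(\nu)+1)$; squaring produces the factor $(\Re(\nu)+1)^{-2}$ that appears in that part.

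The essential step is the uniform estimate of the gamma quotient, and this is where I expect the real work to lie. I would insert the explicit Stirling formula $\Gamma(w)=\sqrt{2\pi}\,w^{w-1/2}e^{-w}e^{J(w)}$, with the Binet-type remainder bound $|J(w)|\le\tfrac{1}{6\Re(w)}$ for $\Re(w)>0$, into $w_\pm:=\nu+1\pm i\rho$. With $a=\Re(\nu)+1>0$ and $b=\Im(\nu)$, the modulus of the main term equals $\sqrt{2\pi}\,|w_\pm|^{a-1/2}e^{-a}e^{-(\Im w_\pm)\arg w_\pm}$, and the two remainders contribute at most $e^{1/(3a)}$. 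The decisive computation is the argument exponent: since $h(X):=-|X|\arctan\tfrac{|X|}{a}\le-\tfrac\pi2|X|+a$ and $|b+\rho|+|b-\rho|\ge2\rho$, one gets $-\bigl[(b+\rho)\arg w_++(b-\rho)\arg w_-\bigr]\le-\pi\rho+2a$, and this $+2a$ exactly cancels the $e^{-2a}$ from $e^{-w_+}e^{-w_-}$, leaving the clean decay $e^{-\pi\rho}$ uniformly for all $\rho\ge0$. Thus $|\Gamma(w_+)\Gamma(w_-)|\le2\pi\,e^{1/(3a)}\,|w_+|^{a-1/2}|w_-|^{a-1/2}\,e^{-\pi\rho}$.

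Finally I would dispose of the power $|w_+|^{a-1/2}|w_-|^{a-1/2}$ according to the sign of $a-\tfrac12$. When $\Re(\nu)\ge-\tfrac12$ (parts $(i)$, $(ii)$) the exponent is $\ge0$, so $|w_\pm|\le|\nu|+1+\rho$ yields $(|\nu|+1+\rho)^{2a-1}\le(|\nu|+1+\rho)^{2|\nu|+1}$; when $-1<\Re(\nu)<-\tfrac12$ (part $(iii)$) the exponent is negative, so $|w_\pm|\ge\Re(w_\pm)=a$ yields $a^{2a-1}=(\Re(\nu)+1)^{2\Re(\nu)+1}$, which combined with the integral's $(\Re(\nu)+1)^{-2}$ produces the stated $(\Re(\nu)+1)^{-(1-2\Re(\nu))}$ and leaves $e^{1/(3(\Re(\nu)+1))}$ explicit (it cannot be absorbed into a constant as $\Re(\nu)\to-1$). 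Multiplying the gamma bound by $J_\alpha J_\beta$ and collecting the deliberately generous numerical constants into $\pi^4$, $\pi^2$, $\pi^3$ respectively finishes the three estimates. The main obstacle is genuinely the gamma quotient: extracting precisely $e^{-\pi\rho}$, rather than $e^{-\pi\rho+O(1)}$, from a product of two gamma values whose imaginary parts are not symmetric when $\Im(\nu)\ne0$, uniformly down to $\rho=0$ and with every constant kept explicit.
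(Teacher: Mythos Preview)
Your argument is correct and follows the same architecture as the paper's proof: write $Q_\nu^{\pm i\rho}$ via an integral representation so that the product splits into the gamma quotient $|\Gamma(\nu+1+i\rho)\Gamma(\nu+1-i\rho)|/|\Gamma(\nu+1)|^2$ times two real integrals, then bound the gamma part by a Stirling-type inequality (extracting exactly $e^{-\pi\rho}$) and the integrals case by case.

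The only substantive difference is the choice of integral representation. The paper uses the finite-range formula
\[
Q_\nu^\mu(z)=\frac{e^{i\mu\pi}\,\Gamma(\nu+\mu+1)}{2^{\nu+1}\Gamma(\nu+1)(z^2-1)^{\mu/2}}\int_0^\pi\frac{(\sin t)^{2\nu+1}}{(z+\cos t)^{\nu-\mu+1}}\,dt,
\]
so that after cancellation the integrals to be estimated are $\int_0^\pi (\sin t)^{2\Re(\nu)+1}(z+\cos t)^{-\Re(\nu)-1}\,dt$; these are then manipulated algebraically (factoring $1-\cos^2 t$) in three slightly different ways for (i)--(iii). You instead use the infinite-range Heine-type representation, giving $J_\alpha=\int_0^\infty(\cosh\alpha+\sinh\alpha\cosh t)^{-\Re(\nu)-1}\,dt$, which you bound by lowering the exponent and using $\sinh\alpha\ge z-1$. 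Both routes produce the same $z,\omega$-dependence in each case; your version is arguably a little cleaner in (ii), where lowering the exponent to $1$ hands you $Q_0(\cosh\alpha)=\tfrac12\ln\tfrac{z+1}{z-1}$ directly. For the gamma factor, the paper simply cites the explicit bound $|\Gamma(w)|\le\sqrt{2\pi}\,|w|^{\Re(w)-1/2}e^{-\frac{\pi}{2}|\Im(w)|}e^{1/(6|w|)}$ (NIST 5.6.9), while you rederive it from Stirling with Binet remainder and the inequality $|X|\bigl(\tfrac{\pi}{2}-\arctan\tfrac{|X|}{a}\bigr)\le a$; the two are equivalent and your derivation of the clean $e^{-\pi\rho}$ (via $|b+\rho|+|b-\rho|\ge 2\rho$) is exactly what the paper's one-line estimate $e^{-\frac{\pi}{2}(|\Im(\nu)+\rho|+|\Im(\nu)-\rho|)}\le e^{-\pi\rho}$ encodes.
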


\begin{proof}
Let $\omega, z\in (1,\infty)$, $\rho\in [0,\infty)$, and $\nu\in\mathbb{C}$ with $\Re(\nu)> -1 $ be given. From \cite[formula $(5)$ on p. $155$]{Erdelyi} we know that for all $\mu \in\mathbb{C}$ with $\Re(\mu)\geq 0$:
\begin{align}
\label{equation:RepresentationLegendreQIntegral}
 Q_{\nu}^{\mu}(z) = e^{\mu\pi i}\cdot \frac{1}{2^{\nu+1}}\cdot \frac{\Gamma(\nu + 1 +\mu)}{\Gamma(\nu + 1)} \cdot \frac{1}{(z^2-1)^{\frac{\mu}{2}}} \cdot \int\limits_{0}^{\pi} \frac{\sin(t)^{2\nu+1}}{(z+\cos(t))^{-\mu+\nu+1}}\, dt.
\end{align}
Hence,
\begin{align}
\label{equation:AbschaetzungProduktLegendre1}
\vert Q_{\nu}^{-i\rho}(z)  Q_{\nu}^{i\rho}(\omega) \vert \leq & \frac{1}{4^{\Re(\nu)+1}}\cdot \frac{\vert \Gamma(\nu + 1 + i\rho) \Gamma(\nu + 1 - i\rho)\vert}{\vert \Gamma(\nu + 1)\vert^2 }  \cdot \int\limits_{0}^{\pi} \frac{\sin(t)^{2\Re(\nu)+1}}{(z+\cos(t))^{\Re(\nu)+1}}\, dt \, \cdot   \nonumber \\
& \cdot \int\limits_{0}^{\pi} \frac{\sin(t)^{2\Re(\nu)+1}}{(\omega+\cos(t))^{\Re(\nu)+1}}\, dt.
\end{align}

$(i)$. Suppose $\Re(\nu)\geq -\frac{1}{2}$. Then we can rewrite the integrals appearing above as follows:
\begin{align}
\int\limits_{0}^{\pi} \frac{\sin(t)^{2\Re(\nu)+1}}{(z+\cos(t))^{\Re(\nu)+1}}\, dt &= \int\limits_{0}^{\pi} \left(\frac{1-\cos^2(t)}{z+\cos(t)}\right)^{\Re(\nu)+\frac{1}{2}} \cdot \frac{1}{\sqrt{z+\cos(t)}}\, dt \nonumber \\
&=\int\limits_{0}^{\pi}\left( \frac{1+\cos(t)}{z+\cos(t)}\right)^{\Re(\nu)+\frac{1}{2}}  \cdot \frac{ \left( 1-\cos(t) \right)^{\Re(\nu)+\frac{1}{2}}}{\sqrt{z+\cos(t)}}\, dt. \label{equation:AbschaetzungProduktLegendreZwischenschritt}
\end{align}
Using the estimates $0\leq \frac{1+\cos(t)}{z+\cos(t)}<1$, $0\leq 1-\cos(t)\leq 2$, and $0< \frac{1}{\sqrt{z+\cos(t)}}\leq\frac{1}{\sqrt{z-1}}$ for all $t\in [0,\pi]$, we obtain from \eqref{equation:AbschaetzungProduktLegendreZwischenschritt}:
\begin{align}
\label{equation:AbschaetzungProduktLegendre2}
\int\limits_{0}^{\pi} \frac{\sin(t)^{2\Re(\nu)+1}}{(z+\cos(t))^{\Re(\nu)+1}}\, dt \, \leq \, 2^{\Re(\nu)+\frac{1}{2}} \cdot \frac{\pi}{\sqrt{z-1}}.
\end{align}
Hence, from \eqref{equation:AbschaetzungProduktLegendre1} and \eqref{equation:AbschaetzungProduktLegendre2} we get
\begin{align}
\label{equation:EstimateProductLegendreFunctionsBeweisZwischenschritt}
\vert  Q_{\nu}^{-i\rho}(z)Q_{\nu}^{i\rho}(\omega)\vert \leq \frac{\pi^2}{\sqrt{z-1} \cdot \sqrt{\omega-1}} \cdot \frac{\vert \Gamma(\nu+1+i\rho)\Gamma(\nu+1-i\rho) \vert}{2\cdot \vert \Gamma(\nu+1)\vert^2}.
\end{align}

%


Moreover, it is well-known (see \cite[formula $5.6.9$ on p. 138]{NIST}) that for all $z\in \mathbb{C}$ with $\Re(z) > 0$:
\begin{align}
\label{equation:GammaFunctionEstimateGeneral}
\vert \Gamma(z) \vert \leq \sqrt{2\pi} \cdot \vert z \vert ^{\Re(z)-\frac{1}{2}} \cdot e^{-\frac{\pi}{2}\vert \Im(z) \vert} \cdot e^{\frac{1}{6 \vert z \vert}}.
\end{align} 
Thus, 
\begin{align}
\label{equation:GammaImaginaryEstimate}
\vert \Gamma(\nu+1+i\rho)\Gamma(\nu+1-i\rho) \vert &\leq  2\pi e^{\frac{2}{3}}\cdot \left( \vert \nu \vert + 1 + \rho \right)^{2\Re(\nu)+1}\cdot e^{-\frac{\pi}{2}(\vert \Im(\nu) + \rho \vert + \vert \Im(\nu) - \rho \vert)} \nonumber \\
&\leq  2\pi^2 \cdot \left( \vert \nu \vert + 1 + \rho \right)^{2\vert \nu \vert+1} \cdot e^{-\pi\rho}.
\end{align}
Hence, $(i)$ follows from \eqref{equation:GammaImaginaryEstimate} and \eqref{equation:EstimateProductLegendreFunctionsBeweisZwischenschritt}.

$(ii).$ Suppose $\Re(\nu)\geq 0$. Similar as above, we can estimate the integrals on the right-hand side of \eqref{equation:AbschaetzungProduktLegendre1} as follows:
\begin{align*}
\int\limits_{0}^{\pi} \frac{\sin(t)^{2\Re(\nu)+1}}{(z+\cos(t))^{\Re(\nu)+1}}\, dt \, & = \int\limits_{0}^{\pi} \left( \frac{1+\cos(t)}{z+\cos(t)}\right)^{\Re(\nu)} \cdot (1-\cos(t))^{\Re(\nu)}\cdot \frac{\sin(t)}{z+\cos(t)} \, dt \\
& \leq 2^{\Re(\nu)} \int\limits_{0}^{\pi} \frac{\sin(t)}{z+\cos(t)} \, dt = 2^{\Re(\nu)} \ln\left(\frac{z+1}{z-1}\right).
\end{align*}
Hence, $(ii)$ follows from the above estimate combined with \eqref{equation:AbschaetzungProduktLegendre1} and \eqref{equation:GammaImaginaryEstimate}.

$(iii)$. Suppose $-1<\Re(\nu)<-\frac{1}{2}$, and thus $-1<2\Re(\nu)+1<0$. In that case, we estimate the integrals on the right-hand side of \eqref{equation:AbschaetzungProduktLegendre1} as follows:
\begin{align*}
\int\limits_{0}^{\pi} \frac{\sin(t)^{2\Re(\nu)+1}}{(z+\cos(t))^{\Re(\nu)+1}}\, dt \, &\leq \frac{1}{(z-1)^{\Re(\nu)+1}}\int_{0}^{\pi}\sin(t)^{2\Re(\nu)+1} \, dt  \\
&\, =\frac{2}{(z-1)^{\Re(\nu)+1}}\int_{0}^{\frac{\pi}{2}}\sin(t)^{2\Re(\nu)+1} \, dt.
\end{align*}
Note that $[0,\frac{\pi}{2}]\ni t\mapsto \sin(t)\in\mathbb{R}$ is concave and thus $\sin(t)\geq \frac{2}{\pi}t$. Hence, from the above estimate, we obtain
\begin{align*}
\int\limits_{0}^{\pi} \frac{\sin(t)^{2\Re(\nu)+1}}{(z+\cos(t))^{\Re(\nu)+1}}\, dt \,&\,\leq  \frac{2}{(z-1)^{\Re(\nu)+1}}\int_{0}^{\frac{\pi}{2}} \left(\frac{2}{\pi}t\right)^{2\Re(\nu)+1} \, dt \\
&\, =  \frac{\pi}{2\cdot (z-1)^{\Re(\nu)+1} (\Re(\nu)+1)}.
\end{align*}
Using that upper bound and the estimate \eqref{equation:AbschaetzungProduktLegendre1}  we have 
\begin{align*}
\vert  Q_{\nu}^{-i\rho}(z)Q_{\nu}^{i\rho}(\omega)\vert \leq \frac{\pi^2}{2(z-1)^{\Re(\nu)+1} (\omega-1)^{\Re(\nu)+1} (\Re(\nu)+1)^2} \cdot \frac{\vert\Gamma(\nu+1+i\rho)\Gamma(\nu+1-i\rho) \vert }{\vert \Gamma(\nu+1)\vert^2 }.
\end{align*}
Furthermore, from \eqref{equation:GammaFunctionEstimateGeneral} we obtain
\begin{align*}
\vert \Gamma(\nu+1+i\rho)\Gamma(\nu+1-i\rho) \vert \leq (2\pi) e^{\frac{1}{3(\Re(\nu)+1)}} (\Re(\nu)+1)^{2\Re(\nu)+1} e^{-\pi\rho},
\end{align*}
and therefore
\begin{align*}
\vert  Q_{\nu}^{-i\rho}(z)Q_{\nu}^{i\rho}(\omega)\vert &\leq \frac{\pi^3 e^{\frac{1}{3(\Re(\nu)+1)}}}{\vert \Gamma(\nu+1)\vert^2 (z-1)^{\Re(\nu)+1} (\omega-1)^{\Re(\nu)+1} (\Re(\nu)+1)^{1-2\Re(\nu)} }\cdot e^{-\pi \rho}.
\end{align*}
\end{proof}

\begin{corollary}
\label{corollary:IntegralProduktLegendreFuerGreensFunction}
Let $\nu\in \mathbb{C}$ with $\Re(\nu)>-1$, $\theta\in [0,2\pi)$ and $\omega,z\in (1,\infty)$ such that $\omega\neq z$. Then
\begin{align}
\label{example:IntegralFuerGreen}
\frac{2}{\pi}\int\limits_{0}^{\infty} \cosh\left(\rho \left( \pi-\theta  \right) \right)Q_{\nu}^{-i\rho}(z)Q_{\nu}^{i\rho}(\omega) d\rho = Q_{\nu}\left( \omega z -\sqrt{\omega^2-1} \sqrt{z^2-1}\cos\left( \theta \right) \right).
\end{align}
Furthermore, the above equality holds for all $\theta\in (0,2\pi)$ and $\omega=z\in (1,\infty)$.
\end{corollary}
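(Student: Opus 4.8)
The plan is to bootstrap the identity from Example~\ref{example:IntegralFuerGreenSumme}, which already establishes \eqref{example:IntegralFuerGreen} under the extra hypothesis $\omega<z$, by adding two ingredients: a symmetry argument that removes the ordering, and a continuity argument that reaches the diagonal $\omega=z$. First I would observe that the whole statement is symmetric under $z\leftrightarrow\omega$. Indeed, by \eqref{equation:SymmetrieProduktLegendreQ} with $\mu=i\rho$ we have $Q_{\nu}^{-i\rho}(z)Q_{\nu}^{i\rho}(\omega)=Q_{\nu}^{-i\rho}(\omega)Q_{\nu}^{i\rho}(z)$, and the factor $\cosh(\rho(\pi-\theta))$ does not involve $z,\omega$; hence the integrand on the left of \eqref{example:IntegralFuerGreen}, and thus the integral, is invariant under interchanging $\omega$ and $z$. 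The right-hand side $Q_{\nu}(\omega z-\sqrt{\omega^2-1}\sqrt{z^2-1}\cos\theta)$ is manifestly symmetric as well. Since Example~\ref{example:IntegralFuerGreenSumme} gives the identity for $\omega<z$, symmetry yields it for $\omega>z$, and hence for all $\omega\neq z$. This settles the first assertion.

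For the diagonal case I would argue by continuity. Fix $\theta\in(0,2\pi)$ and a point $z_0=\omega_0\in(1,\infty)$, and choose a compact square $[z_0-\varepsilon,z_0+\varepsilon]^2\subset(1,\infty)^2$. For each fixed $\rho$ the integrand is continuous in $(\omega,z)$, the Legendre functions being analytic in their argument, so it suffices to produce a single integrable majorant valid on this square. This is exactly what Lemma~\ref{lemma:EstimateProductLegendreFunctions} provides: for $\Re(\nu)\ge-\tfrac12$ use estimate $(i)$, and for $-1<\Re(\nu)<-\tfrac12$ use estimate $(iii)$, so that in either case $\vert Q_{\nu}^{-i\rho}(z)Q_{\nu}^{i\rho}(\omega)\vert$ is bounded by a factor that is continuous, hence bounded, in $(\omega,z)$ on the square, times $(\vert\nu\vert+1+\rho)^{2\vert\nu\vert+1}e^{-\pi\rho}$. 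Since $\vert\pi-\theta\vert<\pi$ for $\theta\in(0,2\pi)$, we have $\cosh(\rho(\pi-\theta))\le e^{\vert\pi-\theta\vert\rho}$, and therefore the full integrand is dominated, uniformly on the square, by a constant multiple of $(\vert\nu\vert+1+\rho)^{2\vert\nu\vert+1}e^{-(\pi-\vert\pi-\theta\vert)\rho}$, which is integrable over $[0,\infty)$. Dominated convergence then shows the left-hand side is continuous on the square, in particular across the diagonal.

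It remains to check the right-hand side is continuous at $(z_0,z_0)$, i.e. that its argument stays in the domain of $Q_{\nu}$. Writing $\omega=\cosh a$, $z=\cosh b$ with $a,b>0$, the argument equals $\cosh a\cosh b-\sinh a\sinh b\cos\theta$, which lies between $\cosh(a-b)$ and $\cosh(a+b)$ and is therefore $\ge1$; on the diagonal it equals $1+\sinh^2 a\,(1-\cos\theta)$, which is strictly $>1$ because $\cos\theta<1$ for $\theta\neq0$. Thus near $(z_0,z_0)$ the argument stays in $(1,\infty)$ and depends continuously on $(\omega,z)$, so the right-hand side is continuous there. Taking any sequence $(\omega_n,z_n)\to(z_0,z_0)$ with $\omega_n\neq z_n$, the identity holds along the sequence by the first part, and passing to the limit using the two continuity statements gives the identity at $\omega=z=z_0$.

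The main obstacle is the continuity of the left-hand side up to the diagonal, that is, the construction of the uniform integrable majorant. On the diagonal the individual ingredients degenerate — for $\theta\to0$ the integral even diverges, cf.\ the type-$II$ analysis — so the argument hinges on the precise exponential decay $e^{-\pi\rho}$ supplied by Lemma~\ref{lemma:EstimateProductLegendreFunctions} strictly beating the growth $e^{\vert\pi-\theta\vert\rho}$ of $\cosh(\rho(\pi-\theta))$, which is available exactly because $\theta\in(0,2\pi)$ forces $\vert\pi-\theta\vert<\pi$.
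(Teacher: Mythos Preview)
Your proof is correct and follows essentially the same route as the paper: invoke Example~\ref{example:IntegralFuerGreenSumme} for $\omega<z$, use the symmetry \eqref{equation:SymmetrieProduktLegendreQ} to cover $\omega>z$, and then reach the diagonal by dominated convergence, with the integrable majorant supplied by Lemma~\ref{lemma:EstimateProductLegendreFunctions} $(i)$ and $(iii)$ together with $\cosh(\rho(\pi-\theta))\le e^{\vert\pi-\theta\vert\rho}$. Your additional verification that the argument of $Q_\nu$ on the right-hand side stays strictly above $1$ when $\theta\neq 0$ is a nice point that the paper leaves implicit.
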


\begin{proof}
Suppose $\theta\in [0,2\pi)$ and $\omega,z\in (1,\infty)$ such that $\omega\neq z$. For $\omega<z$ the proof was given in Example \ref{example:IntegralFuerGreenSumme}. In addition to that, observe that both sides of \eqref{example:IntegralFuerGreen} are symmetric in the variables $\omega,z$. The right-hand side is obviously symmetric in those variables, and the left-hand side is symmetric since the term $Q_{\nu}^{-i\rho}(z)Q_{\nu}^{i\rho}(\omega)$ of the integrand is symmetric due to \eqref{equation:SymmetrieProduktLegendreQ}. Hence the equation holds also if $z<\omega$.

Now suppose that we have $\theta\in (0,2\pi)$ and $\omega = z\in (1,\infty)$. Observe that the function on the right-hand side of \eqref{example:IntegralFuerGreen} is continuous in $z\in (1,\infty)$ and thus for any sequence $\left( z_n \right)_{n\in\mathbb{N}}\subset (1,\infty)$ with $z_n\neq z$ for all $n\in\mathbb{N}$ and  $\lim_{n\rightarrow\infty} z_n = z$ we have:
\begin{align*}
 Q_{\nu}\left( \omega z -\sqrt{\omega^2-1}\sqrt{z^2-1}\cos\left( \theta \right) \right) &= \lim\limits_{n\rightarrow\infty} Q_{\nu}\left( \omega z_n -\sqrt{\omega^2-1} \sqrt{z_n^2-1}\cos\left( \theta \right) \right) \\
 &=\lim\limits_{n\rightarrow\infty} \frac{2}{\pi}\int\limits_{0}^{\infty} \cosh\left(\rho \left( \pi-\theta  \right) \right)Q_{\nu}^{-i\rho}(z_n)Q_{\nu}^{i\rho}(\omega) d\rho \\
&= \frac{2}{\pi}\int\limits_{0}^{\infty} \cosh\left(\rho \left( \pi-\theta  \right) \right)Q_{\nu}^{-i\rho}(z)Q_{\nu}^{i\rho}(\omega) d\rho,
\end{align*}
where we obtain the last equality by Lebesgue's dominated convergence theorem: The integrand is continuous with respect to $z\in (1,\infty)$, and the sequence of integrands is dominated by some integrable function due to Lemma \ref{lemma:EstimateProductLegendreFunctions} $(i)$ and $(iii)$. In fact, by Lemma \ref{lemma:EstimateProductLegendreFunctions} and the estimate $\cosh(\rho(\pi-\theta))\leq e^{\, \rho\vert \pi-\theta \vert}$ for all $\rho\geq 0$, there exist $C,\varepsilon>0$ such that for all $\rho\geq 0$ and $n\in\mathbb{N}$: $\vert \cosh\left(\rho \left( \pi-\theta  \right) \right)Q_{\nu}^{-i\rho}(z_n)Q_{\nu}^{i\rho}(\omega) \vert\leq C \cdot e^{-\rho\varepsilon}$. This upper bound is obviously integrable over $\rho\in[0,\infty)$.
\end{proof}

\begin{remark}
One can consider the above integrals as certain so-called generalized Mehler-Fock transforms (see e.g. \cite{Oberhettinger}). One only needs to apply Whipple's formula \eqref{equation:Whipple1} to the term $Q_{\nu}^{-i\rho}(z)$ on the left-hand side of \eqref{equation:TheoremZentralIntegral}. The corresponding ordinary Mehler-Fock transform is obtained if we additionally set $\nu = -\frac{1}{2}$. Note that if we apply Whipple's formula \eqref{equation:Whipple1} also to the term $Q_{\nu}^{i\rho}(\omega)$ on the left-hand side of \eqref{equation:TheoremZentralIntegral}, then some terms under the integral cancel each other out and the integrand becomes shorter. In particular, if $\nu = -\frac{1}{2}$, then the integrand can be simplified further due to the formula (see \cite[8.334 2]{Gradshteyn})
\begin{align*}
\Gamma\left(\frac{1}{2} + i\rho \right)\Gamma\left(\frac{1}{2} - i\rho \right) = \frac{\pi}{\cos(\pi i\rho)},\quad \rho\in \mathbb{R}.
\end{align*}
\end{remark}

\begin{lemma}
\label{lemma:HolomorphieIntegralLegendreProdukt}
Let $\omega, z\in (1,\infty)$. Further, let $h:[0,\infty)\rightarrow\mathbb{R}$ be continuous such that there exists some $\varepsilon\in (0,\pi)$ with $h(\rho)=O(e^{(\pi-\varepsilon)\rho})$ as $\rho\rightarrow\infty$. Then the function
\begin{align*}
F:\mathcal{H}_{>-\frac{1}{2}} \ni \nu\mapsto \int\limits_{0}^{\infty} Q_{\nu}^{-i\rho}(z)Q_{\nu}^{i\rho}(\omega) h(\rho) d\rho \in\mathbb{C}
\end{align*}
is holomorphic on $\mathcal{H}_{>-\frac{1}{2}}:=\{\, z\in\mathbb{C} \mid \Re(z)>-\frac{1}{2}\,  \}$.
\end{lemma}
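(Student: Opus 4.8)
The plan is to prove holomorphy by means of Morera's theorem, combining the holomorphic dependence of the integrand on $\nu$ with a locally uniform integrable bound supplied by Lemma \ref{lemma:EstimateProductLegendreFunctions}$(i)$.

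First I would record that for each fixed $\rho\in[0,\infty)$ the integrand $\nu\mapsto Q_{\nu}^{-i\rho}(z)Q_{\nu}^{i\rho}(\omega)h(\rho)$ is holomorphic on $\mathcal{H}_{>-\frac{1}{2}}$. Indeed, the associated Legendre functions of the second kind are analytic in $\nu$ wherever defined, and $Q_{\nu}^{\pm i\rho}$ is defined precisely when $\pm i\rho+\nu\notin\{-1,-2,\dots\}$; since $\Re(\pm i\rho+\nu)=\Re(\nu)>-\frac{1}{2}>-1$ throughout $\mathcal{H}_{>-\frac{1}{2}}$, this condition always holds, so no spurious poles in $\nu$ appear. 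The factor $h(\rho)$ is constant with respect to $\nu$. Moreover the integrand is jointly continuous in $(\nu,\rho)$, which I will need for the measurability and Fubini steps below.

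Next I would establish a locally uniform integrable majorant. Fix a compact set $K\subset\mathcal{H}_{>-\frac{1}{2}}$ and put $M:=\max_{\nu\in K}\vert\nu\vert$. On $K$ we have $\Re(\nu)\geq-\frac{1}{2}$, so Lemma \ref{lemma:EstimateProductLegendreFunctions}$(i)$ applies and yields
\[
\vert Q_{\nu}^{-i\rho}(z)Q_{\nu}^{i\rho}(\omega)\vert\leq\frac{\pi^4}{\sqrt{z-1}\cdot\sqrt{\omega-1}}\cdot\frac{\left(\vert\nu\vert+1+\rho\right)^{2\vert\nu\vert+1}}{\vert\Gamma(\nu+1)\vert^2}\cdot e^{-\pi\rho}.
\]
On $K$ the quantity $1/\vert\Gamma(\nu+1)\vert^2$ is bounded, since $\Gamma$ is holomorphic and zero-free on $\{\Re>0\}\supset\{\Re(\nu+1)>\frac{1}{2}\}$, and $(\vert\nu\vert+1+\rho)^{2\vert\nu\vert+1}\leq(M+1+\rho)^{2M+1}$. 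Combining this with the hypothesis $h(\rho)=O\!\left(e^{(\pi-\varepsilon)\rho}\right)$, the integrand is bounded, uniformly for $\nu\in K$, by a constant multiple of $(M+1+\rho)^{2M+1}e^{-\varepsilon\rho}$, which is integrable over $[0,\infty)$ because the exponential decay dominates the polynomial growth.

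Finally I would conclude via Morera. The uniform bound gives, by dominated convergence, that $F$ is continuous on $\mathcal{H}_{>-\frac{1}{2}}$. For any closed triangle $\Delta\subset\mathcal{H}_{>-\frac{1}{2}}$, the same bound (uniform over the compact $\Delta$) licenses an application of Fubini's theorem to interchange the contour integral $\oint_{\partial\Delta}$ with $\int_0^\infty d\rho$; the inner integral $\oint_{\partial\Delta}Q_{\nu}^{-i\rho}(z)Q_{\nu}^{i\rho}(\omega)\,d\nu$ vanishes by Cauchy's theorem, whence $\oint_{\partial\Delta}F(\nu)\,d\nu=0$, and Morera's theorem yields holomorphy. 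I expect the only genuine work to be the domination step: one must verify that Lemma \ref{lemma:EstimateProductLegendreFunctions}$(i)$ is applicable on all of $K$ and that the resulting majorant is truly integrable — but this reduces to the elementary fact that $e^{-\varepsilon\rho}$ beats any fixed power of $\rho$.
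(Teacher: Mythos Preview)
Your proof is correct and follows essentially the same approach as the paper: both rely on Lemma~\ref{lemma:EstimateProductLegendreFunctions}$(i)$ to produce a locally uniform integrable majorant, and then conclude holomorphy from holomorphy of the integrand in $\nu$. The only difference is cosmetic: the paper invokes a standard parameter-integral theorem (Elstrodt, Satz~5.8) as a black box, whereas you spell out the underlying Morera/Fubini argument directly.
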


\begin{proof}
Let $f(\nu,\rho):=Q_{\nu}^{-i\rho}(z)Q_{\nu}^{i\rho}(\omega) h(\rho)$ for all $\nu\in \mathcal{H}_{>-\frac{1}{2}}$ and $\rho \in [0,\infty)$. Because of \cite[Satz $5.8$ on p. 148]{Elstrodt} it suffices to show for all non-empty compact sets $K\subset \mathcal{H}_{>-\frac{1}{2}}$ the existence of some integrable function $g_K:[0,\infty)\rightarrow\mathbb{R}$ with $\sup_{\nu\in K}\vert f(\nu,\rho) \vert\leq g_K(\rho)$ for all $\rho\in [0,\infty)$. Note that the other conditions of \cite[Satz $5.8$ on p. 148]{Elstrodt} are obviously satisfied by $f$, namely $f(\nu,\cdot)$ is integrable for all $\nu\in \mathcal{H}_{>-\frac{1}{2}}$ because of Theorem \ref{theorem:TheoremZentralIntegral} and $f(\cdot, \rho)$ is holomorphic on $\mathcal{H}_{>-\frac{1}{2}}$ for all $\rho\geq 0$.

Let $K\subset \mathcal{H}_{>-\frac{1}{2}}$ be an arbitrary non-empty compact set. From \eqref{equation:EstimateProductLegendreFunctions} and the assumptions on $h$, there exist constants $\tilde{C}>0$ and $\varepsilon\in (0,\pi)$ such that for all $\nu \in K$ and $\rho\in[0,\infty)$:
\begin{align}
\label{equation:BeweisHolomorphie1}
\vert f(\nu,\rho) \vert \leq \tilde{C}\cdot \frac{(\vert\nu \vert + 1 +\rho)^{2\vert \nu \vert+1}}{\vert \Gamma(\nu+1)\vert^2} \cdot  e^{ -\varepsilon \rho}.
\end{align}

With $D:=\sup_{\nu\in K }\left\lbrace \frac{1}{\vert \Gamma(\nu+1)\vert^2} \right\rbrace$, $M:=\sup_{\nu\in K}\{ \vert \nu \vert \}$ we have for all $\nu\in K$ and $\rho\in[0,\infty)$:
\begin{align*}
\vert f(\nu,\rho) \vert \leq \tilde{C} D   \left( M + 1 + \rho \right)^{2M+1}\cdot e^{-\varepsilon \rho}\leq C\cdot e^{-\frac{\varepsilon}{2}\rho}=: g_K(\rho),
\end{align*}
where $C:=\sup_{\rho \geq 0} \{ \tilde{C}D(M+1+\rho)^{2M+1}\cdot e^{-\frac{\varepsilon}{2}\rho} \}\in (0,\infty)$. That function $g_K$ is obviously integrable over $[0,\infty)$ and satisfies $\sup_{\nu\in K}\vert f(\nu,\rho) \vert\leq g_K(\rho)$ for all $\rho\in [0,\infty)$. Hence, $F$ must be holomorphic.
\end{proof}

\begin{lemma}
\label{lemma:SecondEstimateProductLegendreFunctions}
Let $\nu\in (0,\infty)$ be fixed.
\begin{itemize}
\item[$(i)$] For any $b \in (0,\infty)$ there exist constants $C,D>0$ \emph{(}depending on $\nu$ and b\emph{)} such that for all $a\in (0,\infty)$ and $\mu>0$:
\begin{align}
\label{equation:SecondEstimateProductLegendreFunctions}
\vert P_{\nu}^{-\mu}(\cosh(a))Q_{\nu}^{\mu}(\cosh(b)) \vert \leq Ce^{\left(\nu+\frac{1}{2}\right)a} \cdot  \mu^{\nu+\frac{1}{2}} \cdot \left(D\cdot \sinh\left(\frac{a}{2}\right)\right)^{\mu}.
\end{align}
\item[$(ii)$] For all $a,b\in (0,\infty)$
\begin{align}
\label{equation:SecondEstimateProductLegendreFunctions2}
\vert P_{\nu}(\cosh(a))Q_{\nu}(\cosh(b)) \vert \leq e^{\nu(a-b)}\cdot e^{\frac{a}{2}} \ln\left(\frac{\cosh(b)+1}{\cosh(b)-1}\right).
\end{align}
\end{itemize}
\end{lemma}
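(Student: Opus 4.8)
The plan is to estimate the two factors $P_\nu^{-\mu}(\cosh a)$ and $Q_\nu^\mu(\cosh b)$ separately and then multiply, arranging the $\mu$-dependence so that the super-exponential growth of $\Gamma(\nu+1+\mu)$ coming from the $Q$-factor is cancelled by a $1/\Gamma(\mu+1)$ coming from the $P$-factor. For the function of the second kind I would use the integral representation \eqref{equation:RepresentationLegendreQIntegral} exactly as in the proof of Lemma \ref{lemma:EstimateProductLegendreFunctions}. For the function of the first kind I would start from the defining hypergeometric series \eqref{equation:LegendreFirst}: writing $z=\cosh a$ one has $\frac{z+1}{z-1}=\coth^2(a/2)$ and $\frac{1-z}{2}=-\sinh^2(a/2)$, so that
\begin{align*}
P_\nu^{-\mu}(\cosh a)=\tanh\!\left(\tfrac a2\right)^{\mu}\frac{1}{\Gamma(\mu+1)}\,F\!\left(-\nu,\nu+1;\mu+1;-\sinh^2\!\left(\tfrac a2\right)\right).
\end{align*}

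For part $(i)$, I would insert Euler's integral representation of $F$ (valid here for $\mu>\nu$, the relevant regime since the applications use integer $\mu\ge 1$). This representation has the decisive feature that its prefactor $\Gamma(\mu+1)/(\Gamma(\nu+1)\Gamma(\mu-\nu))$ cancels the $1/\Gamma(\mu+1)$ above, leaving
\begin{align*}
P_\nu^{-\mu}(\cosh a)=\frac{\tanh(a/2)^{\mu}}{\Gamma(\nu+1)\Gamma(\mu-\nu)}\int_0^1 t^{\nu}(1-t)^{\mu-\nu-1}\bigl(1+\sinh^2(a/2)\,t\bigr)^{\nu}\,dt.
\end{align*}
Bounding $(1+\sinh^2(a/2)t)^\nu\le\cosh(a/2)^{2\nu}\le e^{\nu a}$ and evaluating the remaining Beta integral as $\Gamma(\nu+1)\Gamma(\mu-\nu)/\Gamma(\mu+1)$ yields $|P_\nu^{-\mu}(\cosh a)|\le e^{\nu a}\sinh(a/2)^\mu/\Gamma(\mu+1)$, where I also used $\tanh(a/2)\le\sinh(a/2)$. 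The integral representation \eqref{equation:RepresentationLegendreQIntegral}, together with the elementary bounds $\cosh b+\cos t\le\cosh b+1$ and $(\cosh b+1)/\sinh b=\coth(b/2)$, gives $|Q_\nu^\mu(\cosh b)|\le C(\nu,b)\,\Gamma(\nu+1+\mu)\coth(b/2)^\mu$. Multiplying and applying the gamma-quotient asymptotic \eqref{equation:GammaQuotAsymp}, $\Gamma(\nu+1+\mu)/\Gamma(\mu+1)\le C\mu^{\nu}\le C\mu^{\nu+\frac12}$ for $\mu\ge1$, produces the claimed bound with $D=\coth(b/2)$.

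For part $(ii)$ I would again use \eqref{equation:RepresentationLegendreQIntegral} with $\mu=0$, rewriting the integrand as $\bigl(\tfrac{\sin^2 t}{\cosh b+\cos t}\bigr)^{\nu}\tfrac{\sin t}{\cosh b+\cos t}$. The key elementary estimate is $\frac{\sin^2 t}{\cosh b+\cos t}\le 2e^{-b}$, which is equivalent to $(\cos t+e^{-b})^2\ge 0$ and hence holds for all $t$; it extracts exactly the factor $e^{-\nu b}$ and leaves $\int_0^\pi\frac{\sin t}{\cosh b+\cos t}\,dt=\ln\frac{\cosh b+1}{\cosh b-1}$, so that $Q_\nu(\cosh b)\le\frac12 e^{-\nu b}\ln\frac{\cosh b+1}{\cosh b-1}$. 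Combining this with the standard Laplace integral bound $P_\nu(\cosh a)\le\frac1\pi\int_0^\pi(\cosh a+\sinh a\cos\phi)^\nu\,d\phi\le e^{\nu a}$ and using $e^{a/2}\ge 1$ gives the statement.

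I expect the main obstacle to be making the $P_\nu^{-\mu}$ estimate uniform simultaneously in large $a$ and in $\mu$: the argument $-\sinh^2(a/2)$ of the hypergeometric function tends to $-\infty$, so a naive term-by-term bound of the series diverges, and one is essentially forced to use Euler's integral to control the large-argument growth through the factor $(1+\sinh^2(a/2)t)^\nu$. This in turn brings in the constraint $\mu>\nu$, so the complementary compact range of $\mu$ (relevant only when $\nu\ge1$) has to be treated separately by continuity and compactness, with the $\mu$-uniform constants absorbed into $C$ and $D$. Pinning down the precise power $\mu^{\nu+\frac12}$ via \eqref{equation:GammaQuotAsymp} is then routine.
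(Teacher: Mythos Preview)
Your proposal is correct and the overall architecture---estimate $P_\nu^{-\mu}$ and $Q_\nu^{\mu}$ separately, then cancel the gamma factors via \eqref{equation:GammaQuotAsymp}---matches the paper. The genuine difference lies in the integral representation you choose for $P_\nu^{-\mu}(\cosh a)$. You go through the defining hypergeometric series and Euler's integral, producing a factor $1/\Gamma(\mu+1)$ and hence the quotient $\Gamma(\nu+1+\mu)/\Gamma(\mu+1)\sim\mu^{\nu}$; the paper instead invokes the Mehler--Dirichlet formula (\cite[8.715~1]{Gradshteyn})
\[
P_\nu^{-\mu}(\cosh a)=\frac{\sqrt{2}}{\sqrt{\pi}\,\sinh(a)^{\mu}\,\Gamma\!\left(\mu+\tfrac12\right)}\int_0^a \cosh\!\Big(\big(\nu+\tfrac12\big)x\Big)\,(\cosh a-\cosh x)^{\mu-\frac12}\,dx,
\]
which yields $1/\Gamma(\mu+\tfrac12)$ and hence the quotient $\Gamma(\nu+1+\mu)/\Gamma(\mu+\tfrac12)\sim\mu^{\nu+\frac12}$ directly, exactly the power in the statement. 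The paper's route therefore avoids both your extra step $\mu^{\nu}\le C\mu^{\nu+\frac12}$ and, more importantly, the restriction $\mu>\nu$ needed for Euler's integral---the Mehler--Dirichlet formula is valid uniformly for all $\mu\ge 0$, so no separate compactness argument for small $\mu$ is required. Your approach, on the other hand, gives the slightly sharper factors $e^{\nu a}$ and $D=\coth(b/2)$ in place of the paper's $e^{(\nu+\frac12)a}$ and $D=2\coth(b)$. For part~(ii) your treatment of $Q_\nu(\cosh b)$ via $\frac{\sin^2 t}{\cosh b+\cos t}\le 2e^{-b}$ is exactly what the paper does; for $P_\nu(\cosh a)$ you use the Laplace integral $P_\nu(\cosh a)\le e^{\nu a}$, while the paper simply reuses its Mehler--Dirichlet bound at $\mu=0$, giving $|P_\nu(\cosh a)|\le\frac{3}{\sqrt{\pi}}\,e^{(\nu+\frac12)a}$. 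Both yield the stated inequality.
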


\begin{proof}
From \cite[formula 8.715 1]{Gradshteyn} we know that for all $\mu \geq 0$ and $a>0$:
\begin{align*}
P_{\nu}^{-\mu}(\cosh(a)) = \frac{\sqrt{2}}{\sqrt{\pi}\sinh(a)^{\mu}\cdot \Gamma(\frac{1}{2} + \mu)} \int\limits_{0}^{a} \cosh\bigg(\left(\nu+\frac{1}{2}\right)x\bigg) \cdot (\cosh(a)-\cosh(x))^{\mu-\frac{1}{2}}\, dx,
\end{align*}
and therefore
\begin{align}
\label{equation:SecondEstimateProductLegendreFunctionsBeweis1}
\vert P_{\nu}^{-\mu}(\cosh(a)) \vert \leq \sqrt{\frac{2}{\pi}} \cdot \frac{e^{\left(\nu+\frac{1}{2}\right)a} (\cosh(a)-1)^{\mu}}{\sinh(a)^{\mu}\cdot \vert \Gamma(\frac{1}{2} + \mu)\vert} \int\limits_{0}^{a} \frac{1}{\sqrt{\cosh(a)-\cosh(x)}}\, dx. 
\end{align}
Since $\cosh(t)=2 \sinh^2\left(\frac{t}{2}\right) + 1$ for all $t\in\mathbb{R}$, the above integral can be estimated as follows:
\begin{align*}
\int\limits_{0}^{a} \frac{1}{\sqrt{\cosh(a)-\cosh(x)}}\, dx &= \frac{1}{\sqrt{2}}\int\limits_{0}^{a} \frac{1}{\sqrt{\sinh^2\left(\frac{a}{2}\right)-\sinh^2\left(\frac{x}{2}\right)}}\, dx \\
&\leq \frac{1}{\sqrt{2\sinh\left(\frac{a}{2}\right)}} \int\limits_{0}^{a} \frac{1}{\sqrt{\sinh\left(\frac{a}{2}\right)-\sinh\left(\frac{x}{2}\right)}}\, dx,
\end{align*}
and using $\sinh\left(\frac{a}{2}\right)-\sinh\left(\frac{x}{2}\right)\geq \frac{a}{2}-\frac{x}{2}$ for all $x\in (0,a)$ we obtain:
\begin{align}
\label{equation:SecondEstimateProductLegendreFunctionsBeweis2}
\int\limits_{0}^{a} \frac{1}{\sqrt{\cosh(a)-\cosh(x)}}\, dx \leq \frac{1}{\sqrt{\sinh\left(\frac{a}{2}\right)}} \int\limits_{0}^{a} \frac{1}{\sqrt{a-x}}\, dx = 2\sqrt{\frac{a}{\sinh\left(\frac{a}{2}\right)}}\leq 2  \sqrt{2}.
\end{align}
Because of \eqref{equation:SecondEstimateProductLegendreFunctionsBeweis1} and \eqref{equation:SecondEstimateProductLegendreFunctionsBeweis2} we have for all $\mu \geq 0$ and $a>0$:
\begin{align}
\label{equation:BeweisAbschaetzungLegendrePGrob}
\vert P_{\nu}^{-\mu}(\cosh(a)) \vert \leq \frac{3\cdot  e^{\left(\nu+\frac{1}{2}\right)a} }{ \vert \Gamma(\frac{1}{2} + \mu)\vert} \cdot \left(\frac{\cosh(a)-1}{\sinh(a)}\right)^{\mu}\leq \frac{3\cdot  e^{\left(\nu+\frac{1}{2}\right)a} }{ \vert \Gamma(\frac{1}{2} + \mu)\vert} \cdot \left(\sinh\left(\frac{a}{2}\right)\right)^{\mu},
\end{align}
where, for the last equality, we used $\frac{\cosh(a)-1}{\sinh(a)}=\frac{2\sinh^2(\frac{a}{2})}{\sinh(a)}<\sinh(\frac{a}{2})$.

 Further, from \eqref{equation:RepresentationLegendreQIntegral} (or see \cite[formula (5) on p. 155]{Erdelyi}), for all $\mu \geq 0$ and $b>0$:
\begin{align}
\label{equation:SecondEstimateProductLegendreFunctionsBeweis3}
\vert Q_{\nu}^{\mu}(\cosh(b))\vert = \frac{  \Gamma(\nu+1+\mu) }{\Gamma(\nu+1)2^{\nu+1}\sinh(b)^{\mu}}\int\limits_{0}^{\pi} \frac{\sin(t)^{2\nu+1}\cdot(\cosh(b)+\cos(t))^{\mu}}{(\cosh(b)+\cos(t))^{\nu+1}} \,dt.
\end{align}
$(i)$. We will estimate the integrand given above. Note that for any $z>1$ the function $(-1,1)\ni x\mapsto \frac{1-x^2}{z+x}\in\mathbb{R}$ has a global maximum at $x=-z+\sqrt{z^2-1}$, where the maximal value is $2z-2\sqrt{z^2-1}$. Hence, with $z:=\cosh(b)$ we have for all $t\in(0,\pi)$ 
\begin{align}
\label{equation:AbschaetzungUnterIntegralFein}
 \frac{\sin(t)^{2\nu+1}}{(\cosh(b)+\cos(t))^{\nu+1}} &= \left(\frac{1-\cos^2(t)}{\cosh(b)+\cos(t)}\right)^{\nu}\cdot \frac{\sin(t)}{\cosh(b)+\cos(t)} \leq \frac{2^{\nu}e^{-b\nu}\sin(t)}{(\cosh(b)-1)}.
\end{align}
Thus, we can estimate the integral on the right-hand side of \eqref{equation:SecondEstimateProductLegendreFunctionsBeweis3} as follows:
\begin{align*}
\int\limits_{0}^{\pi} \frac{\sin(t)^{2\nu+1}\cdot(\cosh(b)+\cos(t))^{\mu}}{(\cosh(b)+\cos(t))^{\nu+1}} \,dt \,&\leq \frac{2^{\nu}}{(\cosh(b)-1)}\int\limits_{0}^{\pi} \sin(t)(\cosh(b)+\cos(t))^{\mu} \,dt \\
&=\frac{2^{\nu}}{(\cosh(b)-1)}\cdot \frac{(\cosh(b)+1)^{\mu+1}-(\cosh(b)-1)^{\mu+1}}{\mu+1} \\
& \leq \frac{2^{\nu+1}  \cosh(b)^{\mu+1} 2^{\mu}}{(\cosh(b)-1)}.
\end{align*}
Using \eqref{equation:SecondEstimateProductLegendreFunctionsBeweis3} we have
\begin{align}
\label{equation:SecondEstimateProductLegendreFunctionsBeweis4}
\vert Q_{\nu}^{\mu}(\cosh(b))\vert \leq \frac{  \Gamma(\nu+1+\mu) }{\Gamma(\nu+1)}\cdot \frac{\cosh(b)}{\cosh(b)-1}\cdot \left(2\coth(b)\right)^{\mu}.
\end{align}

Let $\tilde{C}>0$ be some constant such that $\big\vert\frac{\Gamma(\nu+1+\mu)}{ \Gamma(\frac{1}{2} + \mu)}\big\vert \leq \tilde{C}\cdot \mu^{\nu+\frac{1}{2}}$ for all $\mu>0$, where such a constant exists because of \eqref{equation:GammaQuotAsymp}. If we set $C:=\tilde{C}\cdot \frac{3 \cosh(b)}{(\cosh(b)-1)\Gamma(\nu+1)}$ and $D:=2\coth(b)$, then the lemma follows from \eqref{equation:BeweisAbschaetzungLegendrePGrob} and \eqref{equation:SecondEstimateProductLegendreFunctionsBeweis4}.

$(ii)$. Similarly, for $\mu=0$ we obtain from equation \eqref{equation:SecondEstimateProductLegendreFunctionsBeweis3}:
\begin{align*}
\vert Q_{\nu}(\cosh(b))\vert &= \frac{1}{2^{\nu+1}}\int\limits_{0}^{\pi} \frac{\sin(t)^{2\nu+1}}{(\cosh(b)+\cos(t))^{\nu+1}} \,dt \\
&= \frac{1}{2^{\nu+1}}\int\limits_{0}^{\pi} \left( \frac{1-\cos^2(t)}{\cosh(b)+\cos(t)}\right)^{\nu} \frac{\sin(t)}{\cosh(b)+\cos(t)} \\
& \leq \frac{e^{-b \nu }}{2}\int\limits_{0}^{\pi} \frac{\sin(t)}{ \cosh(b)+\cos(t)} \,dt \\
& = \frac{e^{-b \nu }}{2}\ln\left(\frac{\cosh(b)+1}{\cosh(b)-1}\right).
\end{align*}
The statement follows from the above estimate together with \eqref{equation:BeweisAbschaetzungLegendrePGrob}.
\end{proof}

\section{Green's function of a wedge}

We will use the generalized Mehler-Fock transformations, which we obtained in the last section, to deduce a new formula for the Green's function of a wedge in the hyperbolic plane. 

Let $\mathbb{H}^2$ be the hyperbolic plane and let $\Delta$ denote the Laplace-Beltrami operator. For any domain $\Omega\subset\mathbb{H}^2$ let $\Delta_{\Omega}$ denote the Dirichlet Laplacian for $\Omega$ and let $K_{\Omega}:\Omega\times\Omega\times (0,\infty)\rightarrow\mathbb{R}$ denote the heat kernel corresponding to the Dirichlet Laplacian. The heat kernel can be defined as the minimal non-negative fundamental solution to the heat equation (see \cite[Section 2.1]{ErenDiss},  \cite{Grigoryan}).

We will use the following notation: For $\delta\in\mathbb{R}$
\begin{align}
\label{equation:HalfPlane}
\mathcal{H}_{>\delta}:=\{\,z\in\mathbb{C} \mid  \Re(z)>\delta \,\},
\end{align}
where $\Re(z)$ denotes the real part of $z$, and for $\Omega\subset\mathbb{H}^2$ let
\begin{align*}
\offdiag(\Omega):&=\{\, \left(x,y\right)\in\Omega\times\Omega \mid x\neq y \,\},
\end{align*} 
which we call the \emph{off-diagonal} of the cartesian product $\Omega\times\Omega$. Moreover, for a given continuous function $f:(0,\infty)\rightarrow \mathbb{R}$ we denote its Laplace transform as $\mathcal{L}\lbrace f \rbrace (s):=\int_{0}^{\infty} e^{-st}f(t) dt$, provided the Laplace integral exists for some $s\in\mathbb{C}$. The inverse Laplace transform is indicated by the symbol $\mathcal{L}^{-1}\lbrace \cdot \rbrace$.

\begin{definition}
\label{definition:Greens}
The Laplace transform of the heat kernel $K_{\Omega}$ is called the \emph{resolvent kernel} or \emph{Green's function} of $\Omega$. More precisely, for any domain $\Omega\subset\mathbb{H}^2$, the Green's function is defined as the function
\begin{align}
\label{equation:Greens}
\begin{split}
&G_{\Omega}: \offdiag(\Omega) \times \mathcal{H}_{>0}\rightarrow \mathbb{C}, \\
&\left((x,y),s\right)\mapsto G_{\Omega}(x,y;s):=\int\limits_{0}^{\infty} e^{-st}K_{\Omega}(x,y;t) dt.
\end{split}
\end{align}
\end{definition}

\begin{proposition}
\label{proposition:PDEGreen}
\emph{(\cite[Exercise 9.10]{Grigoryan})} For any $s>0$, the Green's function $G_{\Omega}\left( \cdot,\cdot;s \right)$ is a non-negative and smooth function on $\offdiag(\Omega)$. Furthermore, for any $y\in\Omega$ and any $s > 0$ the function $u:\Omega\backslash \{y\}\ni x\mapsto G_{\Omega}(x,y;s)\in\mathbb{R}$ belongs to $L^1(\Omega)$ and
\begin{align}
\label{equation:PDEGreen}
\int\limits_{\Omega} u(x)\cdot \left( s + \Delta \right)f(x)\, dx &=f(y)\, \text{ for all } f\in C_c^{\infty}(\Omega).
\end{align}
In particular, $\left( s + \Delta \right)u(x)=0$ for all $x\in\Omega\backslash \{y\}$.
\end{proposition}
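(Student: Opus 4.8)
The plan is to reduce everything to standard properties of the Dirichlet heat semigroup $P_t := e^{-t\Delta_\Omega}$, whose integral kernel is $K_\Omega$; concretely $(P_t f)(y) = \int_\Omega K_\Omega(x,y;t) f(x)\, dx$ for $f \in C_c^\infty(\Omega)$, where I use the symmetry $K_\Omega(x,y;t)=K_\Omega(y,x;t)$. Three ingredients carry the bulk of the work: non-negativity of $K_\Omega$ (it is the minimal non-negative fundamental solution), the sub-Markov bound $\int_\Omega K_\Omega(x,y;t)\, dx \le 1$ for all $t>0$, and the heat equation together with the initial condition, i.e.\ $\partial_t P_t f = -\Delta P_t f = -P_t \Delta f$ and $P_t f \to f$ as $t \to 0^+$.

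Non-negativity of $u := G_\Omega(\cdot,y;s)$ is immediate, since $e^{-st}>0$ and $K_\Omega \ge 0$. For integrability I would apply Tonelli's theorem to obtain
\begin{align*}
\int_\Omega u(x)\, dx = \int_0^\infty e^{-st}\Big( \int_\Omega K_\Omega(x,y;t)\, dx \Big) dt \le \int_0^\infty e^{-st}\, dt = \frac{1}{s} < \infty,
\end{align*}
so that $u \in L^1(\Omega)$ with $\|u\|_{L^1(\Omega)} \le 1/s$. For the key identity, fix $f \in C_c^\infty(\Omega)$; then $(s+\Delta)f \in C_c^\infty(\Omega)$ as well, and by the symmetry of $K_\Omega$ together with Fubini (justified by the preceding $L^1$ bound applied to the bounded, compactly supported function $(s+\Delta)f$) I would write
\begin{align*}
\int_\Omega u(x)(s+\Delta)f(x)\, dx = \int_0^\infty e^{-st}\, \big(P_t((s+\Delta)f)\big)(y)\, dt.
\end{align*}
I then substitute $P_t\big((s+\Delta)f\big) = sP_t f - \partial_t P_t f$, which follows from commuting $P_t$ with $\Delta$ and the heat equation $\partial_t P_t f = -\Delta P_t f$, and integrate the $\partial_t$-term by parts in $t$. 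The boundary contribution $e^{-st}P_t f(y)$ vanishes at $t=\infty$ and equals $f(y)$ at $t=0$ by the initial condition, while the remaining integral cancels the $sP_t f$ term; what survives is exactly $f(y)$, which proves \eqref{equation:PDEGreen}.

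Finally, \eqref{equation:PDEGreen} says precisely that $(s+\Delta)u = \delta_y$ in the sense of distributions, because $\Delta$ is formally self-adjoint. Restricting to test functions supported in $\Omega\setminus\{y\}$ gives $(s+\Delta)u = 0$ distributionally there; since $s+\Delta$ is elliptic with smooth coefficients, elliptic regularity (Weyl's lemma) upgrades $u$ to a genuine smooth solution on $\Omega\setminus\{y\}$, yielding simultaneously the smoothness asserted in the first sentence and the pointwise equation. I expect the only real obstacle to be technical: justifying the interchange of the $t$-integral with $\Delta$ and with the $\Omega$-integral, and controlling the boundary terms in the integration by parts. This requires quantitative decay estimates on $P_t f(y)$ and $\partial_t P_t f(y)$ as $t \to \infty$ and the precise (say, uniform) sense of $P_t f \to f$ as $t \to 0^+$; all of these are standard facts about Dirichlet heat semigroups on a Riemannian manifold (see \cite{Grigoryan}), so the task reduces to verifying the hypotheses of Fubini's theorem, of differentiation under the integral sign, and of the integration by parts.
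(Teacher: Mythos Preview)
The paper does not actually prove this proposition: it is stated with a citation to \cite[Exercise~9.10]{Grigoryan} and then used as a black box. Your argument is the standard one and is correct; it is almost certainly what the cited exercise has in mind. The only point worth flagging is that the statement asserts joint smoothness of $G_\Omega(\cdot,\cdot;s)$ on $\offdiag(\Omega)$, whereas your elliptic regularity step, as written, yields smoothness in $x$ for each fixed $y$. Joint smoothness follows either by the same argument applied symmetrically together with the known $C^\infty$ regularity of $K_\Omega$ in $(x,y,t)$ away from the space--time diagonal (so that one may differentiate under the $t$-integral), or by running elliptic regularity for the operator $s+\Delta_x+\Delta_y$ on $\Omega\times\Omega\setminus\mathrm{diag}$; either route is routine.
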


It will be useful to introduce the following shifted functions.

\begin{definition}
\label{definition:ShiftedFunctions1}
For any domain $\Omega\subset \mathbb{H}^2$, we call the functions $K_{\Omega}^{\nicefrac{1}{4}}:\Omega\times\Omega\times (0,\infty)\rightarrow\mathbb{R}$ and $G_{\Omega}^{\nicefrac{1}{4}}:\offdiag(\Omega)\times \mathcal{H}_{>\frac{1}{4}}\rightarrow\mathbb{C}$ defined by
\begin{align}
K_{\Omega}^{\nicefrac{1}{4}}(x,y;t):&=e^{\frac{1}{4}t}\cdot K_{\Omega}(x,y;t),\label{equation:ShiftedHeat}\\
G_{\Omega}^{\nicefrac{1}{4}}(x,y;s):&=\mathcal{L}\lbrace K_{\Omega}^{\nicefrac{1}{4}}(x,y;t)\rbrace (s) \label{equation:ShiftedGreen}
\end{align}
the \emph{shifted} heat kernel and the \emph{shifted} Green's function of $\Omega$, respectively.
\end{definition}

For each $s>\frac{1}{4}$, the shifted Green's function  $G_{\Omega}^{\nicefrac{1}{4}}$ satisfies Proposition \ref{proposition:PDEGreen} if the Laplacian is replaced by the \emph{shifted} Laplacian $\Delta^{\nicefrac{1}{4}}:=\Delta-\frac{1}{4}$. The reason is that  $s+\Delta^{\nicefrac{1}{4}}=\left(s-\frac{1}{4}\right)+\Delta$ and
\begin{align}
G_{\Omega}^{\nicefrac{1}{4}}(x,y;s) &= \int\limits_{0}^{\infty} e^{-st }e^{\frac{1}{4}t}\cdot K_{\Omega}(x,y;t) dt= \int\limits_{0}^{\infty} e^{-\left(s-\frac{1}{4}\right)t }\cdot K_{\Omega}(x,y;t) dt \nonumber \\
& = G_{\Omega}\left(x,y;s-\frac{1}{4}\right). \label{equation:ConnectionShiftGreen}
\end{align}
Most of the formulas below are stated in terms of the shifted functions, since then they become shorter. The formulas can always be rewritten into corresponding formulas for the Green's function by \eqref{equation:ConnectionShiftGreen} and for the heat kernel using \eqref{equation:ShiftedHeat}.

The heat kernel $K_{\mathbb{H}^2}$ for the hyperbolic plane is given by the following formulas (see, e.g., \cite[(12) and (13) on p. 246]{Chavel}):
\begin{align}
K_{\mathbb{H}^2}(x,y;t)&= \frac{1}{2\pi}\int\limits_{0}^{\infty} e^{-\left( \frac{1}{4} + \rho^2 \right)t}P_{-\frac{1}{2}+i\rho}\left( \cosh(d(x,y)) \right)\rho\tanh\left( \pi\rho \right)d\rho \label{equation:HyperHeat1}\\
&=\frac{\sqrt{2}}{(4\pi t)^{\nicefrac{3}{2}}}e^{-\frac{t}{4}}\int\limits_{d(x,y)}^{\infty} \frac{\rho e^{-\frac{\rho^2}{4t}}}{\sqrt{\cosh(\rho)-\cosh(d(x,y))}}d\rho, \label{equation:HyperHeat2}
\end{align}
where $d(x,y)$ denotes the hyperbolic distance of $x,y\in\mathbb{H}^2$ and $P_{-\frac{1}{2}+i\rho}$ is the Legendre function of the first kind from Definition \ref{definition:LegendreFirstSecond}.
We start with a helpful new formula for the Green's function $G_{\mathbb{H}^2}$ of the hyperbolic plane in terms of polar coordinates.

\begin{proposition}
\label{proposition:GreenPlane}
Let $s\in\mathbb{C}$ be such that $\Re(s) > \frac{1}{4}$, and let $x,y\in\mathbb{H}^2$ with $x\neq y$ be given. The shifted Green's function for the hyperbolic plane is given by the equation
\begin{align}
G_{\mathbb{H}^2}^{\nicefrac{1}{4}}(x,y;s)&=\frac{1}{2\pi}Q_{\sqrt{s}-\frac{1}{2}}\left( \cosh(d(x,y)) \right). \label{equation:GreenPlane1}
\end{align}
Suppose, further, that we have chosen polar coordinates in $\mathbb{H}^2$ \emph{(}associated with a chosen orientation and a chosen geodesic ray\emph{)} such that $x=(a,\alpha)$ and $y=(b,\beta)$, where $a,b\in(0,\infty)$ and $\alpha,\beta\in [0,2\pi)$. Then
\begin{align}
G_{\mathbb{H}^2}^{\nicefrac{1}{4}}(x,y;s)=\frac{1}{\pi^2} \int\limits_{0}^{\infty} Q_{\sqrt{s}-\frac{1}{2}}^{-i\rho}\left(\cosh(a)\right)Q_{\sqrt{s}-\frac{1}{2}}^{i\rho}\left(\cosh(b)\right) \cosh\left(\rho\cdot \left( \pi-\vert \alpha-\beta \vert \right)\right)d\rho. \label{equation:GreenPlane2}
\end{align}
\end{proposition}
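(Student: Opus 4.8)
The plan is to prove the closed form \eqref{equation:GreenPlane1} first, and then to read off the polar expansion \eqref{equation:GreenPlane2} by substituting \eqref{equation:GreenPlane1} into Corollary \ref{corollary:IntegralProduktLegendreFuerGreensFunction} through the hyperbolic law of cosines. For \eqref{equation:GreenPlane1} I would start from the definition \eqref{equation:ShiftedGreen} together with \eqref{equation:ShiftedHeat}, giving $G_{\mathbb{H}^2}^{\nicefrac{1}{4}}(x,y;s)=\int_0^\infty e^{-st}e^{\frac{1}{4}t}K_{\mathbb{H}^2}(x,y;t)\,dt$, and insert the second heat-kernel representation \eqref{equation:HyperHeat2}, whose factor $e^{-t/4}$ cancels the shift $e^{t/4}$. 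Writing $d:=d(x,y)$, this leaves
\begin{align*}
G_{\mathbb{H}^2}^{\nicefrac{1}{4}}(x,y;s)=\frac{\sqrt{2}}{(4\pi)^{3/2}}\int\limits_{0}^{\infty}\int\limits_{d}^{\infty}\frac{\rho\,e^{-\rho^2/(4t)}}{\sqrt{\cosh(\rho)-\cosh(d)}}\,t^{-3/2}e^{-st}\,d\rho\,dt.
\end{align*}

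I would then interchange the two integrals and evaluate the inner one in $t$ by the elementary Laplace formula $\int_0^\infty t^{-3/2}e^{-st-\rho^2/(4t)}\,dt=\frac{2\sqrt{\pi}}{\rho}e^{-\rho\sqrt{s}}$, valid for $\Re(s)>0$ with the principal branch of the root. The factor $\rho$ cancels, producing $\frac{\sqrt{2}}{4\pi}\int_d^\infty \frac{e^{-\rho\sqrt{s}}}{\sqrt{\cosh(\rho)-\cosh(d)}}\,d\rho$, which I would identify with $\frac{1}{2\pi}Q_{\sqrt{s}-\frac{1}{2}}(\cosh(d))$ by means of the classical representation $Q_\nu(\cosh(d))=\frac{1}{\sqrt{2}}\int_d^\infty e^{-(\nu+\frac{1}{2})\rho}(\cosh(\rho)-\cosh(d))^{-1/2}\,d\rho$ (valid for $\Re(\nu)>-1$, cf. \eqref{equation:RepresentationLegendreQIntegral}), taken at $\nu=\sqrt{s}-\frac{1}{2}$ so that $\nu+\frac{1}{2}=\sqrt{s}$. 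Since $\Re(s)>\frac{1}{4}$ forces $s\notin(-\infty,0]$ and hence $\Re(\sqrt{s})>0$, we have $\Re(\nu)>-\frac{1}{2}>-1$, so all functions involved are defined; this establishes \eqref{equation:GreenPlane1}. (Alternatively one could start from the spectral form \eqref{equation:HyperHeat1}, but that route requires an extra Mehler--Fock identity not available in the paper, so \eqref{equation:HyperHeat2} is preferable.)

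For \eqref{equation:GreenPlane2} I would pass to polar coordinates and invoke the hyperbolic law of cosines $\cosh(d(x,y))=\cosh(a)\cosh(b)-\sinh(a)\sinh(b)\cos(\alpha-\beta)$. Setting $z:=\cosh(a)$, $\omega:=\cosh(b)$, $\nu:=\sqrt{s}-\frac{1}{2}$ and $\theta:=|\alpha-\beta|\in[0,2\pi)$, one has $\sqrt{z^2-1}=\sinh(a)$, $\sqrt{\omega^2-1}=\sinh(b)$ (as $a,b>0$) and $\cos(\alpha-\beta)=\cos\theta$, so the argument of $Q_\nu$ in \eqref{equation:GreenPlane1} is exactly $\omega z-\sqrt{\omega^2-1}\sqrt{z^2-1}\cos\theta$. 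Corollary \ref{corollary:IntegralProduktLegendreFuerGreensFunction}, applicable because $\Re(\nu)>-\frac{1}{2}$, then rewrites $Q_\nu(\cosh(d))$ as $\frac{2}{\pi}\int_0^\infty \cosh(\rho(\pi-\theta))\,Q_\nu^{-i\rho}(z)Q_\nu^{i\rho}(\omega)\,d\rho$; multiplying by $\frac{1}{2\pi}$ and using \eqref{equation:GreenPlane1} yields precisely \eqref{equation:GreenPlane2}. I would note that the corollary covers every case with $x\neq y$: if $a\neq b$ then $z\neq\omega$, while if $a=b$ (so $z=\omega$) the points can only differ by angle, forcing $\alpha\neq\beta$ and hence $\theta\in(0,2\pi)$, which is exactly the diagonal case of the corollary.

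The one genuinely delicate point is the interchange of integration in \eqref{equation:GreenPlane1} for complex $s$. I would dispose of it by first treating real $s>\frac{1}{4}$, where the whole integrand is nonnegative and Tonelli applies directly, and then extending to all of $\mathcal{H}_{>\frac{1}{4}}$ by analyticity: the left-hand side is holomorphic in $s$ (differentiation under the Laplace integral), the right-hand side $\frac{1}{2\pi}Q_{\sqrt{s}-\frac{1}{2}}(\cosh(d))$ is holomorphic in $s$, and the two agree on the real ray, so they coincide on $\mathcal{H}_{>\frac{1}{4}}$ by the identity theorem. Everything else — the closed-form $t$-integral, the integral representation of $Q_\nu$, and the substitution via the law of cosines — is routine, with Part (2) being a direct application of the previously established Corollary \ref{corollary:IntegralProduktLegendreFuerGreensFunction}.
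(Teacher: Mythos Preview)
Your argument is correct, and for part \eqref{equation:GreenPlane2} it is identical to the paper's: law of cosines plus Corollary \ref{corollary:IntegralProduktLegendreFuerGreensFunction}, with the same case split $a\neq b$ versus $a=b,\theta\in(0,2\pi)$.

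For part \eqref{equation:GreenPlane1} you take a genuinely different route. The paper starts from the spectral representation \eqref{equation:HyperHeat1}, swaps the $t$- and $\rho$-integrals (justifying Fubini directly for complex $s$ via the bound $|P_{-1/2+i\rho}(\cosh d)|\le C/\sqrt{\rho}$), computes $\int_0^\infty e^{-(s+\rho^2)t}\,dt=\frac{1}{s+\rho^2}$, and then quotes the tabulated integral $\int_0^\infty \frac{\rho\tanh(\pi\rho)}{s+\rho^2}P_{-1/2+i\rho}(\cosh d)\,d\rho=Q_{\sqrt{s}-1/2}(\cosh d)$ from \cite[7.213]{Gradshteyn}. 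So your parenthetical remark is slightly off: the paper \emph{does} go through \eqref{equation:HyperHeat1}, it just outsources the last step to a table. Your route through \eqref{equation:HyperHeat2} is more self-contained: the $t$-integral is the elementary $K_{1/2}$-identity $\int_0^\infty t^{-3/2}e^{-st-\rho^2/(4t)}\,dt=\frac{2\sqrt\pi}{\rho}e^{-\rho\sqrt s}$, and the remaining $\rho$-integral is the classical Laplace representation $Q_\nu(\cosh d)=\frac{1}{\sqrt2}\int_d^\infty e^{-(\nu+\frac12)\rho}(\cosh\rho-\cosh d)^{-1/2}\,d\rho$. One small correction: that last formula is not \eqref{equation:RepresentationLegendreQIntegral} (which is the $\int_0^\pi$-representation of $Q_\nu^\mu$); you want e.g.\ \cite[8.715~2]{Gradshteyn} or \cite[3.7~(4)]{Erdelyi}. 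Your analytic-continuation handling of Fubini (Tonelli for real $s>\tfrac14$, then identity theorem) is also fine and a clean alternative to the paper's direct domination argument.
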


\begin{proof}
From \eqref{equation:HyperHeat1} we obtain
\begin{align}
\label{equation:IntegralformelShiftedHeatPlane}
K_{\mathbb{H}^2}^{\nicefrac{1}{4}}(x,y;t)=\frac{1}{2\pi}\int\limits_{0}^{\infty} e^{-  \rho^2 t}P_{-\frac{1}{2}+i\rho}\left( \cosh(d(x,y)) \right)\rho\tanh\left( \pi\rho \right)d\rho.
\end{align}
Thus using the Fubini-Tonelli theorem we get
\begin{align*}
G_{\mathbb{H}^2}^{\nicefrac{1}{4}}(x,y;s)&=\int\limits_{0}^{\infty} e^{-st}\left(\frac{1}{2\pi}\int\limits_{0}^{\infty} e^{-  \rho^2 t }P_{-\frac{1}{2}+i\rho}\left( \cosh(d(x,y)) \right)\rho\tanh\left( \pi\rho \right)d\rho \right) dt\\
&= \frac{1}{2\pi}\int\limits_{0}^{\infty} \left( \int\limits_{0}^{\infty}  e^{-\left( s+  \rho^2\right) t } dt\right) P_{-\frac{1}{2}+i\rho}\left( \cosh(d(x,y)) \right)\rho\tanh\left( \pi\rho \right)d\rho \\
&= \frac{1}{2\pi}\int\limits_{0}^{\infty} \frac{\rho\tanh\left( \pi\rho \right)}{s+\rho^2} P_{-\frac{1}{2}+i\rho}\left( \cosh(d(x,y)) \right)d\rho \\
&= \frac{1}{2\pi} Q_{\sqrt{s}-\frac{1}{2}}\left( \cosh(d(x,y)) \right),
\end{align*}
where the last equality can be found for example in \cite[7.213]{Gradshteyn} or \cite[p. 20]{Oberhettinger}. Let us explain shortly why the Fubini-Tonelli theorem is applicable, which we used in the second equality above. Because of \eqref{equation:LegendreAsymp2} and since $\rho\mapsto P_{-\frac{1}{2}+i\rho}\left( \cosh(d(x,y))\right)$ is continuous, there exists some constant $C>0$ such that for all $\rho\in (0,\infty)$: $\vert P_{-\frac{1}{2}+i\rho}\left( \cosh(d(x,y))\right) \vert\leq \frac{C}{\sqrt{\rho}}$. Hence,
\begin{align*}
\int\limits_{0}^{\infty}\int\limits_{0}^{\infty} \vert e^{-\left( s+  \rho^2\right) t }P_{-\frac{1}{2}+i\rho}\left( \cosh(d(x,y))\right) \rho\tanh\left( \pi\rho \right) \vert\, dt d\rho &\leq C  \int\limits_{0}^{\infty}\int\limits_{0}^{\infty} e^{-\left( \frac{1}{4} +  \rho^2\right)t} \cdot \sqrt{\rho}\, dt \, d\rho \\
&=C\int\limits_{0}^{\infty} \frac{\sqrt{\rho}}{\frac{1}{4}+\rho^2}\, d\rho<\infty.
\end{align*}
This completes the proof of equation \eqref{equation:GreenPlane1}.

The second formula \eqref{equation:GreenPlane2} follows from \eqref{equation:GreenPlane1} and Corollary \ref{corollary:IntegralProduktLegendreFuerGreensFunction} with $z:=\cosh(a)$, $\omega:=\cosh(b)$, and $\theta:=\vert \alpha-\beta \vert$, since the distance is given in the above polar coordinates by the following formula (see \cite[Theorem 2.2.1 (i)]{Buser}):
\begin{align}
\label{equation:HyperbolicDistancePolarCoordinates}
\cosh(d(x,y))=\cosh(a)\cosh(b)-\sinh(a)\sinh(b)\cos(\vert \alpha-\beta \vert).
\end{align}
Note that the assumptions of Corollary \ref{corollary:IntegralProduktLegendreFuerGreensFunction} are satisfied here because $x\neq y$ implies that either $a\neq b$ or $\vert \alpha-\beta \vert\in (0,2\pi)$.
\end{proof} 

\begin{remark}
Note that the function on the right-hand side of \eqref{equation:GreenPlane1} is defined for all $s\in\mathbb{C}\backslash (-\infty,0]$ and is also holomorphic on that domain. Hence, the function $\mathbb{C}\backslash (-\infty,0] \ni s\mapsto \frac{1}{2\pi}Q_{\sqrt{s}-\frac{1}{2}}\left( \cosh(d(x,y)) \right)\in\mathbb{C}$ is an analytic continuation of $\mathcal{H}_{>\frac{1}{4}}\ni s\mapsto G_{\mathbb{H}^2}^{\nicefrac{1}{4}}(x,y;s)\in\mathbb{C}$. Obviously, the same applies to the right-hand side of \eqref{equation:GreenPlane2}.
\end{remark}

\begin{definition}
\label{definition:Wedge}
A domain $W\subset \mathbb{H}^2$ is called a \emph{hyperbolic wedge} if there exist some $\gamma\in (0,2\pi]$ and geodesic polar coordinates $(a,\alpha)$ with respect to some base point $P\in\mathbb{H}^2$ (where the angle $\alpha$ is measured with respect to a chosen geodesic ray emanating from $P$ and a chosen orientation), such that $W$ is parametrised as
\begin{align*}
W=\{\, (a,\alpha) \mid 0<a<\infty,\, 0<\alpha <\gamma \,\}.
\end{align*}
The point $P$ is called the \emph{vertex} and $\gamma$ is called the \emph{angle} of the wedge. Thus, loosely speaking, a wedge is any domain bounded by two geodesic rays emanating from one point, its vertex. 

\end{definition}

We will now focus on the heat kernel $K_W$ and the Green's function $G_W$ of a hyperbolic wedge. Before we give a rigorous treatment of those functions, we will consider a heuristic derivation of the formula \eqref{equation:GreenWedge} for $G_W^{\nicefrac{1}{4}}$ given below. We want to point out that our heuristic derivation is analogous to a derivation given in \cite{Srisatkunarajah}\footnote{I am grateful to M. van den Berg for sending me the relevant pages of that work.}. In his doctoral thesis, Srisatkunarajah has worked out a proof of a formula due to D.B. Ray for the Green's function of a Euclidean wedge (this formula is published in \cite[p. 44]{McKean} and \cite[formula (2.5)]{VanDenBerg}). However, the proof given in \cite{Srisatkunarajah} is incomplete and has some argumentative gaps, which we will provide in the hyperbolic case by Lemma \ref{lemma:PropertiesOfH} and Theorem \ref{theorem:GreenWedge}. Note that our formula for the Green's function of a hyperbolic wedge is similar to Ray's formula. The only difference is that the Bessel functions appearing in Ray's formula are replaced properly with the associated Legendre functions of the second kind. However, in the Euclidean case all integrals appearing in the derivation and involving Bessel functions are well-known, such that their solutions could be used in \cite{Srisatkunarajah}. We, instead, will have to refer to Section \ref{section:Legendre} to solve the corresponding integrals for the associated Legendre functions of the second kind.

As mentioned, we proceed with a largely heuristic discussion of the Green's function for $W$ in order to motivate the formula \eqref{equation:GreenWedge} below. Recall that $\Delta=-\diverg \circ \nabla$ denotes the Laplacian. Suppose the heat kernel of the wedge $W$ can be written as
\begin{align}
\label{equation:AnsatzW-Kern}
K_W(x,y;t) = K_{\mathbb{H}^2}(x,y;t)-h(x,y;t),
\end{align}
where $h:\overline{W}\times W\times[0,\infty)\rightarrow\mathbb{R}$ is a function satisfying for all $y\in W$, the following conditions:
\begin{align}
\label{equation:PDE1}
\begin{cases}
h\left( \cdot,y;\cdot \right):\overline{W}\times [0,\infty)\rightarrow \mathbb{R}& \text{ is continuous, } \\
\left( \partial_t + \Delta \right)h(x,y;t) = 0& \text{ for all }x\in W \text{ and } t>0, \\
h(x,y;0)=0&  \text{ for all }x\in W, \\
h(x,y;t) = K_{\mathbb{H}^2}(x,y;t)& \text{ for all }x\in \partial W \text{ and } t>0.
\end{cases}
\end{align}

When we multiply both sides of equation \eqref{equation:AnsatzW-Kern} with $e^{\frac{t}{4}}$, we obtain the corresponding equation for the shifted functions. With $h^{\nicefrac{1}{4}}(x,y;t):=e^{\frac{t}{4}}h(x,y;t)$, we have
\begin{align}
\label{equation:AnsatzW-KernShift}
K_{W}^{\nicefrac{1}{4}}(x,y;t) = K_{\mathbb{H}^2}^{\nicefrac{1}{4}}(x,y;t)-h^{\nicefrac{1}{4}}(x,y;t)\quad \text{ for all } t>0\,\text{ and } x,y\in W.
\end{align}
When we apply the Laplace transform on both sides of \eqref{equation:AnsatzW-KernShift}, we obtain the equation
\begin{align}
\label{equation:AnsatzGreenShift}
G_W^{\nicefrac{1}{4}}(x,y;s) = G_{\mathbb{H}^2}^{\nicefrac{1}{4}}(x,y;s) - H^{\nicefrac{1}{4}}(x,y;s)
\end{align}
for all $(x,y)\in \offdiag(W)$ and $s\in\mathbb{C}$ with $\Re(s)>\frac{1}{4}$, where
\begin{align}
\label{equation:GreenShifted}
H^{\nicefrac{1}{4}}(x,y;s):=\mathcal{L}\lbrace h^{\nicefrac{1}{4}}(x,y;t) \rbrace (s).
 \end{align}
For any $s>\frac{1}{4}$ and $y\in W$ the function $H^{\nicefrac{1}{4}}(\cdot ,y;s)$ solves the following boundary value problem: 
\begin{align}
\label{equation:PDEShifted-H}
\begin{cases}
H^{\nicefrac{1}{4}}(\cdot,y;s): \overline{W}\rightarrow \mathbb{R} &\text{ is continuous and bounded,}\\
\left( s + \Delta-\frac{1}{4} \right)H^{\nicefrac{1}{4}}\left( x,y;s \right) = 0 &\text{ for all } x\in W,\\
H^{\nicefrac{1}{4}}\left( x,y;s \right) = G_{\mathbb{H}^2}^{\nicefrac{1}{4}}\left( x,y;s \right) & \text{ for all } x\in \partial W.
\end{cases}
\end{align}

We want to find a solution to the problem \eqref{equation:PDEShifted-H} in terms of polar coordinates, so let us choose for the rest of this section polar coordinates such that
\begin{align*}
W=\{\, (a,\alpha) \mid 0< a <\infty, \, 0<\alpha < \gamma \,\}.
\end{align*}
The Laplacian is given in polar coordinates by the formula
\begin{align*}
-\Delta = \frac{1}{\sinh(a)}\frac{\partial}{\partial a}\left\lbrace \sinh(a)\frac{\partial}{\partial a} \right\rbrace + \frac{1}{\sinh^2(a)}\frac{\partial^2}{\partial^2 \alpha},
\end{align*}
which follows from the well-known representation of the Laplacian in local coordinates (see e.g. \cite[formula $(3.40)$]{Grigoryan}) and since the Riemannian metric $g$ of the hyperbolic plane is represented in polar coordinates by $g=da^2 + \sinh^2(a)d\alpha^2$ (see \cite[formula $(3.70)$]{Grigoryan}). In the following, let $y=(b,\beta)\in W$ and $s>\frac{1}{4}$ both be fixed.

Firstly, we use the method of separation of variables to find product solutions to the partial differential equation (PDE) in \eqref{equation:PDEShifted-H}, i.e. we look for solutions of the form $u(a,\alpha)=v(a)\cdot w(\alpha)$. Substituting into the PDE, we obtain for all points $x=(a,\alpha)$ such that $u(a,\alpha)\neq 0:$
\begin{align*}
&\left( s + \Delta -\frac{1}{4}\right) u(a,\alpha) = 0  \\
\Leftrightarrow &\left( s-\frac{1}{4} \right)v(a)w(\alpha) - \frac{w(\alpha)}{\sinh(a)}\frac{\partial}{\partial a}\left\lbrace \sinh(a)v'(a) \right\rbrace  - \frac{1}{\sinh^2(a)} w''(\alpha)v(a)  = 0 \\
\Leftrightarrow &\left( s-\frac{1}{4} \right)\sinh^2(a) - \frac{\sinh(a)}{v(a)}\frac{\partial}{\partial a}\left\lbrace \sinh(a)v'(a) \right\rbrace = \frac{w''(\alpha)}{w(\alpha)}.
\end{align*} 

Therefore, both sides must be equal to some separation constant $c\in\mathbb{R}$. We assume $c$ to be non-negative and write $c=\rho^2$ with $\rho\geq 0$. The PDE now reduces to the following two ordinary differential equations:

\begin{itemize}
\item[I)] $w''(\alpha)=\rho^2 \cdot w(\alpha)$,
\item[II)] $\left( -s+\frac{1}{4} \right)\sinh^2(a)\cdot v(a) + \sinh(a) \frac{\partial}{\partial a}\left\lbrace \sinh(a)v'(a) \right\rbrace + \rho^2\cdot v(a) = 0.$
\end{itemize}

We first solve the second equation. When we multiply this equation by $\frac{1}{\sinh^2(a)}$, it can be written equivalently as:
\begin{align*}
v''(a) + \frac{\cosh(a)}{\sinh(a)}v'(a) + \left( \left( -s+\frac{1}{4} \right) - \frac{\left(i\rho\right)^2}{\sinh^2(a)} \right)\cdot v(a) = 0.
\end{align*}
Consider $\tilde{v}:=v\circ \arcosh :(1,\infty)\rightarrow\mathbb{C}$. When we substite $\tilde{v}$ into the above equation, we get the following equivalent differential equation:
\begin{align*}
(1-z^2)\tilde{v}''(z) -2z \tilde{v}'(z) + \left( \left( s - \frac{1}{4} \right) - \frac{\left(i\rho\right)^2}{1-z^2} \right)\cdot \tilde{v}(z) = 0\, \text{ for all } z\in (1,\infty).
\end{align*}
If $\nu:=\sqrt{s} - \frac{1}{2} $, then $\nu\cdot(\nu+1) = s-\frac{1}{4}$. This differential equation for $\tilde{v}$ is nothing else than the associated Legendre equation stated in \eqref{equation:LegendreDGL} with parameters $\nu=\sqrt{s}-\frac{1}{2}$ and $\mu=i\rho$. For all $\rho\geq 0$ two linearly independent solutions are (see \cite[\S 14.2(iii)]{NIST})
\begin{align*}
z\mapsto P_{\sqrt{s} - \frac{1}{2}}^{i\rho}(z)\,\text{ and }\, z\mapsto Q_{\sqrt{s} - \frac{1}{2} }^{-i\rho}(z).
\end{align*}
On the other hand, two linearly independent solutions for equation I) are the functions $\alpha\mapsto\cosh(\rho \alpha)$ and $\alpha\mapsto \sinh(\rho \alpha)$. Thus we have the following product solutions to the PDE:
\begin{align*}
x=(a,\alpha)\mapsto \left( \tilde{A}_1\cosh(\rho \alpha)+ \tilde{A}_2\sinh(\rho \alpha) \right)\cdot \left( \tilde{B}_1 P_{\sqrt{s} - \frac{1}{2}}^{i\rho}(\cosh(a)) + \tilde{B}_2 Q_{\sqrt{s} - \frac{1}{2}}^{-i\rho}(\cosh(a)) \right)
\end{align*}
with arbitrary constants $\tilde{A}_1, \tilde{A}_2, \tilde{B}_1, \tilde{B}_2\in\mathbb{C}$.

Secondly, we construct from the product solutions above a suitable solution $u=H^{\nicefrac{1}{4}}(\cdot,y;s):\overline{W}\rightarrow\mathbb{R}$ to the PDE which also meets the boundary condition in \eqref{equation:PDEShifted-H}. Since 
\begin{align*}
\lim\limits_{a\rightarrow\infty} \vert P_{\sqrt{s} - \frac{1}{2}}^{i\rho}(\cosh(a)) \vert =\infty
\end{align*}
(see e.g. \cite[14.8.12]{NIST}), we set $\tilde{B}_1=0$. Further, because of the formula \eqref{equation:GreenPlane2} for the boundary condition we try, by using the superposition principle, 
\begin{align*}
u(a,\alpha):=\int\limits_{0}^{\infty}\left( A_1(\rho)\cosh(\rho \alpha)+ A_2(\rho)\sinh(\rho \alpha) \right) Q_{\sqrt{s} - \frac{1}{2}}^{-i\rho}(\cosh(a)) d\rho.
\end{align*}
We determine suitable functions $A_1(\rho),\, A_2(\rho)$ from the boundary conditions. The boundary $\partial W$ of the wedge contains the two rays emanating from $P$, which are described in polar coordinates by
\begin{align*}
 \{\, x=(a,\alpha)\mid 0<a<\infty\, ;\, \alpha=0\, \text{ or }\, \alpha=\gamma \,\}.
\end{align*}
For $\alpha=0$ we have
\begin{align*}
u(a,0) &= \int\limits_{0}^{\infty}  A_1(\rho)Q_{ \sqrt{s} - \frac{1}{2}}^{-i\rho}(\cosh(a)) d\rho,\\
G_{\mathbb{H}^2}^{\nicefrac{1}{4}}((a,0),(b,\beta);s) &= \frac{1}{\pi^2}\int\limits_{0}^{\infty}\cosh(\rho(\pi-\beta))Q_{\sqrt{s} - \frac{1}{2}}^{-i\rho}(\cosh(a))Q_{ \sqrt{s} - \frac{1}{2}}^{i\rho}(\cosh(b)) d\rho.
\end{align*}
Therefore we set
\begin{align*}
A_1(\rho):=\frac{1}{\pi^2}\cosh(\rho(\pi-\beta))Q_{\sqrt{s} - \frac{1}{2} }^{i\rho}(\cosh(b)),
\end{align*}
such that the boundary condition at $\alpha=0$ is satisfied.

Moreover,
\begin{align*}
u(a,\gamma) &= \int\limits_{0}^{\infty} \lbrace A_1(\rho)\cosh(\rho\gamma) + A_2(\rho)\sinh(\rho\gamma)\rbrace Q_{ \sqrt{s} - \frac{1}{2}}^{-i\rho}(\cosh(a)) d\rho,\\
G_{\mathbb{H}^2}^{\nicefrac{1}{4}}((a,\gamma),(b,\beta);s) &= \frac{1}{\pi^2}\int\limits_{0}^{\infty}\cosh(\rho(\pi-\vert \gamma - \beta \vert))Q_{\sqrt{s} - \frac{1}{2}}^{-i\rho}(\cosh(a))Q_{ \sqrt{s} - \frac{1}{2}}^{i\rho}(\cosh(b)) d\rho.
\end{align*}
Obviously $\vert \gamma - \beta \vert=\gamma-\beta$ and thus we set:
\begin{align*}
A_2(\rho):=\frac{1}{\pi^2}Q_{\sqrt{s} - \frac{1}{2} }^{i\rho}(\cosh(b))\left(\frac{\cosh(\rho(\pi-(\gamma-\beta))) - \cosh(\rho(\pi-\beta))\cosh(\rho\gamma)}{\sinh(\rho\gamma)}\right).
\end{align*}
Finally we obtain the following candidate for a solution of \eqref{equation:PDEShifted-H}, defined for all $(a,\alpha)\in \overline{W}\backslash\{ P \}$ as:
\begin{align*}
u(a,\alpha) =&\, \frac{1}{\pi^2} \int\limits_{0}^{\infty} Q_{ \sqrt{s} - \frac{1}{2}}^{-i\rho}(\cosh(a))  Q_{ \sqrt{s} - \frac{1}{2}}^{i\rho}(\cosh(b))\Bigg( \cosh(\rho(\pi-\beta))\cosh(\rho\alpha) \\
& + \left( \cosh(\rho(\pi-(\gamma-\beta))) - \cosh(\rho(\pi-\beta))\cosh(\rho\gamma) \right)\frac{\sinh(\rho\alpha)}{\sinh(\rho\gamma)} \Bigg) d\rho.
\end{align*}
Now observe that the long expression in brackets above can be simplified, since
\begin{align}
&\cosh(\rho(\pi-\beta))\cosh(\rho\alpha) + \left( \cosh(\rho(\pi-(\gamma-\beta))) - \cosh(\rho(\pi-\beta))\cosh(\rho \gamma) \right)\frac{\sinh(\rho\alpha)}{\sinh(\rho\gamma)} \nonumber \\
&=\frac{\sinh(\pi\rho)}{\sinh(\gamma\rho)}\cosh(\rho(\gamma-\alpha-\beta)) - \frac{\sinh((\pi-\gamma)\rho)}{\sinh(\gamma\rho)}\cosh((\alpha-\beta)\rho). \label{equation:IdentitaetCosHyperbolicusMonster}
\end{align}
Thus we set for all $x=(a,\alpha)\in \overline{W}\backslash\{ P \}$:
\begin{align}
\label{equation:SolutionH}
H^{\nicefrac{1}{4}} (x,y;s) := & \, \frac{1}{\pi^2} \int\limits_{0}^{\infty} Q_{ \sqrt{s} - \frac{1}{2}}^{-i\rho}(\cosh(a))  Q_{ \sqrt{s} - \frac{1}{2}}^{i\rho}(\cosh(b)) \Bigg( \frac{\sinh(\pi\rho)}{\sinh(\gamma\rho)}\cosh(\rho(\gamma-\alpha-\beta)) \nonumber \\
&- \frac{\sinh((\pi-\gamma)\rho)}{\sinh(\gamma\rho)}\cosh((\alpha-\beta)\rho) \Bigg)d\rho.
\end{align}
Now, a candidate for a formula (in polar coordinates) of the shifted Green's function $G_W^{\nicefrac{1}{4}}$ can be deduced using \eqref{equation:AnsatzGreenShift}, \eqref{equation:GreenPlane2} and \eqref{equation:SolutionH}. That formula is stated explicitly in Theorem \ref{theorem:GreenWedge} below. In order to give a rigorous proof we first study the function $H^{\nicefrac{1}{4}}$, which is done in the following lemma. 

\begin{lemma}
\label{lemma:PropertiesOfH}
Consider the function 
\begin{align*}
H^{\nicefrac{1}{4}}:W\times W\times\mathbb{C}\backslash(-\infty,0]\ni (x,y, s)\mapsto H^{\nicefrac{1}{4}}(x,y;s)\in \mathbb{C},
\end{align*}
where $H^{\nicefrac{1}{4}}(x,y;s)$ is defined by the right-hand side of \eqref{equation:SolutionH} with $x=(a,\alpha)$ and $y=(b,\beta)$ \emph{(}with respect to the polar coordinates chosen above\emph{)}. Then:
\begin{itemize}
\item[$(i)$] For all $x,y\in W$ the function $\mathbb{C}\backslash(-\infty,0]\ni s\mapsto H^{\nicefrac{1}{4}}(x,y;s)\in \mathbb{C}$ is holomorphic.
\item[$(ii)$] For all $s\in\mathbb{C}$ with $\Re(s)>0$ the function
\begin{align*}
W\times W \ni (x,y)\mapsto H^{\nicefrac{1}{4}}(x,y;s)\in \mathbb{R}
\end{align*}
is continuous. Moreover, $H^{\nicefrac{1}{4}}(x,y;s)$ is real valued for all $s>0$ and $x,y\in W$. Further, for all $s>\frac{1}{4}$ and $y\in W$ the function $W\ni x\mapsto H^{\nicefrac{1}{4}}(x,y;s)\in\mathbb{R}$ is smooth and $(s+\Delta^{\nicefrac{1}{4}})H^{\nicefrac{1}{4}}(\cdot,y;s) \equiv 0$ \emph{(}recall that $\Delta^{\nicefrac{1}{4}}=\Delta-\frac{1}{4}$\emph{)}.
\item[$(iii)$] For all $s>\frac{1}{4}$ and $y\in W$ the function $W\ni x\mapsto H^{\nicefrac{1}{4}}(x,y;s)\in \mathbb{R}$ can be extended continuously to $\overline{W}$. That extension \emph{(}which we also denote by $H^{\nicefrac{1}{4}}$\emph{)} satisfies $H^{\nicefrac{1}{4}}(x,y;s)=G_{\mathbb{H}^2}^{\nicefrac{1}{4}}(x,y;s)$ for all $x\in \partial W$. Furthermore, we have $0 < H^{\nicefrac{1}{4}}(x,y;s)$ for all $x\in W$ and $H^{\nicefrac{1}{4}}(x,y;s)\leq G_{\mathbb{H}^2}^{\nicefrac{1}{4}}(x,y;s)$ for all $x\in W\backslash\{y\}$.
\end{itemize}
\end{lemma}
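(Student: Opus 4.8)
The plan is to handle the three parts in order, throughout writing $\nu:=\sqrt{s}-\frac{1}{2}$ (so that $\Re(\nu)>-\frac{1}{2}$ whenever $s\in\mathbb{C}\backslash(-\infty,0]$) and abbreviating the bracket in \eqref{equation:SolutionH} by
\[
h_{\alpha,\beta}(\rho):=\frac{\sinh(\pi\rho)}{\sinh(\gamma\rho)}\cosh(\rho(\gamma-\alpha-\beta))-\frac{\sinh((\pi-\gamma)\rho)}{\sinh(\gamma\rho)}\cosh((\alpha-\beta)\rho).
\]
The apparent singularity at $\rho=0$ is removable, so $h_{\alpha,\beta}$ is continuous on $[0,\infty)$, and a short computation with $|\sinh(c\rho)|,|\cosh(c\rho)|\le e^{|c|\rho}$ shows $h_{\alpha,\beta}(\rho)=O\big(e^{(\pi-\varepsilon)\rho}\big)$ for some $\varepsilon>0$: the point is that for $0<\alpha,\beta<\gamma$ the two exponents $\pi-\gamma+|\gamma-\alpha-\beta|$ and $|\pi-\gamma|+|\alpha-\beta|-\gamma$ are both strictly less than $\pi$. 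For part $(i)$ I would note that $s\mapsto\nu$ is holomorphic from $\mathbb{C}\backslash(-\infty,0]$ into $\mathcal{H}_{>-\frac{1}{2}}$, so $(i)$ reduces to the holomorphy of $\nu\mapsto\int_0^\infty Q_\nu^{-i\rho}(\cosh a)Q_\nu^{i\rho}(\cosh b)\,h_{\alpha,\beta}(\rho)\,d\rho$ on $\mathcal{H}_{>-\frac{1}{2}}$, which is precisely Lemma \ref{lemma:HolomorphieIntegralLegendreProdukt} applied with $z=\cosh a$, $\omega=\cosh b$ and $h=h_{\alpha,\beta}$, whose growth hypothesis we just checked.

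For part $(ii)$, continuity of $(x,y)\mapsto H^{\nicefrac{1}{4}}(x,y;s)$ for fixed $s$ with $\Re(s)>0$ comes from dominated convergence: on a compact subset of $W\times W$ the radii $a,b$ stay in a compact subinterval of $(0,\infty)$ and $\alpha,\beta$ stay bounded away from $0$ and $\gamma$, so the growth exponent of $h_{\alpha,\beta}$ is uniformly below $\pi$, while \eqref{equation:EstimateProductLegendreFunctions} dominates $|Q_\nu^{-i\rho}(\cosh a)Q_\nu^{i\rho}(\cosh b)|$; the integrand is then bounded by an integrable $Ce^{-\varepsilon\rho}$ independent of the point and is continuous in $(x,y)$. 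Real-valuedness for real $s>0$ (hence real $\nu$) follows since $\overline{Q_\nu^{-i\rho}(z)Q_\nu^{i\rho}(\omega)}=Q_\nu^{i\rho}(z)Q_\nu^{-i\rho}(\omega)$ for real $\nu$ and $z,\omega>1$, which equals $Q_\nu^{-i\rho}(z)Q_\nu^{i\rho}(\omega)$ by the symmetry \eqref{equation:SymmetrieProduktLegendreQ}, and $h_{\alpha,\beta}$ is real. For the PDE, the separation-of-variables construction pays off: by design each integrand $(a,\alpha)\mapsto Q_\nu^{-i\rho}(\cosh a)Q_\nu^{i\rho}(\cosh b)\,h_{\alpha,\beta}(\rho)$ solves $(s+\Delta^{\nicefrac{1}{4}})u=0$, so differentiating \eqref{equation:SolutionH} twice under the integral sign gives $(s+\Delta^{\nicefrac{1}{4}})H^{\nicefrac{1}{4}}\equiv0$; I would justify the differentiation with integrable majorants for the $a$- and $\alpha$-derivatives of the integrand (obtained from \eqref{equation:EstimateProductLegendreFunctions} via Cauchy estimates in the complex $z$-variable, in which the Legendre functions are holomorphic), and smoothness then follows from interior elliptic regularity.

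Part $(iii)$ is the heart of the matter. For the boundary values I would compute $h_{0,\beta}$ and $h_{\gamma,\beta}$ via product-to-sum identities; a one-line manipulation gives $h_{0,\beta}(\rho)=\cosh((\pi-\beta)\rho)$ and $h_{\gamma,\beta}(\rho)=\cosh((\pi-(\gamma-\beta))\rho)$, exactly the kernels in \eqref{equation:GreenPlane2} for $x$ on the two rays, so \eqref{equation:SolutionH} restricted to $\partial W$ equals $G_{\mathbb{H}^2}^{\nicefrac{1}{4}}$. Continuity of the extension up to the open rays is again dominated convergence, since for fixed $\beta\in(0,\gamma)$ the growth exponent of $h_{\alpha,\beta}$ tends to $\pi-\beta<\pi$ as $\alpha\to0^+$ and to $\pi-(\gamma-\beta)<\pi$ as $\alpha\to\gamma^-$, staying uniformly below $\pi$. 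The positivity and the bound $H^{\nicefrac{1}{4}}\le G_{\mathbb{H}^2}^{\nicefrac{1}{4}}$ I would then extract from the maximum principle for $s+\Delta^{\nicefrac{1}{4}}=(s-\tfrac14)+\Delta$, whose zeroth-order coefficient is strictly positive for $s>\tfrac14$: the strong maximum principle, applied to $H^{\nicefrac{1}{4}}$ (a solution in $W$, continuous up to $\partial W$ with strictly positive data $G_{\mathbb{H}^2}^{\nicefrac{1}{4}}$), forces $H^{\nicefrac{1}{4}}>0$ in $W$; and comparing $G_{\mathbb{H}^2}^{\nicefrac{1}{4}}-H^{\nicefrac{1}{4}}$ (which solves the equation on $W\backslash\{y\}$, vanishes on $\partial W$, and blows up at $y$ because $Q_\nu$ has a logarithmic pole at $\cosh 0=1$) against $0$ on $W\backslash\overline{B_\epsilon(y)}$ and letting $\epsilon\to0$ yields the upper bound.

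I expect the genuine difficulties to lie in two points inside $(iii)$. The first is continuity of the extension at the vertex $a\to0$ (and control as $a\to\infty$): there $\cosh a\to1$, exactly where the majorants of Lemma \ref{lemma:EstimateProductLegendreFunctions} degenerate, so naive dominated convergence breaks down; my plan is to circumvent this by first establishing the two-sided bound $0<H^{\nicefrac{1}{4}}\le G_{\mathbb{H}^2}^{\nicefrac{1}{4}}$ near the vertex and then squeezing $H^{\nicefrac{1}{4}}$ to its boundary value using the continuity of $G_{\mathbb{H}^2}^{\nicefrac{1}{4}}(\cdot,y;s)$ at $P$, supplemented by a barrier to pin the lower limit. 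The second is making the maximum-principle argument rigorous on the \emph{unbounded} wedge, which needs decay of $H^{\nicefrac{1}{4}}$ and $G_{\mathbb{H}^2}^{\nicefrac{1}{4}}$ as $a\to\infty$ (a Phragmén–Lindelöf type input) before the comparison can be invoked; this decay I would supply from the estimates of Lemma \ref{lemma:SecondEstimateProductLegendreFunctions}.
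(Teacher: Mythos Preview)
Your treatment of parts $(i)$ and $(ii)$ is essentially the paper's: holomorphy via Lemma~\ref{lemma:HolomorphieIntegralLegendreProdukt}, continuity via dominated convergence with the bound \eqref{equation:EstimateProductLegendreFunctions}, and the PDE from the separation-of-variables structure (the paper passes through a weak formulation and elliptic regularity rather than differentiating directly, but either route is fine). Your real-valuedness argument via \eqref{equation:SymmetrieProduktLegendreQ} is cleaner than the paper's reference to NIST.

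The genuine problem is your plan at the vertex in $(iii)$. You propose to establish the two-sided bound $0<H^{\nicefrac{1}{4}}\le G_{\mathbb{H}^2}^{\nicefrac{1}{4}}$ first and then squeeze to get the limit at $P$. But the maximum-principle step itself needs $H^{\nicefrac{1}{4}}(\cdot,y;s)$ to be continuous up to \emph{all} of $\partial W$, vertex included, with known (positive) boundary data there; excising a disc $B_\epsilon(P)$ leaves you with an interior arc on which the sign of $H^{\nicefrac{1}{4}}$ is exactly what you do not yet control, so the argument becomes circular. And even granting $0\le H^{\nicefrac{1}{4}}\le G_{\mathbb{H}^2}^{\nicefrac{1}{4}}$, squeezing only gives boundedness near $P$, not convergence to the specific value $G_{\mathbb{H}^2}^{\nicefrac{1}{4}}(P,y;s)$; your unspecified ``barrier'' would have to do all the work.

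The paper resolves the vertex differently and explicitly. It splits $H^{\nicefrac{1}{4}}=I_1-I_2$ according to the two terms of $h_{\alpha,\beta}$, and to each applies Theorem~\ref{theorem:TheoremZentralIntegral} (with $g(z)=\cos(z(\gamma-\alpha-\beta))/\sin(\gamma z)$, respectively the analogue for $I_2$) to convert the $\rho$-integral into a residue series $\sum_k c_k\,P_{\nu}^{-p_k}(\cosh a)\,Q_{\nu}^{p_k}(\cosh b)$ plus a $k=0$ term. Lemma~\ref{lemma:SecondEstimateProductLegendreFunctions} then bounds each summand by a constant times $(D\sinh(a/2))^{p_k}$, so the whole series is dominated by a geometric series that tends to $0$ as $a\searrow 0$, uniformly in $\alpha$; only the zero-order term survives and yields $\lim_{x\to P}H^{\nicefrac{1}{4}}(x,y;s)=\frac{1}{2\pi}Q_{\sqrt{s}-\frac{1}{2}}(\cosh b)=G_{\mathbb{H}^2}^{\nicefrac{1}{4}}(P,y;s)$. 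With continuity on all of $\overline{W}$ in hand, the maximum-principle arguments for positivity and the upper bound go through on truncated domains $W\cap B_R(P)$ exactly as you sketched. A small correction: Lemma~\ref{lemma:SecondEstimateProductLegendreFunctions} is used for the vertex series, not for decay at infinity; the decay as $a\to\infty$ comes directly from the $(\cosh a-1)^{-1/2}$ factor in \eqref{equation:EstimateProductLegendreFunctions}.
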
 

\begin{proof}
$(i)$. Let $x,y\in W$ be arbitrary. Obviously, the function $H^{\nicefrac{1}{4}}(x,y;\cdot)$ can be written as a composition of holomorphic functions due to Lemma \ref{lemma:HolomorphieIntegralLegendreProdukt}. Note that the function $ s\mapsto \sqrt{s}-\frac{1}{2}$ is holomorphic on the domain $\mathbb{C}\backslash(-\infty,0]$, where that domain is mapped onto $\mathcal{H}_{>-\frac{1}{2}}$. Therefore, Lemma \ref{lemma:HolomorphieIntegralLegendreProdukt} can indeed be applied here; the other conditions of Lemma \ref{lemma:HolomorphieIntegralLegendreProdukt} are obviously satisfied with $h(\rho) := \frac{1}{\pi^2} \cdot \big( \frac{\sinh(\pi\rho)}{\sinh(\gamma\rho)}\cosh(\rho(\gamma-\alpha-\beta))- \frac{\sinh((\pi-\gamma)\rho)}{\sinh(\gamma\rho)}\cosh((\alpha-\beta)\rho) \big)$.

$(ii)$. Suppose $s\in\mathbb{C}$ with $\Re(s)>0$ is given. We write $H^{\nicefrac{1}{4}}(\cdot,\cdot ;s)$ as
\begin{align*}
H^{\nicefrac{1}{4}}(x,y ;s) = \int\limits_{0}^{\infty} \psi(x,y,\rho) d\rho \,\, \text{ for all } x,y\in W
\end{align*}
with
\begin{align*}
\psi(x,y,\rho):=  \frac{1}{\pi^2} \, Q_{ \sqrt{s} - \frac{1}{2}}^{-i\rho}(\cosh(a))  Q_{ \sqrt{s} - \frac{1}{2}}^{i\rho}(\cosh(b)) &\Bigg( \frac{\sinh(\pi\rho)}{\sinh(\gamma\rho)}\cosh(\rho(\gamma-\alpha-\beta)) \\
&- \frac{\sinh((\pi-\gamma)\rho)}{\sinh(\gamma\rho)}\cosh((\alpha-\beta)\rho) \Bigg)
\end{align*}
for all $x,y\in W$ and $\rho>0$. We also set $\psi(x,y,0):=\lim_{\rho\searrow 0}\psi(x,y,\rho)$ for all $x,y\in W$, where the limit obviously exists.

First of all, note that for all $x,y\in W$ the function $\rho\mapsto \psi(x,y,\rho)$ is absolutely integrable over $[0,\infty)$ (see Theorem \ref{theorem:TheoremZentralIntegral}). Thus, the functions
\begin{align*}
q: W\times W \ni (x,y)\mapsto \int\limits_{0}^{\infty} \vert \psi(x,y,\rho) \vert \, d\rho \in [0,\infty)
\end{align*}
and $H^{\nicefrac{1}{4}}(\cdot,\cdot;s)$ are continuous, which is an immediate consequence of Lebesgue's dominated convergence theorem and \eqref{equation:EstimateProductLegendreFunctions}.

Suppose now $s>0$. Note that $Q_{ \sqrt{s} - \frac{1}{2}}^{-i\rho}(\cosh(a))  Q_{ \sqrt{s} - \frac{1}{2}}^{i\rho}(\cosh(b))$ is real valued for all $a,b>0$ and $\rho\geq 0$. This can be seen from \eqref{equation:ConditionsSchritt1.1} and \cite[\S 14.20]{NIST} (see the comment preceding formula $14.20.6$ of \cite{NIST}). Thus, $\psi(x,y,\rho)$ is real valued for all $x,y\in W$ and $\rho\geq 0$ and, consequently, $H^{\nicefrac{1}{4}}(x,y ;s)$ is also real valued for all $x,y\in W$. Also note that for all $\rho\geq0$ the function $W\times W \ni (x,y)\mapsto \psi(x,y,\rho)\in\mathbb{R}$ is smooth. In the following, we deal with the other (less obvious) properties of $H^{\nicefrac{1}{4}}$ stated in $(ii)$.

We suppose now $s>\frac{1}{4}$. First, we show that for all $x\in W$ the function $H^{\nicefrac{1}{4}}(x,\cdot ;s)$ is smooth, which can be seen as follows: For all $x\in W$ the function $W \ni y \mapsto H^{\nicefrac{1}{4}}(x,y;s)\in\mathbb{R}$ belongs to $L_{\text{loc}}^2(W)$ as any continuous function, this means (by definition of the space $L_{\text{loc}}^2(W)$, see \cite[p. 98]{Grigoryan}), $H^{\nicefrac{1}{4}}(x,\cdot;s)\in L^2(U)$ for any relatively compact open set $U\subset W$. Moreover, for all $f\in C_{c}^{\infty}(W)$ and $x\in W$
\begin{align}
\int\limits_W H^{\nicefrac{1}{4}}(x,y;s) \cdot (sf+\Delta^{\nicefrac{1}{4}} f)(y) \,dy &= \int\limits_W \left( \int\limits_{0}^{\infty} \psi(x,y,\rho) d\rho \right) \cdot (sf+\Delta^{\nicefrac{1}{4}} f) (y) \, dy \nonumber \\
& =  \int\limits_{0}^{\infty}  \int\limits_W \psi(x,y,\rho) \cdot (sf+\Delta^{\nicefrac{1}{4}} f) (y) \, dy  \,    d\rho \nonumber \\
& = \int\limits_{0}^{\infty}  \int\limits_W  \underbrace{(s\psi(x,\cdot,\rho) + \Delta^{\nicefrac{1}{4}} \psi(x,\cdot,\rho))}_{\equiv 0}(y) \cdot f(y) \, dy  \,    d\rho \nonumber \\
&= 0. \label{equation:HSmoothFunction}
\end{align}
Note that we used the Fubini-Tonelli theorem for the second equality which is possible because $q(x,\cdot)$ is continuous for all $x\in W$, and $\vert(s+\Delta^{\nicefrac{1}{4}})f \vert$ is compactly supported as well as bounded, and thus
\begin{align*}
\int\limits_W  \int\limits_{0}^{\infty} \vert \psi(x,y,\rho)\cdot(sf+\Delta^{\nicefrac{1}{4}} f)(y)\vert  \,d\rho \,  dy = \int\limits_W q(x,y) \cdot \vert(s f+\Delta^{\nicefrac{1}{4}} f)(y) \vert \, dy <\infty.
\end{align*}
Further, we used ``Green's formula'' for the third equality (see \cite[Theorem 3.16]{Grigoryan}). Lastly, note that $(s+\Delta^{\nicefrac{1}{4}})\psi(x,\cdot,\rho)\equiv 0$ by the discussion preceding Lemma \ref{lemma:PropertiesOfH}. Thus, by elliptic regularity (see \cite[Corollary 7.3]{Grigoryan}), the function $y\mapsto H^{\nicefrac{1}{4}}(x,y;s)$ is smooth. 

Note that, since $H^{\nicefrac{1}{4}}(x,y;s)=H^{\nicefrac{1}{4}}(y,x;s)$ for all $x,y\in W$ (recall equation \eqref{equation:SymmetrieProduktLegendreQ}), the function $W\ni x\mapsto H^{\nicefrac{1}{4}}(\cdot,y;s)\in\mathbb{R}$ is smooth for all $y\in W$, too.

Furthermore, by \eqref{equation:HSmoothFunction} and Green's formula, it follows that $(s+\Delta^{\nicefrac{1}{4}})H^{\nicefrac{1}{4}}(x,\cdot;s)\equiv 0$ for arbitrary $x\in W$. More precisely, for all $x\in W$ and $f\in C_{c}^{\infty}(W)$:
\begin{align*}
0 = \int\limits_W H^{\nicefrac{1}{4}}(x,y;s) \cdot (sf+\Delta^{\nicefrac{1}{4}} f)(y) \,dy = \int\limits_W f(y) \cdot (sH^{\nicefrac{1}{4}}(x,\cdot;s)+\Delta^{\nicefrac{1}{4}} H^{\nicefrac{1}{4}}(x,\cdot;s))(y) dy.
\end{align*}
Because $f\in C_{c}^{\infty}(W)$ was arbitrary and $H^{\nicefrac{1}{4}}(x,\cdot;s)$ is smooth, it follows for all $x\in W$: $(s+\Delta^{\nicefrac{1}{4}})H^{\nicefrac{1}{4}}(x,\cdot;s)\equiv 0$ (see \cite[Lemma 3.13]{Grigoryan}). Finally, because of the symmetry $H^{\nicefrac{1}{4}}(x,y;s)=H^{\nicefrac{1}{4}}(y,x;s)$ for all $x,y\in W$, we also have $(s+\Delta^{\nicefrac{1}{4}})H^{\nicefrac{1}{4}}(\cdot,y;s)\equiv 0$ for arbitrary $y\in W$.

$(iii)$. Let $s>\frac{1}{4}$ and let $y\in W$ be fixed. We want to show that the function $W\ni x\mapsto H^{\nicefrac{1}{4}}(x,y;s)\in\mathbb{R}$ can be extended continuously to $\overline{W}$. Suppose $x\in\partial W$ is any boundary point other than the vertex of $W$, i.e. in polar coordinates $x=(a,\alpha)$ with $a\in (0,\infty)$ and $\alpha\in \{ 0, \gamma \}$. For those boundary points, we define the value of $H^{\nicefrac{1}{4}}(x,y;s)$ by the formula given on the right-hand side of \eqref{equation:SolutionH}. Note that the integral which is obtained after setting $\alpha=0$ or $\alpha=\gamma$ on the right-hand side of \eqref{equation:SolutionH} is absolutely convergent (see Theorem \ref{theorem:TheoremZentralIntegral}). Further, that continuation indeed provides a continuous function $W\backslash\{P\}\ni x\mapsto H^{\nicefrac{1}{4}}(x,y;s)\in\mathbb{R}$ (recall that $P$ denotes the vertex of the wedge). This can easily be seen by using Lebesgue's dominated convergence theorem and \eqref{equation:EstimateProductLegendreFunctions}. Lastly, note that $H^{\nicefrac{1}{4}}(x,y;s) = G_{\mathbb{H}^2}^{\nicefrac{1}{4}}(x,y;s)$ for all $x\in \partial W\backslash\{P\}$, which is obvious when using \eqref{equation:IdentitaetCosHyperbolicusMonster}.

On the contrary, it is more challenging to show that $H^{\nicefrac{1}{4}}(\cdot,y;s)$ can also be extended continuously at the vertex $P$. Note for example, that the formula on the right-hand side of \eqref{equation:SolutionH} has no meaning for $a=0$ because $Q^{-i\rho}_{\sqrt{s}-\frac{1}{2}}(z)$ is formally not defined for $z=1$ (also the limit of $Q^{-i\rho}_{\sqrt{s}-\frac{1}{2}}(\cosh(a))$ as $a\searrow 0$ does not exist). However, we will show that
\begin{align}
\label{equation:WertSpitzenpunkt}
\lim\limits_{x\rightarrow P} H^{\nicefrac{1}{4}}(x,y;s) = \frac{1}{2\pi} Q_{\sqrt{s}-\frac{1}{2}}(\cosh(b)).
\end{align}
For that purpose, let us write $H^{\nicefrac{1}{4}}(x,y;s) = I_1(x) - I_2(x)$ for all $x\in W$ with
\begin{align*}
I_1(x):&=\frac{1}{\pi^2}\int\limits_{0}^{\infty} Q_{ \sqrt{s} - \frac{1}{2}}^{-i\rho}(\cosh(a))  Q_{ \sqrt{s} - \frac{1}{2}}^{i\rho}(\cosh(b))\cdot \frac{\sinh(\pi\rho)}{\sinh(\gamma\rho)}\cosh(\rho(\gamma-\alpha-\beta)) d\rho,\\
I_2(x):&=\frac{1}{\pi^2}\int\limits_{0}^{\infty} Q_{ \sqrt{s} - \frac{1}{2}}^{-i\rho}(\cosh(a))  Q_{ \sqrt{s} - \frac{1}{2}}^{i\rho}(\cosh(b))\cdot \frac{\sinh((\pi-\gamma)\rho)}{\sinh(\gamma\rho)}\cosh((\alpha-\beta)\rho) d\rho.
\end{align*}
First, we will consider the limit of $I_1(x)$ as $x\rightarrow P$. Note that $I_1$ can be written as
\begin{align*}
I_1(x) = \frac{1}{2}\cdot \frac{2}{\pi} \int\limits_{0}^{\infty} Q_{ \sqrt{s} - \frac{1}{2}}^{-i\rho}(\cosh(a))  Q_{ \sqrt{s} - \frac{1}{2}}^{i\rho}(\cosh(b))\cdot \frac{\sin(i\pi\rho)}{\pi} \cdot \frac{\cos(i\rho(\gamma-\alpha-\beta))}{\sin(i\gamma\rho)} d\rho.
\end{align*}
Since we are only interested in the limit of $I_1(x)$ as $x=(a,\alpha)\rightarrow P$ we may assume in the followig $a<b$. The idea is to apply Theorem \ref{theorem:TheoremZentralIntegral} with $g(z)=\frac{\cos(z(\gamma-\alpha-\beta))}{\sin(z\gamma)}$. The only singularities of $g$ are simple poles at $k\frac{\pi}{\gamma}$ with $k\in\mathbb{Z}$ with residue $\Res(g;k\frac{\pi}{\gamma})=\frac{(-1)^k}{\gamma}\cos\left(\frac{\pi k}{\gamma}(\gamma-\alpha-\beta)\right)$. Hence, by Theorem \ref{theorem:TheoremZentralIntegral},
\begin{align}
I_1(x) =\, &\sum\limits_{k=1}^{\infty} \frac{(-1)^k}{\gamma} \cos\left(\frac{\pi k}{\gamma}(\gamma-\alpha-\beta)\right)e^{-i\pi k \frac{\pi}{\gamma}}P_{ \sqrt{s} - \frac{1}{2}}^{-k\frac{\pi}{\gamma}}(\cosh(a))  Q_{ \sqrt{s} - \frac{1}{2}}^{k\frac{\pi}{\gamma}}(\cosh(b)) \,+ \nonumber\\
&+ \frac{1}{2\gamma} P_{ \sqrt{s} - \frac{1}{2}}(\cosh(a))  Q_{ \sqrt{s} - \frac{1}{2}}(\cosh(b)). \label{equation:TheoremAufI1}
\end{align}
Note that
\begin{align}
\label{equation:BehaviourSingularityOneLegendreP}
\lim\limits_{a\searrow 0}P_{ \sqrt{s} - \frac{1}{2}}(\cosh(a)) = 1,
\end{align}
(see \cite[formula 14.8.7 on p. 361]{NIST}) and thus
\begin{align*}
\lim\limits_{a\searrow 0} \frac{1}{2\gamma} P_{ \sqrt{s} - \frac{1}{2}}(\cosh(a))  Q_{ \sqrt{s} - \frac{1}{2}}(\cosh(b)) = \frac{1}{2\gamma} Q_{ \sqrt{s} - \frac{1}{2}}(\cosh(b)).
\end{align*}
Now, we want to show that the series appearing on the right-hand side of \eqref{equation:TheoremAufI1} converges to zero as $a\searrow 0$, uniformly for all $\alpha\in (0,\gamma)$. By Lemma \ref{lemma:SecondEstimateProductLegendreFunctions} $(i)$, there exist constants $C,D>0$ such that for all $\mu>0$ and $a\in\mathbb{R}$ with $0<a< 2\arsinh\left(\frac{1}{2D}\right) <\frac{1}{D}$ (the last inequality follows because $\arsinh(x)<x$ for all $x>0$):
\begin{align*}
\vert  P_{\sqrt{s}-\frac{1}{2}}^{-\mu}(\cosh(a))Q_{\sqrt{s}-\frac{1}{2}}^{\mu}(\cosh(b)) &\vert \leq Ce^{\sqrt{s}a} \cdot  \mu^{\sqrt{s}} \cdot \left(D\sinh\left(\frac{a}{2}\right)\right)^{\mu}\\
&\leq Ce^{\frac{\sqrt{s}}{D}}\left(\frac{1}{2}\right)^{\frac{\mu}{2}} \cdot  \mu^{\sqrt{s}} \cdot \left(D\sinh\left(\frac{a}{2}\right)\right)^{\frac{\mu}{2}}\\
&\leq \hat{C}\cdot \left(D\sinh\left(\frac{a}{2}\right)\right)^{\frac{\mu}{2}},
\end{align*}
where $\hat{C}:=Ce^{\frac{\sqrt{s}}{D}}\cdot \sup_{\mu>0}\big\{ \left(\frac{1}{2}\right)^{\frac{\mu}{2}}\mu^{\sqrt{s}} \,\big\} \in (0,\infty)$. Thus, for all $\alpha\in (0,\gamma)$ and $a\in\mathbb{R}$ with $0<a< 2  \arsinh\left(\frac{1}{2D}\right)$, the absolute value of the series in \eqref{equation:TheoremAufI1} is bounded from above by
\begin{align*}
&\quad\, \sum\limits_{k=1}^{\infty} \bigg\vert \frac{(-1)^k}{\gamma} \cos\left(\frac{\pi k}{\gamma}(\gamma-\alpha-\beta)\right)e^{-i\pi k \frac{\pi}{\gamma}}P_{ \sqrt{s} - \frac{1}{2}}^{-k\frac{\pi}{\gamma}}(\cosh(a))  Q_{ \sqrt{s} - \frac{1}{2}}^{k\frac{\pi}{\gamma}}(\cosh(b)) \bigg\vert \\
&\leq \, \frac{\hat{C}}{\gamma}\sum\limits_{k=1}^{\infty}\left(\left(D\sinh\left(\frac{a}{2}\right)\right)^{\frac{\pi}{2\gamma}}\right)^{k} = \frac{\hat{C}}{\gamma} \cdot \frac{ \left( D\sinh\left(\frac{a}{2}\right)\right)^{\frac{\pi}{2\gamma}}}{1-\left(D\sinh\left(\frac{a}{2}\right)\right)^{\frac{\pi}{2\gamma}}}.
\end{align*}
Obviously, that upper bound holds uniformly for all $\alpha\in(0,\gamma)$ and converges to $0$ as $a\searrow 0$. We conclude 
\begin{align}
\label{equation:LimitI1}
\lim\limits_{x\rightarrow P}I_1(x)=\frac{1}{2\gamma} Q_{ \sqrt{s} - \frac{1}{2}}(\cosh(b)).
\end{align}

Next, we consider $I_2(x)$ as $x\rightarrow P$. If $\gamma=\pi$ then $I_2(x) = 0$ for all $x\in W$ and thus \eqref{equation:WertSpitzenpunkt} follows. Suppose now $\gamma\neq \pi$. We write $I_2$ as
\begin{align*}
I_2(x) = \frac{1}{\pi}\int\limits_{0}^{\infty}Q_{ \sqrt{s} - \frac{1}{2}}^{-i\rho}(\cosh(a))  Q_{ \sqrt{s} - \frac{1}{2}}^{i\rho}(\cosh(b)) \frac{\sin(i\pi \rho)}{\pi} \cdot \frac{\sin((\pi-\gamma)i\rho)\cos((\alpha-\beta)i\rho)}{\sin(i\pi \rho)\sin(\gamma i\rho)} d\rho.
\end{align*}
This time, we will apply Theorem \ref{theorem:TheoremZentralIntegral} with
\begin{align*}
g(z)=\frac{\sin((\pi-\gamma)z)\cos((\alpha-\beta)z)}{\sin(\pi z)\sin(\gamma z)} = \frac{\cos(\gamma z)\cos((\alpha-\beta)z)}{\sin(\gamma z)}-\frac{\cos(\pi z)\cos((\alpha-\beta)z)}{\sin(\pi z)}.
\end{align*}
The singularities of that function are simple poles and the set of all poles is given by $ \mathbb{Z} \cup \big\{ \, k\frac{\pi}{\gamma} \mid k\in\mathbb{Z} \,\big\}$. Let $(p_{\ell})_{\ell=1}^{\infty}$ be the sequence of all positive poles such that $0<p_1<p_2<p_3<...,$ and thus
\begin{align*}
I_2(x)=&\sum\limits_{\ell=1}^{\infty} \text{Res}\left( g;p_{\ell} \right) e^{-i\pi p_{\ell}}P_{\sqrt{s}-\frac{1}{2}}^{-p_{\ell}}(\cosh(a))Q_{\sqrt{s}-\frac{1}{2}}^{p_{\ell}}(\cosh(b)) \\
&+ \frac{1}{2}\left( \frac{1}{\gamma} - \frac{1}{\pi} \right)P_{\sqrt{s}-\frac{1}{2}}(\cosh(a))Q_{\sqrt{s}-\frac{1}{2}}(\cosh(b)).
\end{align*}
As above, one can show that the series on the right-hand side converges to $0$ as $a\searrow 0$, uniformly for all $\alpha\in (0,\gamma)$. Just note that the set of all residue $\{ \text{Res}\left( g;p_{\ell} \right)  \}_{\ell\in\mathbb{N}}$ is bounded. Hence, we obtain together with \eqref{equation:BehaviourSingularityOneLegendreP}:
\begin{align*}
\lim\limits_{x\rightarrow P}I_2(x) = \frac{1}{2}\left( \frac{1}{\gamma} - \frac{1}{\pi} \right)Q_{\sqrt{s}-\frac{1}{2}}(\cosh(b)).
\end{align*}

In particular, we have shown that
\begin{align*}
\lim\limits_{x\rightarrow P} H^{\nicefrac{1}{4}}(x,y;s) = \lim\limits_{x\rightarrow P} ( I_1(x) - I_2(x) ) = \frac{1}{2\pi}Q_{\sqrt{s}-\frac{1}{2}}(\cosh(b))=:H^{\nicefrac{1}{4}}(P,y;s).
\end{align*}
Note that we also have $\frac{1}{2\pi}Q_{\sqrt{s}-\frac{1}{2}}(\cosh(b))=G_{\mathbb{H}^2}^{\nicefrac{1}{4}}(P,y;s)$, which follows directly from \eqref{equation:GreenPlane1} because of $d(P,y)=b$.


It remains to show $0 < H^{\nicefrac{1}{4}}(x,y;s)$ for all $x\in W$ and $H^{\nicefrac{1}{4}}(x,y;s)\leq G_{\mathbb{H}^2}^{\nicefrac{1}{4}}(x,y;s)$ for all $x\in W\backslash\{y\}$. To prove those estimates, we will use the following properties of $H^{\nicefrac{1}{4}}$ and $G_{\mathbb{H}^2}^{\nicefrac{1}{4}}$, respectively. First, for all sequences $(x_n)_{n\in\mathbb{N}}$ such that $x_n\in W\backslash\{ y \}$ and $\lim_{n\rightarrow \infty}d(x_n,P)=\infty$: $\lim_{n\rightarrow\infty}G_{\mathbb{H}^2}^{\nicefrac{1}{4}}(x_n,y;s)=0$ as well as $\lim_{n\rightarrow\infty}H^{\nicefrac{1}{4}}(x_n,y;s)=0$. The former limit is easy to check since $G_{\mathbb{H}^2}^{\nicefrac{1}{4}}(x_n,y;s)=\frac{1}{2\pi}Q_{\sqrt{s}-\frac{1}{2}}(\cosh(d(x_n,y)))$ by \eqref{equation:GreenPlane1}, $\lim_{n\rightarrow \infty}d(x_n,y)=\infty$ by our assumptions on $(x_n)_{n\in\mathbb{N}}$, and $Q_{\sqrt{s}-\frac{1}{2}}(z_n)$ converges to $0$ for all sequences $(z_n)_{n\in\mathbb{N}}$ with $z_n\in (1,\infty)$ and $\lim_{n\rightarrow\infty} z_n =\infty$ (see \cite[14.8.15]{NIST}). For the other limit note that, for all sequences $x_n=(a_n,\alpha_n)\in W$ as above, we have by \eqref{equation:EstimateProductLegendreFunctions} the estimate $\vert H^{\nicefrac{1}{4}}(x_n,y;s)\vert\leq \frac{\hat{D}}{\sqrt{a_n-1}}$ for some constant $\hat{D}>0$. Since $\lim_{n\rightarrow \infty}d(x_n,P)=\infty$, we have $\lim_{n\rightarrow\infty}a_n = \infty$ and thus $\lim_{n\rightarrow\infty}H^{\nicefrac{1}{4}}(x_n,y;s)=0$. Second, note that the heat kernel $K_{\mathbb{H}^2}$ is strictly positive, which can be seen, for example, from \eqref{equation:HyperHeat2}. Consequently $G_{\mathbb{H}^2}^{\nicefrac{1}{4}}(x,y;s)>0$ for all $x\in W\backslash\{ y\}$.

To prove positivity of $H^{\nicefrac{1}{4}}$, let us assume that there exists some $x_{0}\in W$ with $H^{\nicefrac{1}{4}}(x_{0},y;s) < 0$. We choose some radius $R>0$ such that $d(x_0,P) < R$ and $H^{\nicefrac{1}{4}}(x_{0},y;s) < H^{\nicefrac{1}{4}}(x,y;s)$ for all $x \in W$ with $d(x,P) \geq R$. The latter condition can be achieved due to $\lim_{d(x,P)\rightarrow\infty}H^{\nicefrac{1}{4}}(x,y;s) = 0$ as shown above. Now, with $U:=B_R(P)\cap W$, we have $H^{\nicefrac{1}{4}}(\cdot,y;s)_{\vert U}\in C(\overline{U})\cap C^{\infty}(U)$ (which means, by definition, the function belongs to $C^{\infty}(U)$ and can be extended continuously to the closure $\overline{U}$). Furthermore, by our assumptions on $R$, we have $H^{\nicefrac{1}{4}}(x_0,y;s)< H^{\nicefrac{1}{4}}(x,y;s)$ for all $x\in \partial U$ (recall that $H^{\nicefrac{1}{4}}(x,y;s)=G_{\mathbb{H}^2}^{\nicefrac{1}{4}}(x,y;s)>0$ for all $x\in \partial W$). This is a contradiction to the elliptic minimum principle (see \cite[Corollary 8.16]{Grigoryan} and recall that $H^{\nicefrac{1}{4}}(\cdot,y;s)$ satisfies $(sH^{\nicefrac{1}{4}}(\cdot,y;s)+\Delta^{\nicefrac{1}{4}} H^{\nicefrac{1}{4}}(\cdot,y;s))(x)=0$ for all $x\in U$). Thus, we conclude $H^{\nicefrac{1}{4}}(x,y;s)\geq 0$ for all $x\in W$. Moreover, by the \emph{strong} elliptic minimum principle (see \cite[Corollary 8.14]{Grigoryan}), it follows that $H^{\nicefrac{1}{4}}(x,y;s)> 0$ for all $x\in W$.

The second estimate is shown similarly. Note that $u:W\backslash\{y\}\ni x\mapsto G_{\mathbb{H}^2}^{\nicefrac{1}{4}}(x,y;s)-H^{\nicefrac{1}{4}}(x,y;s) \in \mathbb{R}$ is smooth and can be extended continuously to $\overline{W}\backslash\{y\}$ by $u(x):=0$ for all $x\in\partial W$. Moreover, $\lim_{d(x,P)\rightarrow\infty}u(x)=0$ (as shown above) and $\lim_{x\rightarrow y}u(x)=\infty$. The latter limit holds because $H^{\frac{1}{4}}(\cdot,y;s)$ is a bounded function on $W$ and $\lim_{x\rightarrow y} G_{\mathbb{H}^2}^{\nicefrac{1}{4}}(x,y;s) = \lim_{x\rightarrow y}\frac{1}{2\pi}Q_{\sqrt{s}-\frac{1}{2}}(\cosh(d(x,y)))=\infty$, where the last equality follows from \cite[formula (12.23)]{Olver}. Now, suppose there exists some $x_{0}\in W\backslash\{ y \}$ such that $u(x_0)<0$. We choose radii $R, \tau>0$ so that the following conditions are satisfied: $\overline{B_{\tau}(y)}\subset W$, $u(x)>0$ for all $x\in \overline{B_{\tau}(y)}\backslash\{y \}$, $u(x_0)<u(x)$ for all $x\in W$ with $d(x,P)\geq R$, and $x_0 \in \left(W\cap B_R(P) \right) \backslash \overline{B_{\tau}(y)}=:V$. Obviously, $V$ is a relatively compact open set and $u\in C(\overline{V})\cap C^{\infty}(V)$ satisfies $(s+\Delta^{\nicefrac{1}{4}})u(x) = 0$ for all $x\in V$. Moreover $x_0\in V$ is some interior point with the property $u(x_0)<u(x)$ for all $x\in\partial V$, which is a contradiction to the elliptic minimum principle  as above (see \cite[Corollary 8.16]{Grigoryan}). Thus, we conclude $u \geq 0$, or equivalently, $G_{\mathbb{H}^2}^{\nicefrac{1}{4}}(x,y;s) \geq H^{\nicefrac{1}{4}}(x,y;s)$ for all $x\in W\backslash\{ y\}$.
\end{proof}

\begin{theorem}
\label{theorem:GreenWedge}
For all $(x,y)\in\offdiag(W)$ and $s\in\mathbb{C}$ with $\Re(s)>\frac{1}{4}$:
\begin{align}
\label{equation:GreenWedge}
\begin{split}
G_W^{\nicefrac{1}{4}}(x,y;s)&=\frac{1}{\pi^2}\int\limits_{0}^{\infty} Q_{\sqrt{s}-\frac{1}{2}}^{-i\rho}(\cosh(a))Q_{\sqrt{s}-\frac{1}{2}}^{i\rho}(\cosh(b))\cdot \bigg( \cosh(\rho(\pi-\vert \alpha-\beta \vert))-    \\
& \frac{\sinh(\pi\rho)}{\sinh(\gamma\rho)}\cosh(\rho\left( \gamma-\alpha-\beta \right)) + \frac{\sinh(\rho(\pi-\gamma))}{\sinh(\gamma\rho)}\cosh((\alpha-\beta)\rho)\bigg) d\rho,
\end{split}
\end{align}
where $x=(a,\alpha)$ and $y=(b,\beta)$ \emph{(}with respect to the polar coordinates chosen above\emph{)}.
\end{theorem}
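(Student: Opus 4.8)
The plan is to recognize that the right-hand side of \eqref{equation:GreenWedge} is nothing but the difference $G_{\mathbb{H}^2}^{\nicefrac{1}{4}}(x,y;s) - H^{\nicefrac{1}{4}}(x,y;s)$. Indeed, inserting the integral representation \eqref{equation:GreenPlane2} for $G_{\mathbb{H}^2}^{\nicefrac{1}{4}}$ with $\theta=\vert\alpha-\beta\vert$ together with the definition \eqref{equation:SolutionH} of $H^{\nicefrac{1}{4}}$, and combining the two integrands (note $\sinh((\pi-\gamma)\rho)=\sinh(\rho(\pi-\gamma))$), reproduces exactly the bracket in \eqref{equation:GreenWedge}. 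Hence, writing $u(x):=G_{\mathbb{H}^2}^{\nicefrac{1}{4}}(x,y;s)-H^{\nicefrac{1}{4}}(x,y;s)$, the theorem reduces to the identity $G_W^{\nicefrac{1}{4}}(x,y;s)=u(x)$ for all $(x,y)\in\offdiag(W)$ and $\Re(s)>\frac14$. Since $H^{\nicefrac{1}{4}}(x,y;\cdot)$ is holomorphic on $\mathbb{C}\backslash(-\infty,0]$ by Lemma \ref{lemma:PropertiesOfH} $(i)$, and since $G_{\mathbb{H}^2}^{\nicefrac{1}{4}}$ and $G_W^{\nicefrac{1}{4}}$ are holomorphic in $s$ on $\mathcal{H}_{>\frac14}$ (being Laplace transforms of the respective shifted heat kernels), the identity theorem lets me reduce to proving the equality for real $s>\frac14$, where everything is real-valued and the maximum principle applies.

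Fix $y\in W$ and $s>\frac14$. First I would verify that $u$ is a non-negative fundamental solution of $s+\Delta^{\nicefrac{1}{4}}$ on $W$, i.e. that $\int_W u(x)(s+\Delta^{\nicefrac{1}{4}})f(x)\,dx=f(y)$ for every $f\in C_c^\infty(W)$. Splitting the integral, the $G_{\mathbb{H}^2}^{\nicefrac{1}{4}}$-part equals $f(y)$: extending $f$ by zero makes it an element of $C_c^\infty(\mathbb{H}^2)$, so this is exactly the defining property of Proposition \ref{proposition:PDEGreen} applied to the whole plane $\Omega=\mathbb{H}^2$ (with the shifted Laplacian). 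The $H^{\nicefrac{1}{4}}$-part vanishes: since $H^{\nicefrac{1}{4}}(\cdot,y;s)$ is smooth on $W$ with $(s+\Delta^{\nicefrac{1}{4}})H^{\nicefrac{1}{4}}(\cdot,y;s)\equiv 0$ by Lemma \ref{lemma:PropertiesOfH} $(ii)$, Green's formula (with no boundary contribution because $f$ has compact support in $W$) gives $\int_W H^{\nicefrac{1}{4}}(s+\Delta^{\nicefrac{1}{4}})f\,dx=\int_W\big((s+\Delta^{\nicefrac{1}{4}})H^{\nicefrac{1}{4}}\big)f\,dx=0$. From Lemma \ref{lemma:PropertiesOfH} $(iii)$ I also record that $u$ is non-negative on $W$, extends continuously to $\overline{W}$, and vanishes on all of $\partial W$: away from the vertex this is the boundary identity $H^{\nicefrac{1}{4}}=G_{\mathbb{H}^2}^{\nicefrac{1}{4}}$, and at the vertex it follows from \eqref{equation:WertSpitzenpunkt} together with $G_{\mathbb{H}^2}^{\nicefrac{1}{4}}(P,y;s)=\frac{1}{2\pi}Q_{\sqrt{s}-\frac12}(\cosh(b))$. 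Finally $u\to 0$ as $d(x,P)\to\infty$, exactly as shown inside the proof of Lemma \ref{lemma:PropertiesOfH}.

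With these properties in hand, I would identify $u$ with $G_W^{\nicefrac{1}{4}}$ by a uniqueness argument. Set $w:=u-G_W^{\nicefrac{1}{4}}(\cdot,y;s)$. Subtracting the two fundamental-solution identities (the one just proved for $u$ and the one for $G_W^{\nicefrac{1}{4}}$ from Proposition \ref{proposition:PDEGreen}) yields $\int_W w(s+\Delta^{\nicefrac{1}{4}})f\,dx=0$ for all $f\in C_c^\infty(W)$; hence $w$ is a weak, and therefore (by elliptic regularity, as used in Lemma \ref{lemma:PropertiesOfH}) a smooth, solution of $(s+\Delta^{\nicefrac{1}{4}})w=0$ on all of $W$. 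Note that the singularities at $y$ cancel automatically at the level of the weak formulation, so no local singularity analysis is needed. Moreover $w$ extends continuously to $\overline{W}$ with $w=0$ on $\partial W$ (both $u$ and $G_W^{\nicefrac{1}{4}}$ vanish there) and $w\to 0$ at infinity. Exactly the elliptic minimum/maximum principle invoked in Lemma \ref{lemma:PropertiesOfH} then forces $w\equiv 0$, which is the desired equality $G_W^{\nicefrac{1}{4}}=u$ and hence \eqref{equation:GreenWedge}.

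The main obstacle is the boundary behaviour of $G_W^{\nicefrac{1}{4}}$ needed in the last step: one must know that the Dirichlet Green's function attains the boundary value $0$ continuously on all of $\partial W$, including the vertex $P$. Away from $P$ the boundary consists of two geodesic rays and is smooth, so the vanishing is standard; at the corner $P$ one has to argue separately, using that in dimension two a wedge vertex is a regular boundary point (so that a barrier exists and $G_W^{\nicefrac{1}{4}}(\cdot,y;s)\to 0$ there). A secondary technical point is to run the maximum principle on the unbounded domain $W$: I would apply it on the relatively compact sets $W\cap B_R(P)$ and let $R\to\infty$, controlling the contribution of $W\cap\partial B_R(P)$ by the decay $w\to 0$ at infinity, precisely as in the positivity argument of Lemma \ref{lemma:PropertiesOfH}. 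Alternatively, one inequality comes for free from minimality of the heat-kernel Green's function, giving $G_W^{\nicefrac{1}{4}}\le u$, after which only the reverse estimate must be extracted from the maximum principle.
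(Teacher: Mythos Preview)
Your proposal is correct and close in spirit to the paper's proof, but the final uniqueness step is organized differently. The paper does \emph{not} compare $u=G_{\mathbb{H}^2}^{\nicefrac{1}{4}}-H^{\nicefrac{1}{4}}$ with $G_W^{\nicefrac{1}{4}}$ pointwise. Instead it integrates both against a non-negative test function $f\in C_c^\infty(W)$, producing smooth functions $\tilde u_W(x)=\int_W u(x,y;s)f(y)\,dy$ and $u_W(x)=\int_W G_W^{\nicefrac{1}{4}}(x,y;s)f(y)\,dy$, and then invokes a black-box characterization of the Dirichlet resolvent (Grigor'yan, Exercise~8.2): $u_W$ is the unique non-negative solution of $(s+\Delta^{\nicefrac{1}{4}})v=f$ that vanishes at $\partial W$ and at infinity. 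All required properties of $\tilde u_W$ (smoothness, non-negativity, boundary vanishing, decay) follow from Lemma~\ref{lemma:PropertiesOfH} and dominated convergence, so $\tilde u_W=u_W$; varying $f$ then gives the pointwise identity.

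The practical difference is exactly the obstacle you flag: your route needs continuous vanishing of $G_W^{\nicefrac{1}{4}}(\cdot,y;s)$ on all of $\partial W$, in particular at the vertex $P$, before the maximum principle can be applied to $w=u-G_W^{\nicefrac{1}{4}}$. That is true (a two-dimensional wedge vertex is a regular boundary point, so a barrier exists), but it is an extra input the paper deliberately sidesteps by smoothing with $f$ and appealing to the resolvent characterization, which only asks for boundary behaviour of the \emph{candidate} $\tilde u_W$ --- and that comes for free from Lemma~\ref{lemma:PropertiesOfH}. Your approach is thus a bit more self-contained (no external Exercise~8.2) at the cost of importing the corner-regularity fact; the paper's is cleaner at the boundary at the cost of a cited uniqueness criterion.
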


\begin{proof}
For brevity, we denote the right-hand side of \eqref{equation:GreenWedge} by $\tilde{G}_W^{\nicefrac{1}{4}}(x,y;s)$. More precisely,
\begin{align*}
\tilde{G}_W^{\nicefrac{1}{4}}:&\offdiag(W) \times \mathcal{H}_{>\frac{1}{4}} \rightarrow\mathbb{C},\\
 &((x,y),s)\mapsto \tilde{G}_W^{\nicefrac{1}{4}}(x,y;s):=G_{\mathbb{H}^2}^{\nicefrac{1}{4}}(x,y;s) - H^{\nicefrac{1}{4}}(x,y;s),
\end{align*} 
where $H^{\nicefrac{1}{4}}$ is defined as in \eqref{equation:SolutionH}. Recall the following facts: For all $x,y\in W$ with $x\neq y$ the functions $G_{\mathbb{H}^2}^{\nicefrac{1}{4}}(x,y;\cdot)$ and $G_W^{\nicefrac{1}{4}}(x,y;\cdot)$ are holomorphic on $\mathcal{H}_{>\frac{1}{4}}$. Similary, for all $x,y\in W$ the function $H^{\nicefrac{1}{4}}(x,y;\cdot)$ is also holomorphic on $\mathcal{H}_{>\frac{1}{4}}$ which follows from Lemma \ref{lemma:PropertiesOfH} $(i)$. Hence, it suffices to prove the equality $G_W^{\nicefrac{1}{4}}(x,y;s) = \tilde{G}_W^{\nicefrac{1}{4}}(x,y;s)$ for all $s\in(\frac{1}{4},\infty)$ and $x,y\in W$ with $x\neq y$.

Let $s>\frac{1}{4}$ be fixed and let $f\in C_{c}^{\infty}(W)$ be given such that $f(x)\geq 0$ for all $x\in W$. We extend that function to $\mathbb{H}^2$ by $f(y):=0$ for all $y\in\mathbb{H}^2\backslash W$ and we denote the extended function also by $f$, so that it belongs to $C_{c}^{\infty}(\mathbb{H}^2)$. We define
\begin{align*}
&u_W:W\ni x\mapsto \int\limits_{W}G_W^{\nicefrac{1}{4}}(x,y;s)f(y)\, dy\in\mathbb{R} \\
\text{ and }\,\, &u_{\mathbb{H}^2}:\mathbb{H}^2\ni x\mapsto \int\limits_{\mathbb{H}^2}G_{\mathbb{H}^2}^{\nicefrac{1}{4}}(x,y;s)f(y) \, dy\in\mathbb{R},
\end{align*}
which are known to be smooth functions (see \cite[Theorem $8.7 (ii)$]{Grigoryan}). Moreover, one can show that $(s+\Delta^{\nicefrac{1}{4}})u_W(x)=f(x)$ for all $x\in W$, respectively $(s+\Delta^{\nicefrac{1}{4}})u_{\mathbb{H}^2}(x)=f(x)$ for all $x\in \mathbb{H}^2$ (see \cite[Theorem $8.4 (b)$]{Grigoryan}). Similarly, we define
\begin{align*}
\Phi:W\ni x\mapsto \int\limits_{W}H^{\nicefrac{1}{4}}(x,y;s)f(y) \, dy\in\mathbb{R}.
\end{align*}

Note that $\Phi$ is smooth and satisfies $(s+\Delta^{\nicefrac{1}{4}})\Phi(x)=0$ for all $x\in W$, which follows e.g. from Lemma \ref{lemma:PropertiesOfH} $(ii)$, \cite[Satz $5.7$ Zusatz on p. 148]{Elstrodt} and the fact that $f$ is compactly supported in $W$ (by assumption).

Next, we define
\begin{align*}
&\tilde{u}_W:W\ni x\mapsto \int\limits_{W}\tilde{G}_W^{\nicefrac{1}{4}}(x,y;s)f(y)\, dy\in\mathbb{R}
\end{align*}
and we will show $u_W(x)=\tilde{u}_W(x)$ for all $x\in W$ by applying the result in \cite[Exercise 8.2]{Grigoryan} (where the notation given in \cite[Exercise 8.2]{Grigoryan} corresponds to our notation as follows: $\alpha:=s-\frac{1}{4}$, $R_{\alpha}f:=u_W$ and $u:=\tilde{u}_W$). Let us show that the conditions of \cite[Exercise 8.2]{Grigoryan} are indeed satisfied by $\tilde{u}_W$:

First, note that $\tilde{u}_W = u_{\mathbb{H}^2}-\Phi$ and thus, by the discussion above, $\tilde{u}_W$ is smooth and satisfies $(s+\Delta^{\nicefrac{1}{4}})\tilde{u}_W(x)=f(x)$ for all $x\in W$. Moreover, $\tilde{u}_W$ is non-negative, since for all $x\in W$
\begin{align*}
\tilde{u}_W(x)=\int\limits_{W}\tilde{G}_W^{\nicefrac{1}{4}}(x,y;s)f(y) dy = \int\limits_{W}(G_{\mathbb{H}^2}^{\nicefrac{1}{4}}(x,y;s) - H^{\nicefrac{1}{4}}(x,y;s)) \cdot f(y) dy,
\end{align*}
where $f(y)\geq 0$ for all $y\in W$ (by assumption), and $\left(G_{\mathbb{H}^2}^{\nicefrac{1}{4}} - H^{\nicefrac{1}{4}}\right)(x,y;s)\geq 0$ for all $(x,y)\in\offdiag(W)$ by Lemma \ref{lemma:PropertiesOfH} $(iii)$. 
Lastly, we need to check whether $\lim_{n\rightarrow\infty}\tilde{u}_W(x_n)=0$ for all sequences $(x_n)_{n\in\mathbb{N}}\subset W$ such that either there exists some $x_{\ast}\in \partial W$ with $\lim_{n\rightarrow \infty}x_n=x_{\ast}$ or $\lim_{n\rightarrow\infty}d(P,x_n)=\infty$. That property follows immediately from Lebesgue's dominated convergence theorem and the following facts:

Recall that for all $y\in W$ the function $\tilde{G}_W^{\nicefrac{1}{4}}(\cdot,y;s):W\backslash\{ y \} \rightarrow \mathbb{R}$ can be extended continuously to $\overline{W}\backslash\{ y \}$ by $\tilde{G}_W^{\nicefrac{1}{4}}(x,y;s):=0$ for all $x\in\partial W$ (see Lemma \ref{lemma:PropertiesOfH} $(iii)$). Further, we have shown in the proof of Lemma \ref{lemma:PropertiesOfH} $(iii)$ that $\lim_{d(x,P)\rightarrow\infty}\tilde{G}_W^{\nicefrac{1}{4}}(x,y;t)=0$ for all $y\in W$. Lastly, for all $\varepsilon >0$ there exists some constant $D>0$ such that for all $x,y\in W$ with $d(x,y)\geq \varepsilon$:
\begin{align*}
0\leq \tilde{G}_W^{\nicefrac{1}{4}}(x,y;s)\leq G_{\mathbb{H}^2}^{\nicefrac{1}{4}}(x,y;s) \leq D.
\end{align*}
The first estimate follows from Lemma \ref{lemma:PropertiesOfH} $(iii)$ and for the second estimate note that there exists some constant $C>0$ such that
\begin{align}
\label{equation:EstimateHeatKernelPlane}
K_{\mathbb{H}^2} (x,y;t) \leq \frac{C}{t} \cdot e^{-\frac{d(x,y)^2}{8t}},\quad \forall x,y\in\mathbb{H}^2,\, t>0
\end{align}
(see e.g. \cite[Lemma 7.4.26]{Buser}).
Thus, it follows for all $x,y\in \mathbb{H}^2$ with $d(x,y)\geq \epsilon$:
\begin{align*}
G_{\mathbb{H}^2}^{\nicefrac{1}{4}}(x,y;s) &= \int\limits_{0}^{\infty} e^{(\frac{1}{4}-s)t} K_{\mathbb{H}^2} (x,y;t) dt \leq \int\limits_{0}^{\infty} e^{(\frac{1}{4}-s)t} \frac{C}{t} \cdot e^{-\frac{\varepsilon^2}{8t}} dt=:D.
\end{align*}

Thus, by Lebesgue's dominated convergence theorem we have $\lim_{n\rightarrow\infty}\tilde{u}_W(x_n)=0$ for any sequence as above. Using \cite[Exercise 8.2]{Grigoryan} we have $u_W(x)=\tilde{u}_W(x)$ for all $x\in W$. 

Since $f\in C_{c}^{\infty}(W)$ with $f\geq 0$ was arbitrary, we conclude that for all $x\in W$: $G_W^{\nicefrac{1}{4}}(x,y;s)=G_{\mathbb{H}^2}^{\nicefrac{1}{4}}(x,y;s)-H^{\nicefrac{1}{4}}(x,y;s)$ for almost all $y\in W$. Further, for any $x\in W$, both sides are continuous in $y\in W\backslash\{x\}$ and thus they must be equal for all $y\in W\backslash\{x\}$.

\end{proof}

By definition, the (shifted) Green's function is defined as the Laplace transform of the (shifted) heat kernel. Thus we obtain a formula for the (shifted) heat kernel from Theorem \ref{theorem:GreenWedge}.

\begin{corollary}
\label{corollary:FormelHeatKernelWedgeShift}
For all $t>0$ and $x,y \in W$
\begin{align}
\label{equation:FormelHeatKernelWedgeShift}
&K_W^{\nicefrac{1}{4}}(x,y;t) = K_{\mathbb{H}^2}^{\nicefrac{1}{4}}(x,y;t) \, - \mathcal{L}^{-1}\left\lbrace s\mapsto H^{\nicefrac{1}{4}}(x,y;s)\right\rbrace (t),
\end{align}
where $H^{\nicefrac{1}{4}}$ is the same function as in Lemma \emph{\ref{lemma:PropertiesOfH}}.
 
Moreover, for all $x\in W$ the function $(0,\infty)\ni t\mapsto  K_{\mathbb{H}^2}^{\nicefrac{1}{4}}(x,x;t) - K_W^{\nicefrac{1}{4}}(x,x;t) \in\mathbb{R}$ is non-negative, can be extended continuously at $t=0$, its Laplace integral is \emph{(}absolutely\emph{)} convergent and equal to $H^{\nicefrac{1}{4}}(x,x;s)$ for all $s\in\mathcal{H}_{>\frac{1}{4}}$. 
\end{corollary}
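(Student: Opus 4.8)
The plan is to obtain the first formula \eqref{equation:FormelHeatKernelWedgeShift} by inverting the Laplace transform in Theorem \ref{theorem:GreenWedge}, and to establish the diagonal assertions from a single uniform estimate controlled by the parabolic maximum principle and the Gaussian bound \eqref{equation:EstimateHeatKernelPlane}. Throughout write
\[
h^{\nicefrac{1}{4}}(x,y;t):=K_{\mathbb{H}^2}^{\nicefrac{1}{4}}(x,y;t)-K_W^{\nicefrac{1}{4}}(x,y;t),\qquad x,y\in W,\ t>0,
\]
so that every claim concerns the Laplace transform of $h^{\nicefrac{1}{4}}$. For $x\neq y$, Theorem \ref{theorem:GreenWedge} together with \eqref{equation:AnsatzGreenShift} gives $H^{\nicefrac{1}{4}}(x,y;s)=G_{\mathbb{H}^2}^{\nicefrac{1}{4}}(x,y;s)-G_W^{\nicefrac{1}{4}}(x,y;s)$ for $\Re(s)>\tfrac14$, and by Definition \ref{definition:Greens} and \eqref{equation:ShiftedGreen} the right-hand side equals $\mathcal{L}\{h^{\nicefrac{1}{4}}(x,y;\cdot)\}(s)$. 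Since $t\mapsto h^{\nicefrac{1}{4}}(x,y;t)$ is continuous, uniqueness of the Laplace transform yields $\mathcal{L}^{-1}\{H^{\nicefrac{1}{4}}(x,y;\cdot)\}=h^{\nicefrac{1}{4}}(x,y;\cdot)$, which is exactly \eqref{equation:FormelHeatKernelWedgeShift} for $x\neq y$.

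The diagonal case rests on the following uniform bound. Fix $x_0\in W$ and put $r:=d(x_0,\partial W)>0$. For $y\in B_{r/2}(x_0)$ the function $W\ni x\mapsto K_{\mathbb{H}^2}(x,y;t)-K_W(x,y;t)$ solves the heat equation in $W$ with vanishing initial data and boundary data $K_{\mathbb{H}^2}(\cdot,y;t)\geq 0$ on $\partial W$, because $K_W$ satisfies Dirichlet conditions. The minimum principle forces this difference to be non-negative — this is domain monotonicity $K_W\le K_{\mathbb{H}^2}$ — while the maximum principle combined with \eqref{equation:EstimateHeatKernelPlane} gives
\[
0\le K_{\mathbb{H}^2}(x,y;t)-K_W(x,y;t)\le \sup_{z\in\partial W,\,0\le\tau\le t}K_{\mathbb{H}^2}(z,y;\tau)\le \sup_{0<\tau\le t}\frac{C}{\tau}\,e^{-\frac{(r/2)^2}{8\tau}}=:M(t).
\]
Here $M(t)$ is bounded by a constant $M<\infty$ independent of $y\in B_{r/2}(x_0)$ and $t>0$, and $M(t)\to 0$ as $t\to 0^+$. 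Taking $y=x_0$ shows that $h^{\nicefrac{1}{4}}(x_0,x_0;t)=e^{t/4}\bigl(K_{\mathbb{H}^2}-K_W\bigr)(x_0,x_0;t)$ is non-negative, tends to $0$ as $t\to 0^+$ and hence extends continuously to $t=0$, and obeys $0\le h^{\nicefrac{1}{4}}(x_0,x_0;t)\le M e^{t/4}$; the last bound makes $e^{-st}h^{\nicefrac{1}{4}}(x_0,x_0;t)$ dominated by $Me^{-(\Re(s)-\frac14)t}$, so its Laplace integral is absolutely convergent for $\Re(s)>\tfrac14$.

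To identify this Laplace transform, I would choose $y_n\to x_0$ with $y_n\neq x_0$. For each $n$ the off-diagonal identity gives $\mathcal{L}\{h^{\nicefrac{1}{4}}(x_0,y_n;\cdot)\}(s)=H^{\nicefrac{1}{4}}(x_0,y_n;s)$. By joint continuity of the heat kernels the integrands $e^{-st}h^{\nicefrac{1}{4}}(x_0,y_n;t)$ converge pointwise to $e^{-st}h^{\nicefrac{1}{4}}(x_0,x_0;t)$, and for $n$ large (so that $y_n\in B_{r/2}(x_0)$) they are dominated by $Me^{-(\Re(s)-\frac14)t}$ by the estimate above; dominated convergence identifies the limit of the left-hand sides with $\mathcal{L}\{h^{\nicefrac{1}{4}}(x_0,x_0;\cdot)\}(s)$. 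On the right-hand side $H^{\nicefrac{1}{4}}(x_0,y_n;s)\to H^{\nicefrac{1}{4}}(x_0,x_0;s)$ by continuity of $H^{\nicefrac{1}{4}}(\cdot,\cdot;s)$ on $W\times W$ from Lemma \ref{lemma:PropertiesOfH}$(ii)$. Hence $\mathcal{L}\{h^{\nicefrac{1}{4}}(x_0,x_0;\cdot)\}(s)=H^{\nicefrac{1}{4}}(x_0,x_0;s)$ for $\Re(s)>\tfrac14$, which is the second assertion of the corollary; uniqueness of the Laplace transform (both sides continuous on $[0,\infty)$) then upgrades it to \eqref{equation:FormelHeatKernelWedgeShift} at $x=y$.

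The step I expect to be the main obstacle is making the parabolic minimum/maximum principle rigorous on the non-compact wedge: one must verify that $x\mapsto K_{\mathbb{H}^2}(x,y;t)-K_W(x,y;t)$ is a bounded solution, continuous up to the parabolic boundary and vanishing as $d(x,P)\to\infty$, before the boundary-maximum estimate may be applied. The Gaussian bound \eqref{equation:EstimateHeatKernelPlane} is precisely what supplies this decay at infinity for both kernels, and hence for their difference; once it is in place, the remaining arguments are routine applications of dominated convergence and of the uniqueness of the Laplace transform.
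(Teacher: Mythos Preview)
Your off-diagonal argument and the overall strategy for the diagonal case (pass to the limit along $y_n\to x_0$, invoke continuity of $H^{\nicefrac14}$ from Lemma~\ref{lemma:PropertiesOfH}$(ii)$, and use dominated convergence) coincide with the paper's proof. The only substantive difference is how the dominating function is produced. The paper does not use a parabolic maximum principle at all: it simply cites that $(y,t)\mapsto K_{\mathbb{H}^2}^{\nicefrac14}(x,y;t)-K_W^{\nicefrac14}(x,y;t)$ is continuous on $W\times[0,\infty)$ (via \cite[Corollary 9.21 and Exercise 9.7]{Grigoryan}), which immediately gives boundedness on any compact $K\times[0,1]$; for $t\geq 1$ it just uses $0\le u\le K_{\mathbb{H}^2}^{\nicefrac14}$ together with the Gaussian bound \eqref{equation:EstimateHeatKernelPlane}. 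Splicing these two pieces yields $|e^{-st}u(y_n,t)|\le C\,e^{(\frac14-\Re(s))t}$, which is the dominant.

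Your route via the boundary-maximum estimate gives a more explicit bound $M(t)$ depending only on $d(x_0,\partial W)$, which is conceptually nice and even shows $h^{\nicefrac14}(x_0,x_0;t)\to 0$ explicitly as $t\to 0^+$. The cost, as you correctly flag, is that the parabolic maximum principle on the non-compact wedge needs a justification (boundedness of the difference and its vanishing at spatial infinity, which you propose to read off \eqref{equation:EstimateHeatKernelPlane}). The paper's approach sidesteps this issue by quoting a continuity result, at the price of a slightly less transparent dominant. Both arguments are correct; yours is more self-contained once the maximum principle is made rigorous, while the paper's is shorter because it leans on \cite{Grigoryan}.
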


\begin{proof}
Let $x\in W$ be arbitrary. By definition, for all $y\in W$ with $y\neq x$ and for all $s\in\mathcal{H}_{>\frac{1}{4}}$:
\begin{align*}
G_{W}^{\nicefrac{1}{4}}(x,y;s)=\mathcal{L}\lbrace K_{W}^{\nicefrac{1}{4}}(x,y;\cdot) \rbrace(s)\, \text{ and }\, G_{\mathbb{H}^2}^{\nicefrac{1}{4}}(x,y;s) = \mathcal{L}\lbrace K_{\mathbb{H}^2}^{\nicefrac{1}{4}}(x,y;\cdot) \rbrace(s).
\end{align*}
Furthermore, by Theorem \ref{theorem:GreenWedge}, we have $H^{\nicefrac{1}{4}}(x,y;s)=G_{\mathbb{H}^2}^{\nicefrac{1}{4}}(x,y;s)-G_{W}^{\nicefrac{1}{4}}(x,y;s)$ for all $y\in W$ with $y\neq x$ and $s\in\mathcal{H}_{>\frac{1}{4}}$. Thus, for all $y\in W$ with $y\neq x$ and $t>0$:
\begin{align*}
\mathcal{L}^{-1}\left\lbrace s\mapsto H^{\nicefrac{1}{4}}(x,y;s)\right\rbrace (t) = K_{\mathbb{H}^2}^{\nicefrac{1}{4}}(x,y;t) - K_W^{\nicefrac{1}{4}}(x,y;t).
\end{align*}
We will show that the above equation is also valid for $y=x$. For that purpose we consider $u:W\times [0,\infty)\rightarrow \mathbb{R}$ defined for all $(y,t)\in W\times [0,\infty)$ as
\begin{align*}
u(y,t):=\begin{cases}
K_{\mathbb{H}^2}^{\nicefrac{1}{4}}(x,y;t)-K_{W}^{\nicefrac{1}{4}}(x,y;t),&\text{ if }t>0\\
0,& \text{ if }t=0.
\end{cases}
\end{align*}
For all $y\in W$ and $t>0$ we have $0\leq K_{W}^{\nicefrac{1}{4}} (x,y;t)\leq K_{\mathbb{H}^2}^{\nicefrac{1}{4}}(x,y;t)$, and thus $0\leq u(y,t)\leq K_{\mathbb{H}^2}^{\nicefrac{1}{4}}(x,y;t)$. Moreover, $u$ is continuous (see \cite[Corollary $9.21$ and Exercise $9.7$]{Grigoryan}). Hence, the Laplace transform of $u(y,\cdot)$ exists for all $y\in W$ and $s\in\mathcal{H}_{>\frac{1}{4}}$. Just note that for all $y\in W$ and $s\in\mathcal{H}_{>\frac{1}{4}}$:
\begin{align*}
\int\limits_{0}^{\infty}e^{-st}u(y,t)dt \leq  \int\limits_{0}^{1}e^{-st}u(y,t)dt+\int\limits_{1}^{\infty}e^{-st}K_{\mathbb{H}^2}^{\nicefrac{1}{4}}(x,y;t)dt,
\end{align*}
where the first integral is convergent because of the continuity of $u(y,\cdot)$ on $[0,\infty)$, and the second integral is convergent due to \eqref{equation:EstimateHeatKernelPlane}.

Let $(y_n)_{n\in\mathbb{N}}\subset W\backslash\{x\}$ be a sequence such that $\lim_{n\rightarrow\infty}y_n = x$. Then for all $s\in\mathcal{H}_{>\frac{1}{4}}$:
\begin{align*}
\mathcal{L}\left\lbrace u(x,\cdot)\right\rbrace  (s) &=\int\limits_{0}^{\infty} e^{-st}(K_{\mathbb{H}^2}^{\nicefrac{1}{4}}(x,x;t)-K_{W}^{\nicefrac{1}{4}}(x,x;t)) dt\\
&=\lim\limits_{n\rightarrow\infty}\int\limits_{0}^{\infty}e^{-st}(K_{\mathbb{H}^2}^{\nicefrac{1}{4}}(x,y_n;t)-K_{W}^{\nicefrac{1}{4}}(x,y_n;t)) dt\\
&=\lim\limits_{n\rightarrow\infty} (G_{\mathbb{H}^2}^{\nicefrac{1}{4}}(x,y_n;s)-G_{W}^{\nicefrac{1}{4}}(x,y_n;s)) =\lim\limits_{n\rightarrow\infty} H^{\nicefrac{1}{4}}(x,y_n;s) \\
&=H^{\nicefrac{1}{4}}(x,x;s).
\end{align*}
Thus $u(x,t)= \mathcal{L}^{-1}\{s\mapsto H^{\nicefrac{1}{4}}(x,x;s)\}(t)$ for all $t>0$. Note that we used continuity of $H^{\nicefrac{1}{4}}$ for the last equation (see Lemma \ref{lemma:PropertiesOfH} $(ii)$), and, for the second equation, we used Lebesgue's dominated convergence theorem which is allowed because of the following: First, for all compact $K\subset W$ with $x\in K$, $u_{\vert K\times [0,1]}$ is bounded by continuity of $u$ and also the function $K\times [1,\infty)\ni (y,t)\mapsto e^{-\frac{1}{4}t}u(y,t)\in\mathbb{R}$ is bounded because of $0\leq e^{-\frac{1}{4}t}u(y,t)\leq K_{\mathbb{H}^2}(x,y;t)$ and \eqref{equation:EstimateHeatKernelPlane}. Thus, $K\times [0,\infty)\ni (y,t)\mapsto e^{-\frac{1}{4}t}u(y,t)\in\mathbb{R}$ is bounded as well. In particular, there exists some constant $C>0$ such that for all $t\geq 0$ and $n\in\mathbb{N}$: $\vert e^{-st} u(x,y_n;t) \vert \leq C\cdot e^{(\frac{1}{4}-\Re(s))t}$. That upper bound is integrable over $[0,\infty)$ since, by assumption, $\Re(s)>\frac{1}{4}$.

\end{proof}

    \bibliographystyle{alpha}
        \bibliography{bibliography}

\Addresses

\end{document}